\newcommand{\e}{\varepsilon}
\newcommand{\ii}{\mathrm{i}}
\newcommand{\N}{\mathbb N}
\newcommand{\R}{\mathbb R}
\newcommand{\C}{\mathbb C}
\newcommand{\bb}[1]{\boldsymbol{#1}}
\newcommand{\rd}{{\rm d}}
\newcommand{\m}{\mathrm{Leb}}
\renewcommand{\Re}{\mathrm{Re}\hspace{0.9mm}}
\renewcommand{\Im}{\mathrm{Im}\hspace{0.9mm}}
\renewcommand{\P}{\mathbb{P}}
\newcommand{\E}{\mathbb{E}}
\newcommand{\OO}{\mathcal O}
\newcommand{\oo}{\mathrm{o}}
\newcommand{\leqdef}{\vcentcolon=}
\newtheorem{theorem}{Theorem}[section]
\newtheorem{lemma}[theorem]{Lemma}
\newtheorem{corollary}[theorem]{Corollary}
\newtheorem{proposition}[theorem]{Proposition}
\newtheorem{conjecture}[theorem]{Conjecture}
\newtheorem*{remark}{Remark}
\newtheorem*{notation}{Notation}
\numberwithin{equation}{section}
\begin{document}

\begin{frontmatter}

    \title{Moments of the Riemann zeta function on short intervals of the critical line}
    \runtitle{Moments of the zeta function on short intervals}

    \begin{aug}
        \author[A]{\fnms{Louis-Pierre} \snm{Arguin}\thanksref{t1}\ead[label=e1]{louis-pierre.arguin@baruch.cuny.edu}},
        \author[B]{\fnms{Fr\'ed\'eric} \snm{Ouimet}\thanksref{t2}\ead[label=e2,mark]{ouimetfr@caltech.edu}}\\
        \and
        \author[B]{\fnms{Maksym} \snm{Radziwi\l\l}\thanksref{t3}\ead[label=e3,mark]{maksym@caltech.edu}}%
        \thankstext{t1}{L.-P.\ A.\ is supported in part by NSF Grant DMS-1513441 and by NSF CAREER DMS-1653602.}%
        \thankstext{t2}{F.\ O.\ is supported by postdoctoral fellowship from the NSERC (PDF) and the FRQNT (B3X).}%
        \thankstext{t3}{M.\ R.\ acknowledges support of a Sloan fellowship and NSF grant DMS-1902063.}%
        \address[A]{Baruch College and Graduate Center (CUNY), \printead{e1}%
        }\vspace{-2mm}%
        \address[B]{California Institute of Technology, \printead{e2,e3}%
        }%
        \runauthor{L.-P.\ Arguin, F.\ Ouimet and M.\ Radziwi\l\l}%
    \end{aug}

    \begin{abstract}
        We show that as $T\to \infty$, for all $t\in [T,2T]$ outside of a set of measure $\oo(T)$,
        \begin{equation*}
            \int_{-\log^\theta T}^{\log^\theta T} |\zeta(\tfrac 12 + \ii t + \ii h)|^{\beta} \rd h = (\log T)^{f_{\theta}(\beta) + \oo(1)},
        \end{equation*}
        for some explicit exponent $f_{\theta}(\beta)$, where $\theta > -1$ and $\beta > 0$.
        This proves an extended version of a conjecture of \cite{MR3151088}.
        In particular, it shows that, for all $\theta > -1$, the moments exhibit a phase transition at a critical exponent $\beta_c(\theta)$,
        below which $f_\theta(\beta)$  is quadratic and above which $f_\theta(\beta)$  is linear.
        The form of the exponent $f_\theta$ also differs between mesoscopic intervals ($-1<\theta<0$) and macroscopic intervals ($\theta>0$), a phenomenon that stems from an approximate tree structure for the correlations of zeta.
        We also prove that, for all $t\in [T,2T]$ outside a set of measure $\oo(T)$,
        \begin{equation*}
            \max_{|h| \leq \log^\theta T} |\zeta(\tfrac{1}{2} + \ii t + \ii h)| = (\log T)^{m(\theta) + \oo(1)},
        \end{equation*}
        for some explicit $m(\theta)$.
        This generalizes earlier results of \cite{najnudel2018} and \cite{ABBRS_2019} for $\theta = 0$.
        The proofs are unconditional, except for the upper bounds when $\theta > 3$, where the Riemann hypothesis is assumed.
    \end{abstract}

    \begin{keyword}[class=MSC2020]
        \kwd[Primary ]{60G70}
        \kwd[; Secondary ]{11M06 \sep 60F10 \sep 60G60}
    \end{keyword}

    \begin{keyword}
        \kwd{extreme value theory}
        \kwd{Riemann zeta function}
        \kwd{moments}
    \end{keyword}

\end{frontmatter}

\tableofcontents

\section{Introduction}

\subsection{Maxima and moments over large intervals}

    Understanding the growth of the Riemann zeta function $\zeta(s)$ on the critical line $\Re s = \tfrac 12$ is a central problem in number theory due, among other things, to its relationship with the distribution of the zeros of $\zeta(s)$, see e.g.\ Theorem 9.3 in \cite{MR882550}, and the more general subconvexity problem, see e.g.\ \cite{MR2653249,MR2680486}, and see \cite{MR1826269} for a general discussion.

    The Lindel\"of hypothesis predicts that, for any $\e > 0$ and all $t \in \mathbb{R}$, we have
    $|\zeta(\tfrac 12 + \ii t)| =\OO((1 + |t|)^{\e})$,
    whereas it follows from the Riemann hypothesis that
    \begin{equation}
        |\zeta(\tfrac 12 + \ii t)| = \OO\left(\exp \bigg( \Big ( \frac{\log 2}{2} + \oo(1) \Big ) \frac{\log t}{\log\log t} \bigg)\right), \quad \text{as } t\to\infty,
    \end{equation}
    see \cite{Chandee}.

    Unfortunately, there is a large gap between these conditional results and the best unconditional upper bounds, such as \cite{bourgain}, which shows that $|\zeta(\tfrac 12 + \ii t)| = \OO((1 + |t|)^{13/84 + \e})$ for any given $\e > 0$ and all $t \in \mathbb{R}$. Currently, the best unconditional lower bound,
    \begin{equation}
        \max_{t \in [0, T]} |\zeta(\tfrac 12 + \ii t)| \geq \exp \bigg( (\sqrt{2} + \oo(1)) \sqrt{\frac{\log T \log\log\log T}{\log\log T}} \bigg), \quad \text{as } T\to\infty,
    \end{equation}
    is established in \cite{tenenbaum} building on a method from \cite{Bondarenko}.

    The true order of the maximum of $|\zeta(\tfrac 12 + \ii t)|$ remains elusive to this day.
    A conjecture that we find plausible is stated in \cite{FGH07}, where it is conjectured based on probabilistic models that
    \begin{equation}
        \max_{t\in [0,T]} |\zeta(\tfrac{1}{2} + \ii t)| = \exp\left( \Big ( \frac{1}{\sqrt{2}}+\oo(1) \Big ) \sqrt{\log T\cdot \log\log T}\right), \quad \text{as } T\to\infty.
    \end{equation}

    Another set of central objects in the theory of the Riemann zeta function are the \textit{moments}
    \begin{equation}
        \frac{1}{T} \int_{T}^{2T} |\zeta(\tfrac 12 + \ii t)|^{\beta} \rd t, \quad \beta > 0.
    \end{equation}
    Their importance comes from their relationship to the size and zero-distribution of $\zeta(s)$. However, unlike the problem of understanding the size of the global maximum of $|\zeta(\tfrac 12 + \ii t)|$, we are in possession of widely believed conjectures regarding the behavior of moments. Following the work \cite{Keating-Snaith_2000}, it is expected that, for all $\beta > 0$,
    \begin{equation}\label{eqn: moment conjecture}
        \frac{1}{T}\int_T^{2T}|\zeta(\tfrac 12 + \ii t)|^\beta\rd t\sim C_\beta (\log T)^{\beta^2/4}, \quad \text{as } T\to\infty,
    \end{equation}
    and that the constant $C_\beta > 0$ factors into a product of two constants: one is computed from the moments of the characteristic polynomial of random unitary matrices, and the other is an arithmetic factor coming from the small primes.

    There are a few results supporting \eqref{eqn: moment conjecture}. First, the conjecture \eqref{eqn: moment conjecture} is known for $\beta = 2$ and $\beta = 4$ following the classical work of Hardy-Littlewood and Ingham.
    Upper bounds of the correct order of magnitude are established in \cite{heap-radziwill-soundararajan} for $0 < \beta \leq 4$.
    Meanwhile, lower bounds of the correct order of magnitude have been established for all $\beta \geq 2$ in \cite{RS}.
    Conditionally on the Riemann hypothesis, the correct order of magnitude of \eqref{eqn: moment conjecture} is known for all $\beta > 0$ (see \cite{Sound09,harper_sharp_13} for the upper bounds and \cite{HB1} for the lower bounds).

\subsection{Maxima and moments over short intervals}

    Motivated by the problem of understanding the global maximum, \cite{FyodorovHiaryKeating2012, MR3151088} initiated the question of understanding the true size of \textit{the local maximum of $\zeta(\tfrac{1}{2} + \ii t)$} by establishing a connection with log-correlated processes.
    If $\tau$ is sampled uniformly on $[T,2T]$, they conjectured that for any $0<\delta<1$, there exists $C=C(\delta)>0$ large enough and independent of $T$, such that with probability $1-\delta$,
    \begin{equation}\label{eqn: FHK}
        \max_{h\in[-1,1]}  \log |\zeta(\tfrac 12 + \ii \tau + \ii h)| - \big(\log\log T - \frac{3}{4} \log\log\log T\big) \in [-C,C].
    \end{equation}
    They also conjectured weak convergence, with a limiting tail of the form $C y e^{-2y}$. The leading order $\log \log T$ was proved in \cite{najnudel2018} (conditionally on the Riemann hypothesis for the lower bound) and in \cite{ABBRS_2019} unconditionally.
    The sharp upper bound was recently established in \cite{ABR-2020}.

    It is also conjectured in \cite{FyodorovHiaryKeating2012, MR3151088} (see Equations (14) and (2.30), respectively) that the moments in a short interval undergo a {\it freezing phase transition}, that is, the event
    \begin{equation}\label{eqn: FK}
        \int_{[-1,1]} |\zeta(\tfrac 12 + \ii \tau + \ii h)|^{\beta} \rd h =
        \begin{cases}
            (\log T)^{\beta^2 / 4 + \oo(1)}, & \text{ if } \beta \leq 2, \\[1mm]
            (\log T)^{\beta - 1  + \oo(1)}, & \text{ if } \beta > 2,
        \end{cases}
    \end{equation}
    has probability $1 - \oo(1)$ as $T\to \infty$.
    \cite{MR3151088} also state corresponding conjectures for mesoscopic intervals of length $\log^\theta T$ when $\theta \in (-1, 0)$, as well as finer asymptotics for the moments.

    In view of Equations \eqref{eqn: moment conjecture} and \eqref{eqn: FK}, an obvious question is to determine up to which  interval size the freezing phase transition persists.
    In this paper, we establish that freezing transitions occur exactly for interval sizes of order $\log^\theta T$ with $\theta > -1$.
    We also obtain the corresponding results for local maxima over such intervals.
    The following functions will be crucial to our analysis:
    \begin{equation}\label{eq:def:free.energy}
        \begin{aligned}
            &\text{$\theta\leq 0$:} \quad
            &&m(\theta) \leqdef 1 + \theta, \quad
            &&f_{\theta}(\beta) \leqdef
            \begin{cases}
                \frac{\beta^2}{4} (1 + \theta) + \theta, &\mbox{if } \beta \leq \beta_c (\theta)=2, \\[1mm]
                \beta m(\theta) - 1, &\mbox{if } \beta > \beta_c(\theta),
            \end{cases}
            \\[2mm]
            &\text{$\theta>0$:} \quad
            &&m(\theta) \leqdef \sqrt{1 + \theta}, \quad
            &&f_{\theta}(\beta) \leqdef
            \begin{cases}
                \frac{\beta^2 }{4} + \theta, &\mbox{if } \beta \leq \beta_c (\theta)=2 \sqrt{1 + \theta}, \\[1mm]
                \beta m(\theta) - 1, &\mbox{if } \beta > \beta_c(\theta).
            \end{cases}
        \end{aligned}
    \end{equation}

    \begin{theorem}[Moments]\label{thm: freezing}
        Let $\theta > -1$, $\beta > 0$ and $\e > 0$ be given.
        Let $\tau$ be a random variable uniformly distributed on $[T,2T]$.
        Then, as $T\to \infty$, we have
        \begin{equation}
            \P\Big(\int_{-\log^\theta T}^{\log^\theta T} |\zeta(\tfrac 12 + \ii \tau + \ii h)|^{\beta} \rd h < (\log T)^{f_{\theta}(\beta) - \e}\Big) = \oo(1).
        \end{equation}
        Moreover, if $\theta \leq 3$ or if the Riemann hypothesis holds, then as $T \rightarrow \infty$,
        \begin{equation} \label{eq:bbbb}
            \P\Big(\int_{-\log^\theta T}^{\log^\theta T} |\zeta(\tfrac 12 + \ii \tau + \ii h)|^{\beta} \rd h > (\log T)^{f_{\theta}(\beta) + \e}\Big) = \oo(1).
        \end{equation}
    \end{theorem}

    \begin{proof}
        For the upper bound, see Section~\ref{se:upperbounds}, and for the lower bound, see Proposition~\ref{prop:lower.bound.moments.zeta}.
    \end{proof}

    When $\beta > \beta_{c}(\theta)$, the moments exhibit \textit{freezing}, i.e.\ they are dominated by a few large values at the level of the local maximum of $|\zeta(\tfrac 12 + \ii \tau + \ii h)|$, $|h| \leq \log^\theta T$.
    Theorem~\ref{thm: freezing} also suggests that freezing does not occur for intervals larger than any fixed power of $\log T$, since $\beta_c(\theta)\to \infty$ as $\theta\to\infty$.
    We note that recently a sharp upper bound in the case $(\theta = 0, \beta = 2)$ has been established in \cite{Harper101}, thus refining the $(\log T)^{\e}$ factor appearing in \eqref{eq:bbbb} when $\theta = 0$ and $\beta = 2$.

    \begin{theorem}[Local maximum]\label{thm: max}
        Let $\theta > - 1$ and $\e > 0$ be given.
        Let $\tau$ be a random variable uniformly distributed on $[T,2T]$.
        Then, as $T\to \infty$, we have
        \begin{equation}
            \P\Big(\max_{|h| \leq \log^\theta T} |\zeta(\tfrac 12 + \ii \tau + \ii h)| < (\log T)^{m(\theta) - \e}\Big) = \oo(1).
        \end{equation}
        Moreover, if $\theta \leq 3$ or if the Riemann hypothesis holds, then as $T \rightarrow \infty$,
        \begin{equation}
            \P\Big(\max_{|h| \leq \log^\theta T} |\zeta(\tfrac 12 + \ii \tau + \ii h)| > (\log T)^{m(\theta) + \e}\Big) = \oo(1).
        \end{equation}
    \end{theorem}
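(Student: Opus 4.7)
The plan is to deduce Theorem~\ref{thm: max} from Theorem~\ref{thm: freezing} by extracting the supremum of $|\zeta|$ on the short interval from information on its $L^\beta$ norm, with the exponent $\beta$ optimized at the end.

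\emph{Lower bound.} For every $\beta > 0$, the trivial sup-vs-average inequality
\begin{equation*}
\max_{|h|\le \log^\theta T}|\zeta(\tfrac12+\ii\tau+\ii h)|^\beta \ge \frac{1}{2\log^\theta T}\int_{-\log^\theta T}^{\log^\theta T}|\zeta(\tfrac12+\ii\tau+\ii h)|^\beta \rd h,
\end{equation*}
combined with the lower bound of Theorem~\ref{thm: freezing}, yields with $\P$-probability $1-\oo(1)$,
\begin{equation*}
\max_{|h|\le \log^\theta T}|\zeta(\tfrac12+\ii\tau+\ii h)| \ge (\log T)^{(f_\theta(\beta)-\theta-\e)/\beta + \oo(1)}.
\end{equation*}
In the linear regime $\beta > \beta_c(\theta)$, a direct computation from \eqref{eq:def:free.energy} gives $(f_\theta(\beta)-\theta)/\beta = m(\theta) - (1+\theta)/\beta$, in both the mesoscopic ($-1<\theta\le 0$) and macroscopic ($\theta > 0$) cases. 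Hence, for any prescribed $\e'>0$, first choosing $\beta$ large enough that $(1+\theta)/\beta < \e'/2$ and then $\e < \beta \e'/4$ recovers the stated lower bound.

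\emph{Upper bound.} The crucial analytic step is a Bernstein-type local mean-value inequality
\begin{equation*}
|\zeta(\tfrac12+\ii\tau+\ii h_0)|^\beta \le C (\log T) \int_{h_0-1/\log T}^{h_0+1/\log T}|\zeta(\tfrac12+\ii\tau+\ii h)|^\beta \rd h,
\end{equation*}
valid uniformly for $h_0\in[-\log^\theta T,\log^\theta T]$ and for a set of $\tau$ of $\P$-measure $1-\oo(1)$. The route I would take is to approximate $\zeta$ on the critical line by a Dirichlet polynomial of length $\asymp\sqrt{T}$ (valid outside a $\tau$-set of measure $\oo(T)$) and then invoke the classical Bernstein inequality for exponential polynomials, which forces $|P|$ not to drop by more than a fixed factor on a scale shorter than $\asymp 1/\log T$; an equivalent route uses Cauchy's formula applied to $\zeta^k$ on a disk of radius $\asymp 1/\log T$, together with classical off-line fourth-moment bounds to control the contribution away from the critical line. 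Taking the supremum over $h_0$ and applying the upper bound of Theorem~\ref{thm: freezing} yields, with $\P$-probability $1-\oo(1)$,
\begin{equation*}
\max_{|h|\le \log^\theta T}|\zeta(\tfrac12+\ii\tau+\ii h)|^\beta \le (\log T)^{f_\theta(\beta)+1+\e+\oo(1)}.
\end{equation*}
For every $\beta > \beta_c(\theta)$, the identity $(f_\theta(\beta)+1)/\beta = m(\theta)$ holds identically in $\beta$ by \eqref{eq:def:free.energy}, so choosing any such $\beta$ and sufficiently small $\e$ completes the proof.

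The only non-routine ingredient is the Bernstein-type inequality above. The hard part is that $\zeta$ may vanish arbitrarily close to the critical line, so a pointwise mean-value property of $|\zeta|$ on a tiny real interval cannot hold in full generality; one must work with $|\zeta|^\beta$ as a subharmonic function or with a polynomial approximation, and must discard an exceptional set of $\tau$ of measure $\oo(T)$ compatible with the one already appearing in Theorem~\ref{thm: freezing}. Once this estimate is secured, the whole theorem reduces to the arithmetic identities $(f_\theta(\beta)+1)/\beta = m(\theta)$ for $\beta > \beta_c(\theta)$ and $(f_\theta(\beta)-\theta)/\beta \to m(\theta)$ as $\beta\to\infty$, both of which are immediate from \eqref{eq:def:free.energy} and treat the two regimes of $\theta$ uniformly.
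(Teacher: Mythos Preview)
Your lower bound argument is correct and coincides with the paper's (see the proof of Proposition~\ref{prop:lower.bound.maximum.zeta}).

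For the upper bound there is a genuine circularity. You invoke the upper bound of Theorem~\ref{thm: freezing} in the supercritical regime $\beta > \beta_c(\theta)$, but in this paper that case is proved by partitioning the integral over level sets of $|\zeta|$ and using the upper bound of Theorem~\ref{thm: max} itself to cap the highest level set (see the second half of the proofs of Theorem~\ref{thm: freezing} in Section~\ref{se:upperbounds}, both for $\theta\ge 0$ and $\theta<0$). So deducing Theorem~\ref{thm: max} from the supercritical part of Theorem~\ref{thm: freezing} is circular. The repair is immediate: take $\beta=\beta_c(\theta)$, where the upper bound of Theorem~\ref{thm: freezing} is established directly from Markov's inequality and the moment estimates, and the identity $(f_\theta(\beta_c)+1)/\beta_c = m(\theta)$ still holds.

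The paper in fact reverses the order of dependence: it proves the upper bound of Theorem~\ref{thm: max} first, directly from Markov's inequality, the discretization of Corollary~\ref{cor:discrzeta} (for $\theta\ge 0$) or Corollary~\ref{cor:union} on the twisted function $\zeta\cdot e^{-\mathcal P_{|\theta|}}$ (for $\theta<0$), and the moment bounds of Propositions~\ref{pr:soundbound}, \ref{pr:maksbound}, \ref{pr:momenttwi}, optimizing at $\beta=\beta_c(\theta)$. It then feeds Theorem~\ref{thm: max} into the supercritical case of Theorem~\ref{thm: freezing}. Your Bernstein-type inequality is precisely the content of the paper's discretization (Proposition~\ref{prop:union}), proved via Paley--Wiener and Poisson summation for the Dirichlet polynomial approximation of $\zeta$; this holds deterministically for all $\tau\in[T,2T]$ up to a $T^{-B}$ error, not merely outside an exceptional set, and does not require the subharmonic or Cauchy-formula detours you sketch.
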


    \begin{proof}
        For the upper bound, see Section~\ref{se:upperbounds}, and for the lower bound, see Proposition~\ref{prop:lower.bound.maximum.zeta}.
    \end{proof}

    It is instructive to put these results in the context of two well-known facts on $\zeta$.
    First, Selberg's central limit theorem, see for example \cite{selberg-1946,selberg-1989} or the simple proof in \cite{RadSou15}, states that, for any given $a < b$,
    \begin{equation}
        \P\Bigg(\frac{\log |\zeta(\tfrac 12 + \ii \tau)|}{\sqrt{\frac{1}{2} \log\log T}} \in (a, b)\Bigg) \xrightarrow{T\to\infty} \int_{a}^{b} \frac{e^{-u^2 / 2}}{\sqrt{2\pi}} \, \rd u.
    \end{equation}
    In other words, a typical value of $\log |\zeta(\tfrac 12 + \ii \tau)|$ is a Gaussian random variable of variance $\frac{1}{2}\log\log T$.
    This is consistent with the moment conjecture \eqref{eqn: moment conjecture} which gives a precise expression for the Laplace transform of $\log |\zeta(\tfrac 12 + \ii \tau)|$.
    Second, since $\zeta(\tfrac 12 + \ii t)$ varies on the scale of $(\log T)^{-1}$ for $T \leq t \leq 2T$, the analysis of large values should be reducible to a discrete set of $(\log T)^{1+\theta}$ points.
    Putting these two facts together, one expects that
    the statistics of extreme values of $\log |\zeta(\tfrac 12 + \ii \tau + \ii h)|$, $|h| \leq \log^\theta T$, should be similar to the ones of $(\log T)^{1+\theta}$ Gaussian random variables
    of variance $\frac{1}{2}\log\log T$.
    If the random variables were independent, this is the so-called {\it Random Energy Model} (REM) in statistical mechanics introduced in \cite{Derrida_1981}.
    For $\theta\geq 0$, it is not hard to check, using basic Gaussian tail estimates, that the expression \eqref{eq:def:free.energy} corresponds to the free energy of the model, and the results of Theorem~\ref{thm: max}, to the maximum of the REM.
    For more on this, we refer to \cite{MR3380419}, where many techniques from REM were introduced to analyze log-correlated processes.

    The REM heuristic is of course limited as the values of $\log |\zeta(\tfrac 12 + \ii \tau + \ii h)|$, $|h| \leq \log^\theta T$, are correlated.
    In fact, they are log-correlated if $|h-h'|\leq 1$, as first noticed \cite{bourgade}. A good probabilistic model for the extreme values in the case $\theta=0$ is therefore a branching random walk. This is explained in more details in Section~\ref{sect: outline} and illustrated in Figure~\ref{fig: tree}.
    For $\theta> 0$, our results show that the correlations do not affect large values at leading order (though the proofs must take them into account).
    As argued in Section~\ref{sect: outline}, we believe that the correct probabilistic model for large values in this case is $\log^\theta T$ independent branching random walks.
    One implication is that the REM heuristic should persist to subleading order (but fail at the level of fluctuations).
    In view of this, we believe that conjecture \eqref{eqn: FHK} needs to be expanded as follows to include large intervals:

    \begin{conjecture}\label{conj}
        Let $\theta \geq 0$ be given and let $m(\theta)$ be as in \eqref{eq:def:free.energy}.
        Let $\tau$ be a random variable uniformly distributed on $[T,2T]$.
        For any $0<\delta<1$, there exists $C=C(\delta)>0$ large enough and independent of $T$, such that with probability $1-\delta$,
        \begin{equation}\label{eqn: FHK.extended}
            \begin{aligned}
                &\max_{|h| \leq \log^\theta T}  \log |\zeta(\tfrac 12 + \ii \tau + \ii h)|- \big(m(\theta) \log\log T - r(\theta) \log\log\log T\big) \in [-C,C],
            \end{aligned}
        \end{equation}
        where
        \begin{equation*}
            \text{$r(\theta) = \frac{3}{4}$ \ \ if $\theta = 0$ \qquad and \qquad $r(\theta) = \frac{1}{4 \sqrt{1 + \theta}}$ \ \ if $\theta > 0$. }
        \end{equation*}
    \end{conjecture}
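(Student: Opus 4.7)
The plan is to refine Theorem \ref{thm: max} from leading-order to $\OO_\delta(1)$ tightness, pinning down the subleading correction $-r(\theta)\log\log\log T$ exactly. In both regimes, the starting point is the Dirichlet-polynomial approximation
\begin{equation*}
    \log|\zeta(\tfrac12 + \ii\tau + \ii h)| = \sum_{p \leq T} \Re(p^{-1/2 - \ii(\tau+h)}) + \OO(1),
\end{equation*}
valid off a small exceptional set, combined with the dyadic decomposition into prime ranges $p \in [\exp(e^{k-1}), \exp(e^k)]$ for $k = 0, 1, \ldots, \log\log T$. The resulting increments $Y_k(h)$ are approximately independent Gaussians of variance $\tfrac12$ in $\tau$ with covariance $\operatorname{Cov}(Y_k(h_1), Y_k(h_2)) \approx \tfrac{1}{2}\1\{|h_1 - h_2| \leq e^{-k}\}$, giving the approximate tree/log-correlated structure already exploited in the proofs of Theorems \ref{thm: freezing}--\ref{thm: max}.

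For $-1 < \theta \leq 0$, only scales $-\theta\log\log T \lesssim k \leq \log\log T$ contribute to variation in $h$, producing a log-correlated surrogate of effective depth $(1+\theta)\log\log T$ and velocity $m(\theta) = 1+\theta$. The correction $-\tfrac34\log\log\log T$ matches the Bramson-type subleading correction for the maximum of such a field. I would adapt the barrier technique of Bramson and A\"{\i}d\'ekon to the zeta surrogate: the upper tail via a first-moment bound on the number of leaves exceeding $m(\theta)\log\log T - \tfrac{3}{4}\log\log\log T + C$ subject to the entropic-repulsion barrier $k \mapsto m(\theta)k - \tfrac{3}{4}\log k$, and the lower tail via a truncated second-moment argument restricted to the same barrier event, with constants controlled uniformly in $T$ and depending only on $\delta$.

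For $\theta > 0$, the conjectured pair $(m(\theta), r(\theta)) = (\sqrt{1+\theta}, 1/(4\sqrt{1+\theta}))$ coincides, after a direct computation, with the classical Gumbel-type centering of the maximum of $N := (\log T)^{1+\theta}$ independent $N(0, \tfrac12 \log\log T)$ Gaussians, i.e.\ the REM prediction. The strategy is therefore to show that the correlations of the zeta surrogate do not perturb the maximum beyond $\OO_\delta(1)$. Concretely, one would compare from above and below to an auxiliary REM: for the upper bound, a first-moment count over $\sim (\log T)^{1+\theta}$ sample points together with a chaining argument bridging intervals between them; for the lower bound, a modified second-moment method on a sparse subset of points, exploiting the fact that in the large-deviation regime $x \asymp (\sqrt{1+\theta}-1)\log\log T$ the right tails of log-correlated fields and REMs coincide up to constants. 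A standard extreme-value calculation then yields the centering $\sqrt{1+\theta}\log\log T - \tfrac{1}{4\sqrt{1+\theta}}\log\log\log T + \OO(1)$, matching \eqref{eqn: FHK.extended}.

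The principal obstacle, common to both regimes, is sharpening the $\oo(1)$ relative precision of Theorem \ref{thm: max} into $\OO(1)$ absolute precision in the subleading term. For $\theta \leq 0$ this requires the Bramson/A\"{\i}d\'ekon barrier estimates to survive the weak non-Gaussianity and random multiplicative structure of the prime sum, and to deliver matching $\OO(1)$-tight lower bounds that are not yet available in the zeta literature even at $\theta = 0$. For $\theta > 0$ the binding difficulty is to establish an effective REM-comparison for the zeta surrogate in the deep large-deviation regime, a setting that lies outside existing universality results for log-correlated fields. In addition, the unconditional range $\theta \leq 3$ in Theorem \ref{thm: max} suggests that, without the Riemann hypothesis, the tightness proof at large $\theta$ would demand a refined zero-density input to control exceptional $\tau$ at which zeros cluster. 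The above scheme is therefore a roadmap contingent on substantial advances in the log-correlated toolbox beyond what is presently known.
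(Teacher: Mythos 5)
The statement you are addressing is Conjecture \ref{conj}: the paper does not prove it, and offers no proof to compare against. What the paper actually establishes (Theorems \ref{thm: freezing} and \ref{thm: max}) is only the leading order $m(\theta)\log\log T$ up to a multiplicative $(\log T)^{\pm\e}$; the subleading $-r(\theta)\log\log\log T$ term and the $\OO_\delta(1)$ tightness in \eqref{eqn: FHK.extended} are precisely what is being conjectured. Your proposal is, in substance, an elaboration of the heuristic the authors themselves give in Section \ref{sect: outline} to motivate the conjecture: the branching-random-walk picture with Bramson's $\tfrac34$ correction for $-1<\theta\leq 0$, and the REM/Gumbel centering for $\log^{1+\theta}T$ Gaussians of variance $\tfrac12\log\log T$ for $\theta>0$ (your computation that this yields $r(\theta)=\tfrac{1}{4\sqrt{1+\theta}}$ is correct and is exactly where the conjectured constant comes from). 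But a heuristic consistent with the conjectured answer is not a proof, and you acknowledge as much.

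The gaps are the ones you name, and they are fatal to the argument as written. First, the reduction $\log|\zeta(\tfrac12+\ii\tau+\ii h)|=\sum_{p\leq T}\Re p^{-1/2-\ii(\tau+h)}+\OO(1)$ uniformly in $h$ off a small exceptional set is not available at $\OO(1)$ precision; the paper only has a one-sided inequality of this type (conditionally on RH, with a $\OO(\log\log\log T)$ error, cf.\ \eqref{eq:prop:Soundararajan.2009.Proposition.and.Lemma.2}) and a lower-bound reduction via mollification off-axis that loses $(\log T)^{\oo(1)}$ factors. Second, the two-sided barrier estimates (Bramson/A\"{\i}d\'ekon) have not been carried out for the zeta field at this level of precision, and the sharp REM comparison in the large-deviation regime for $\theta>0$ is likewise unestablished. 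Since the conclusion of each step is assumed rather than derived, the proposal is a research program, not a proof; the correct disposition of this statement is that it remains open, as the paper itself indicates by labelling it a conjecture.
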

    In particular, we expect a discontinuity of $r(\theta)$ as $\theta\downarrow 0$.
    An analysis of a model of the Riemann zeta function shows that the discontinuity can be resolved by approaching $0$ at a suitable rate. Namely if $\theta\sim (\log\log T)^{-\alpha}$, it is expected that $r(\theta)=\frac{1+2\alpha}{4}$, interpolating between $1/4$ and $3/4$ for $0<\alpha<1$, see \cite{ADH_2021}. Such hybrid statistics have been studied in the context of branching random walks, see \cite{MR3358969} and \cite{MR4156608}.

    For $\theta<0$, our analysis suggests that the correct model consists of a single random walk up to {\it time} $|\theta|\log\log T$ followed by a branching random walk. The maximum on such intervals would then be consistent with the level proposed in Section 2\hspace{0.5mm}(c)(ii) of \cite{MR3151088},
    \begin{equation}\label{eqn: FHK.extended.theta.smaller.0}
        \begin{aligned}
            &\max_{|h| \leq \log^\theta T}  \log |\zeta(\tfrac 12 + \ii \tau + \ii h)| \\[-2mm]
            &\qquad= m(\theta) \log\log T - \frac{3}{4} \log\log\log T+ \sqrt{\tfrac{|\theta|}{2}\log\log T}\cdot \mathcal{Z} + \OO_{\P}(1),
        \end{aligned}
    \end{equation}
    where $\mathcal Z$ is a standard Gaussian random variable. As explained in  Section~\ref{sect: outline}, the additional fluctuation would represent the contribution of the Dirichlet polynomial $\sum_{\log p\leq \log^{|\theta|} T}\Re p^{-1/2-\ii(\tau+h)}$ which is essentially the same random variable for all $h$'s in the interval $|h|\leq \log^\theta T$.

    \subsection{Relations to other models}

    When $-1 < \theta \leq 0$, Conjecture~\ref{conj} is based on modelling $\zeta$ by the characteristic polynomial of a random unitary matrix (CUE).
    More precisely, if $M_N$ is a random matrix sampled from the Haar measure on the unitary group $\mathcal U(N)$, one can consider the moments
    \begin{equation}
        \E\bigg[\Big(\frac{1}{2\pi}\int_{0}^{2\pi}|\det(\mathbb{I} - e^{-\ii h}M_N)|^{2\beta}\rd h\Big)^k\bigg], \quad k > 0, ~\beta > 0.
    \end{equation}
    These can be computed in the limit $N\to\infty$, at least heuristically, using Selberg integrals and the Fisher-Hartwig formula, cf.\ \cite{MR3151088}.
    Exact expressions were recently obtained in \cite{bailey-keating} in the regime $k,\beta\in \N$.
    The statistics of $\log \int_{0}^{2\pi}|\det(\mathbb{I} - e^{-\ii h}M_N)|^{2\beta}\rd h$ and of $\max_{h\in[0,2\pi]} |\det(\mathbb{I} - e^{-\ii h}M_N)|$ in the limit $N\to\infty$ can be inferred from the asymptotics of the moments
    by comparison with log-correlated processes, cf.\ \cite{fyodorov-gnutzmann-keating} for a numerical study.
    In the CUE setting, the freezing analogue of \eqref{eqn: FK} and the leading order as in \eqref{eqn: FHK} were proved in \cite{ABB_2017}.
    The subleading order of the maximum was proved in \cite{paquette-zeitouni}, and up to constant $C$ in \cite{chhaibi-madaule-najnudel}.

    From the analysis of a particular variant of the log-correlated REM model, \cite{fyodorov-bouchaud} conjectured an exact formula for the density of the total mass of the sub-critical Gaussian multiplicative chaos (GMC) measure associated to the Gaussian free field (GFF) on the unit circle, cf.\ \cite{rhodes-vargas}.
    In the critical case, they conjectured that the fluctuations of the maximum can be captured by a sum of two Gumbel variables.
    Both results were proved in \cite{remy}.
    Naturally, these results are expected to hold in the CUE setting, where the GMC measure is the limit of
    \begin{equation}
        \frac{|\det(\mathbb{I} - e^{-\ii h}M_N)|^{2\beta}}{\E[|\det(\mathbb{I} - e^{-\ii h}M_N)|^{2\beta}]}\frac{\rd h}{2\pi},
    \end{equation}
    as proved by \cite{webb2015} when $-1/4 < \beta < 1/\sqrt{2}$, and by \cite{nikula-saksman-webb} when $1/\sqrt{2} \leq \beta < 1$.
    Such a random measure can also be considered in the context of the Riemann zeta function for mesoscopic intervals of length $\log^\theta T$, $-1<\theta\leq 0$, with $|\zeta(\tfrac 12 + \ii \tau + \ii h)|$ in place of $|\det(\mathbb{I} - e^{-\ii h}M_N)|$.
    (There does not seem to be any obvious equivalent for macroscopic intervals, $\theta>0$, in the CUE model.)
    A step in this direction was made in \cite{saksman-webb} where $\zeta(\tfrac 12 + \ii \tau + \ii h)$, $h\in \R$, was shown to converge, as $T\to\infty$, when considered as a random variable on the space of tempered distributions.

    Another model for the large values of $\log|\zeta(\tfrac 12 + \ii \tau + \ii h)|$, $h\in[-1,1]$, is to consider a random Dirichlet polynomial $X_h = \Re \sum_{p\leq T} \ p^{- 1/2 -\ii h}U_p$, where $(U_p, \, p \text{ primes})$ are i.i.d.\ uniform random variables on the unit circle, cf. \cite{harper_note_13, MR3619786, MR3906393}.
    The analogue of conjecture \eqref{eqn: FHK} for this model was proved up to second-order corrections in \cite{MR3619786}, and large deviations and continuity estimates for the derivative were found in \cite{MR3906393}.
    The limit of the corresponding multiplicative chaos measure was obtained in \cite{arXiv:1604.08378,saksman-webb}.
    A proof of the freezing phase transition was given in \cite{arguin-tai}.
    In the latter, the limit of the Gibbs measure $\exp(\beta X_h)\rd h$
    is also studied in the supercritical regime $\beta > 2$, showing that it is supported on $h$'s that are at a relative distance of order one or order $(\log T)^{-1}$ of each other.
    This result was used in \cite{ouimet2018} to prove that the normalized Gibbs weights converge to a Poisson-Dirichlet distribution.

    \vspace{2mm}
    \begin{notation}\rm
        For the rest of the paper, $\tau$ denotes a uniform random variable on $[T,2T]$.
        For any event $A_T\subseteq [T,2T]$ and a random variable $X_T:[T,2T]\rightarrow \C$, we write
        \begin{equation*}
            \P(A_T) = \frac{1}{T} \m(A_T) \quad \text{and} \quad \E[X_T] = \frac{1}{T} \int_T^{2T} X_T(t) \rd t.
        \end{equation*}
        We also use the standard $\oo$ and $\OO$ notations: thus, $f(T)=\oo(g(T))$ if $|f(T)/g(T)|$ tends to $0$ as $T\to\infty$ when the parameters $\theta$, $\beta$ and $\e$ are fixed.
        Similarly, we write $f(T)=\OO(g(T))$ if $\limsup |f(T)/g(T)|$ is bounded for $\theta$, $\beta$ and $\e$ fixed.
        We sometimes write for conciseness $f(T)\ll g(T)$ if $f(T)=\OO(g(T))$, and also $f(T) \asymp g(T)$ if both $f(T) \ll g(T)$ and $g(T) \ll f(T)$ hold.
        In some statements, we write $f(T)\ll_A g(T)$ or $f(T)=\OO_A(g(T))$ to highlight the dependence on a specific parameter $A$ in the implicit constant.
        In some of the proofs, we use the common convention that $\varepsilon$ denotes an arbitrarily small positive quantity that may vary from line to line.
        We will also encounter some arithmetical functions familiar in number theory.
        These include: $\omega(n)$ (which counts the number of distinct primes dividing $n$), $\Omega(n)$ (which counts with multiplicity the number of primes dividing $n$), and the M\"obius function $\mu(n)$ (which equals $0$ if $n$ is divisible by the square of a prime, and equals $(-1)^{\omega(n)}$ if $n$ is square-free).
        Throughout the paper, $x \vee y$ and $x \wedge y$ refer to $\max\{x,y\}$ and $\min\{x,y\}$, respectively.
    \end{notation}

    \subsection{Outline of the proof}\label{sect: outline}

	For $\theta > 0$, the upper bound part of Theorem~\ref{thm: freezing} and Theorem~\ref{thm: max} follows from the moment estimates
	\begin{equation}\label{eq:moments}
		\E\Big[|\zeta(\tfrac 12 + \ii \tau)|^{\beta}\Big] \ll (\log T)^{\beta^2 / 4 + \e},
	\end{equation}
	and from a discretization result which roughly shows that for a Dirichlet polynomial $D$ that approximates zeta, and for $\beta \geq 1$, we have
	\begin{equation}\label{eq:discretize}
        \max_{|h| \leq \log^\theta T} |D(\tfrac 12 + \ii \tau + \ii h)|^{\beta} \ll \sum_{|k| \leq \log^{1 + \theta} T} \big | D \big ( \tfrac{1}{2} + \ii \tau + \tfrac{2\pi \ii k}{\log T} \big ) \big |^{\beta}.
	\end{equation}
    Equation~\eqref{eq:discretize} tells us that the process $(\zeta(\tfrac{1}{2} + \ii \tau + \ii h), ~|h| \leq \log^\theta T)$ varies on a $(\log T)^{-1}$ scale, so that the maximum and moments of $\log |\zeta|$ on an interval of length $\OO(\log^\theta T)$ behave as those of $\OO(\log^{1 + \theta}T)$ i.i.d.~Gaussian random variables of variance $\tfrac{1}{2} \log \log T$.%
    \footnote{As in the branching random walk setting, the log-correlations are important in the proof of the first-order asymptotics of the maximum, high points and moments, but they do not influence the results. When comparing Gaussian fields, Slepian's lemma tells us that, at equal variance, the field with no correlations will have, on average, the highest maximum and the highest number of points above any fixed proportion of the maximum (the asymptotics of the moments are derived directly from these two quantities). Therefore, the asymptotics of the maximum and moments for i.i.d.\ Gaussians are always an upper bound for those of log-correlated Gaussian fields. It turns out that we get a matching lower bound by a coarse-graining of the scales following \cite{MR3380419}.
    This is why our heuristic here is phrased in terms of i.i.d.\ Gaussians, because the correlations ultimately only matters for the proof, not the actual results.}
    The limitation to $\theta \leq 3$ comes from the fact that the upper bounds \eqref{eq:moments} are not known unconditionally for $\beta > 4$.
		
    When $\theta < 0$, the upper bounds in Theorem~\ref{thm: freezing} and Theorem~\ref{thm: max} are a bit more delicate.
    We follow essentially the same strategy, but we apply it to the function
	\begin{equation}\label{eq:newzeta}
        (\zeta \cdot e^{-\mathcal{P}_{|\theta|}})(\tfrac 12 + \ii \tau), \quad \text{where } \mathcal{P}_{\alpha}(s) =\hspace{-2mm}\sum_{\log p \leq \log^{\alpha} T} \frac{1}{p^s} ~~~\text{for } \alpha > 0,
	\end{equation}
    instead of $\zeta(\tfrac 12 + \ii \tau)$.
    The reason is that, when $\theta < 0$, the contribution of the primes up to scale $|\theta|$ is negligible with high probability.
    Namely, with probability $1 -\oo(1)$,
	\begin{equation}
		\max_{|h| \leq \log^\theta T} \big|\mathcal{P}_{|\theta|}(\tfrac 12 + \ii \tau+\ii h)\big| = \oo(\log\log T).
	\end{equation}
	When $\tau$ is restricted to a specific event $\mathcal{A}(T)$ on which \eqref{eq:newzeta} can be discretized as in \eqref{eq:discretize}, we can show that
	\begin{equation}
        \E\Big[\big|(\zeta \cdot e^{-\mathcal{P}_{|\theta|}})(\tfrac{1}{2} + \ii \tau)\big|^{\beta}\Big] \ll (\log T)^{(\beta^2 / 4) \cdot (1 + \theta) + \e},
	\end{equation}
	for $\beta \leq 2$. This explains the additional factor $(\beta^2/4) \theta$ in $f_{\theta}(\beta)$ when $-1 < \theta < 0$ and $\beta \leq 2$.
	
    We then turn to the lower bound part of Theorem~\ref{thm: freezing} and Theorem~\ref{thm: max}.
    The lower bounds in Theorem~\ref{thm: max} follow directly from Theorem~\ref{thm: freezing} (see \eqref{eq:prop:lower.bound.zeta.maximum.eq.1}), so it is enough to discuss Theorem~\ref{thm: freezing}.
	
    The problem is first reduced to obtaining lower bounds for moments off the critical line.
    In particular, it is shown, uniformly in $\tfrac 12 \leq \sigma \leq \tfrac 12 + (\log T)^{\theta - \e}$ and for any given $\e > 0$, that with probability $1-\oo(1)$,
    \begin{equation}
        \int_{-\log^\theta T}^{\log^\theta T} |\zeta(\sigma + \ii \tau + \ii h)|^{\beta} \rd h \ll \int_{-3 \log^\theta T}^{3 \log^\theta T} |\zeta(\tfrac 12 + \ii \tau + \ii h)|^{\beta} \rd h + \frac{1}{(\log T)^{96}}.
	\end{equation}
    This is accomplished using a result of \cite{gabriel_1927} for subharmonic functions, and the construction of an explicit entire function which is a good approximation to the indicator function of the rectangle $\mathcal{R} = \{ \sigma + \ii u : |u| \leq (\log  T)^{\theta}, \, \tfrac 12 \leq \sigma \leq \tfrac 12 + (\log T)^{\theta - \e}\}$ in the whole strip $\tfrac 12 \leq \Re s$. The fact that the interval can be very small when $\theta < 0$ makes this part rather technical. We believe that this result might be useful in other applications as well.
	
	The problem is therefore reduced to obtaining a good lower bound for
	\begin{equation}
        \int_{-\log^\theta T}^{\log^\theta T} |\zeta(\sigma_0 + \ii \tau + \ii h)|^{\beta} \rd h, \quad \text{with } \sigma_0 = \frac{1}{2} + \frac{1}{(\log T)^{1 - \delta}},
	\end{equation}
    for some sufficiently small $\delta > 0$.
    We adapt mollification results from \cite{ABBRS_2019} to show that, outside of an event of probability $\oo(1)$, the problem can be reduced to understanding
    \vspace{-1.5mm}
	\begin{equation}
        \int_{-\log^\theta T}^{\log^\theta T} \exp\big(\beta \, \Re \mathcal{P}_{1 - \delta}(\sigma_0 + \ii \tau + \ii h)\big) \rd h.
	\end{equation}

	The proof of the lower bound is now restricted to the problem of understanding the correlation structure of the process
    \begin{equation}\label{eq:process}
    	\big(\Re \mathcal{P}_{1 - \delta}(\sigma_0 + \ii \tau + \ii h), ~ |h| \leq \log^\theta T\big).
    \end{equation}
    The remaining part of the argument is done in Section~\ref{sect: lower proofs} by a multiscale second moment method introduced in \cite{MR3380419}.	
    The covariance of the process \eqref{eq:process} can be computed using Lemma~\ref{lem: moments} with $a(p)=p^{-\sigma_0}(p^{-\ii h}+p^{-\ii h'})$:
    \begin{equation}\label{eqn: correlation}
        \begin{aligned}
            &\E\Big[\Re \mathcal{P}_{1 - \delta}(\sigma_0 + \ii \tau + \ii h)\cdot \Re \mathcal{P}_{1 - \delta}(\sigma_0 + \ii \tau + \ii h')\Big] \\[1mm]
            &\hspace{20mm}=\frac{1}{2}\sum_{\log p \leq (\log T)^{1 - \delta}} \frac{\cos(|h-h'|\log p)}{p^{2\sigma_0}} + \OO(1).
        \end{aligned}
    \end{equation}
    The cosine factor implies that primes smaller than $\exp(|h-h'|^{-1})$ are almost perfectly correlated, whereas primes greater than  $\exp(|h-h'|^{-1})$ decorrelate quickly.
    In fact, the covariance can be evaluated precisely using the prime number theorem and equals $\frac{1}{2}\log |h-h'|^{-1} +\OO(1)$.
    This shows that the process is approximatively a log-correlated Gaussian process.
    (This is also true for $\log|\zeta|$ in the sense of finite-dimensional distributions as shown in \cite{bourgade}.)

    The identification with a log-correlated process is useful as it suggests that the Dirichlet polynomials have an underlying tree structure.
    To see this, consider the {\it increments}
    \begin{equation}
        P_k(h) = \sum_{e^{k-1} < \log p \leq e^{k}} \Re \frac{1}{p^{\sigma_0 + \ii \tau + \ii h}}, \quad 1 \leq k \leq \log\log T.
    \end{equation}
    The range of primes is chosen so that each $P_k$ has variance $\tfrac 12 +\oo(1)$.
    In this framework, the Dirichlet polynomial at $h$ can be seen as a random walk with independent and identically distributed increments.
    However, the random walks for different $h$'s are not independent by \eqref{eqn: correlation}.
    In fact, the walks are almost perfectly correlated until they {\it branch out} around the prime $p\approx \exp(|h-h'|^{-1})$, corresponding to the increment $k(h,h')=\log |h-h'|^{-1}$.
    Since $k$ goes to essentially $\log\log T$, the analysis can be restricted to $h$'s on a grid with mesh $(\log T)^{-1}$.
    Furthermore, the $h$'s in an interval of size $(\log T)^{-\alpha}$, for $0<\alpha<1$, will share the same increments up to $k\approx \alpha \log\log T$.

    The above observations have important consequences for the probabilistic analysis.
    For $\theta=0$, this means that the process \eqref{eq:process} on an interval of order one is well approximated by a Gaussian process indexed by a tree of average degree $e=2.718\dots$,
    where the independent increments $P_k(h)$ are identified with the edges of the tree. Note that the number of leaves on the interval $[-1,1]$ is then $\approx e^{\log\log T}=\log T$.
    Equivalently, the walks $\sum_k P_k(h)$, $h\in[-1,1]$, can be seen as a branching random walk on a Galton-Watson tree with an average number of offspring $e$, cf. Figure~\ref{fig: tree}.

    \begin{figure}[ht]
        \hspace{-3mm}\includegraphics[width=0.8\linewidth,height=0.7\linewidth]{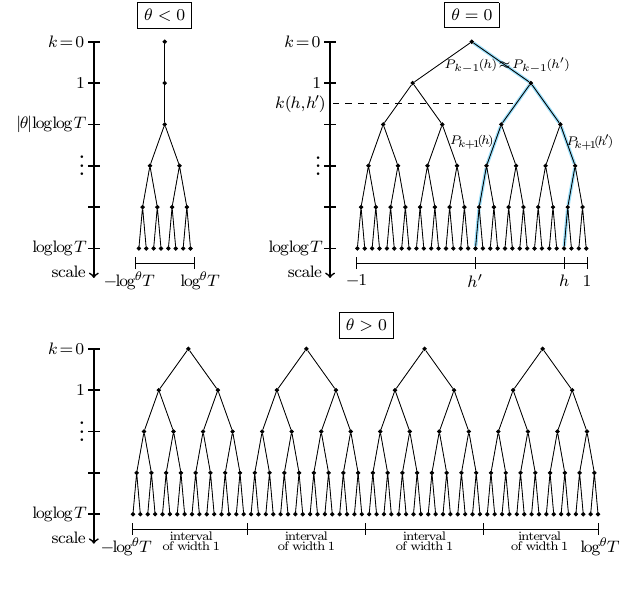}
        \vspace{-6mm}
        \caption{An illustration of the branching structure of $\sum_{k}P_k$ for the interval $[-\log^{\theta} T, \log^{\theta} T]$ with $\theta < 0$, $\theta = 0$ and $\theta > 0$. When $\theta < 0$, the increments $P_k$ approximately coincide at each scale up to scale $|\theta| \log \log T$ (so one branch), followed by the structure of a branching random walk. When $\theta = 0$, we have one branching random walk starting at scale $0$.
        When $\theta > 0$, there is a forest of $\asymp \log^{\theta} T$ approximately independent branching random walks.} \vspace{-3mm}
        \label{fig: tree}
    \end{figure}

    When $\theta<0$, the tree structure suggests that the primes up to $\exp(\log^{|\theta|} T)$ do not contribute to large values, since they should be essentially the same for all $h$'s in the interval .
    Therefore these primes can be cutoff at a low cost, cf.\ Corollary~\ref{cor:DP.cutting.0.to.theta}.
    This is equivalent to restricting to a subtree of the one on $[-1,1]$ with $(1+\theta)\log\log T$ increments and $\log^{1+\theta} T$ leaves, yielding a maximum at leading order of $(1+\theta)\log\log T$ by the REM heuristic.

    The case $\theta>0$ stands out as the analogy with branching random walks fails.
    This is because the random walks for $h$ and $h'$ are essentially independent when ${|h-h'|>1}$.
    Therefore the right probabilistic model seems to consist of $\asymp \log^\theta T$ independent branching random walks corresponding to different intervals of order one, see Figure~\ref{fig: tree}.
    A large class of similar models (called CREM's for {\it Continuous Random Energy Models}) have been studied in \cite{Bovier-Kurkova2004}, see \cite{Bovier2006,Bovier2017} for a review.
    It turns out that the large values at leading order correspond to the ones of a REM with $\log^{1+\theta} T$ variables of variance $\frac{1}{2}\log\log T$.
    This yields a maximum of $\sqrt{1+\theta}\log\log T$ at leading order.
    In fact, in view of the extreme value statistics of CREM's, we expect that the REM heuristic holds for subleading corrections.
    This is the motivation for Conjecture~\ref{conj}.

\section{Upper bounds}\label{sec:upper.bounds}

    \subsection{Moment estimates}

    We will need a number of moment estimates which we state below.

    \begin{proposition}\label{pr:soundbound}
        Assume the Riemann hypothesis. Let $\beta > 0$ and $\e > 0$ be given. Then,
        \begin{equation}
            \E\big[|\zeta(\tfrac 12 + \ii \tau)|^{\beta}\big] \ll (\log T)^{\beta^2 / 4 + \e}.
        \end{equation}
    \end{proposition}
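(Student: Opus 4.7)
The plan is to use Soundararajan's conditional (under RH) pointwise upper bound for $\log|\zeta(\tfrac 12 + \ii t)|$ in terms of a short Dirichlet polynomial over primes, and then to take the exponential moment via a near-independence argument. Specifically, under RH, for any parameter $2 \leq x \leq T$ and all $t$ with $T \leq t \leq 2T$,
\begin{equation*}
    \log|\zeta(\tfrac{1}{2} + \ii t)| \leq \Re \sum_{p \leq x} \frac{1}{p^{1/2 + \ii t}} \cdot \frac{\log(x/p)}{\log x} + \frac{\log T}{\log x} + \OO(1),
\end{equation*}
with an $\OO(1)$ contribution from prime powers. I would choose $x = (\log T)^A$ for a large constant $A = A(\beta,\e)$. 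The explicit $\log T / \log x$ term then contributes a factor at most $(\log T)^{\beta/A}$ to $|\zeta(\tfrac 12 + \ii t)|^\beta$, which is at most $(\log T)^{\e/2}$ once $A > 2\beta/\e$.

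Next, I would bound the resulting exponential moment of the short Dirichlet polynomial using Lemma \ref{lem: moments} of the paper with $a(p) = \beta w_p / p^{1/2}$, where $w_p = \log(x/p)/\log x \in [0,1]$. Since $x = (\log T)^A$ is polylogarithmic in $T$, the lemma should say that $(p^{-\ii \tau})_{p \leq x}$ behaves like an i.i.d.\ family of uniform random variables on the unit circle, so that the exponential moment factorizes over primes up to a $1+\oo(1)$ factor. Using $\E[\exp(\beta \Re (w_p p^{-1/2} U_p))] = I_0(\beta w_p / p^{1/2})$ (the modified Bessel function of the first kind) and expanding $\log I_0(z) = z^2/4 + \OO(z^4)$ for $|z| \leq 1$ yields
\begin{equation*}
    \E\Big[\exp\Big(\beta \Re \sum_{p \leq x} \frac{w_p}{p^{1/2 + \ii \tau}}\Big)\Big] \ll \exp\Big(\frac{\beta^2}{4} \sum_{p \leq x} \frac{w_p^2}{p} + \OO(1)\Big).
\end{equation*}

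By Mertens' theorem, $\sum_{p \leq x} w_p^2/p \leq \sum_{p \leq x} 1/p = \log\log x + \OO(1) = \log\log T + \OO(1)$, so the exponential moment above is $\OO((\log T)^{\beta^2/4})$. Multiplying by the $(\log T)^{\e/2}$ factor from the first step yields the claim.

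The argument is essentially that of Soundararajan; the only subtlety is that the moment identity for Dirichlet polynomials requires the length $x$ to be at most a small power of $T$, which is automatically satisfied for $x = (\log T)^A$. I do not expect any genuine obstacle in adapting the classical proof. Harper's refinement would in fact remove the $\e$, but it is unnecessary for the stated bound, which only needs to be sharp up to arbitrary logarithmic savings.
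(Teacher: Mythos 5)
The paper does not reprove this statement; it is exactly Corollary A of \cite{Sound09} and is quoted as such. Your attempt to rederive it contains a fatal miscalculation at the very first step. With $x=(\log T)^{A}$ one has $\log x = A\log\log T$, so the error term in \eqref{eq:prop:Soundararajan.2009.Proposition.and.Lemma.2} is
\begin{equation*}
\frac{\log T}{\log x}=\frac{\log T}{A\log\log T},
\qquad\text{hence}\qquad
\exp\Bigl(\beta\,\frac{\log T}{\log x}\Bigr)=\exp\Bigl(\frac{\beta\log T}{A\log\log T}\Bigr),
\end{equation*}
which is \emph{not} $(\log T)^{\beta/A}=\exp\bigl(\tfrac{\beta}{A}\log\log T\bigr)$; it grows faster than any fixed power of $\log T$ and destroys the claimed bound. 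You appear to have conflated $\log T/\log x$ with $\log\log T/\log x$. To make this term contribute only $(\log T)^{\OO(\e)}$ you are forced to take $\log x\gg \log T/(\e\log\log T)$, i.e.\ $x$ essentially a (small) power of $T$.

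But then the second half of your argument collapses as well: the orthogonality input (Lemma \ref{lem: moments}, or Lemma \ref{lem:moment.estimates.Lemma.3.Sound.2009.generalisation}) carries an error $\OO(x^{2k}/T)$ and therefore only controls moments of order $k\ll \log T/\log x\asymp \e\log\log T$, whereas the exponential moment of a Dirichlet polynomial of variance $\sim\tfrac12\log\log T$ is governed by its tail at height $V\asymp\tfrac{\beta}{2}\log\log T$, which requires moments of order $k\asymp\tfrac{\beta^2}{4}\log\log T$. This tension between the length of the polynomial and the admissible moment order is precisely why Soundararajan's proof is not a one-shot exponential-moment computation: he estimates $\mathrm{meas}\{t: \log|\zeta(\tfrac12+\ii t)|>V\}$ separately for each level $V$, choosing $x=T^{A/V}$ depending on $V$ and splitting the prime sum at an intermediate point, and then integrates over $V$ to obtain the moment bound. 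Your proposal, as written, cannot be repaired by tuning $A$; it needs to be replaced by that level-by-level large deviation argument (or simply by the citation the paper uses).
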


    \begin{proof}
        See Corollary A in \cite{Sound09}.
    \end{proof}

    \begin{proposition}\label{pr:maksbound}
        Let $0 < \beta \leq 4$ be given. Then,
        \begin{equation}
            \E\big[|\zeta(\tfrac 12 + \ii \tau)|^{\beta}\big] \ll (\log T)^{\beta^2 / 4}.
        \end{equation}
    \end{proposition}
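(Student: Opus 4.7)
The plan is to treat the endpoints via classical moment asymptotics and then to interpolate via Heath-Brown's fractional moment method. At $\beta = 2$, this reduces to the Hardy--Littlewood second moment $\E[|\zeta(\tfrac 12 + \ii \tau)|^2] \asymp \log T$, and at $\beta = 4$ to Ingham's fourth moment $\E[|\zeta(\tfrac 12 + \ii \tau)|^4] \asymp (\log T)^4$. Both follow from the approximate functional equation combined with the mean value theorem for Dirichlet polynomials, and both produce the exponent $\beta^2/4$ exactly via the diagonal contribution.

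For intermediate $\beta \in (0,2) \cup (2,4)$, naive H\"older interpolation in $\beta$ yields only the suboptimal exponents $\beta/2$ on $(0,2]$ and $3\beta/2 - 2$ on $[2,4]$, both strictly larger than the target $\beta^2/4$. To reach the sharp exponent I would follow Heath-Brown's fractional moment method. Introduce a short Dirichlet polynomial mollifier $M(\tfrac 12 + \ii t)$ of length a small power of $T$ whose coefficients approximate those of $\zeta(s)^{-\beta/2}$ (essentially the multiplicative function $\mu \ast d_{\beta/2-1}$-type), and exploit the pointwise decomposition $|\zeta|^\beta = |\zeta M|^{\beta} \cdot |M|^{-\beta}$ via H\"older's inequality. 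The first factor is bounded because $\zeta M \approx 1$ in mean square (a twisted second moment computation); the second factor is a negative moment of a Dirichlet polynomial whose expectation is computed using that the Dirichlet coefficients of $\zeta^{\beta/2}$ are divisor-like functions $d_{\beta/2}(n)$, contributing the sharp $(\log T)^{\beta^2/4}$ from the diagonal in the mean value theorem.

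The main obstacle is the quantitative calibration: the length of $M$ and the H\"older exponents must be chosen so that the diagonal contributions in each factor combine to yield exactly $(\log T)^{\beta^2/4}$, with the off-diagonal terms absorbed into the implied constant. For $\beta$ close to $4$, an additional difficulty is that the relevant negative moment of $M$ could blow up when $|M|$ happens to be small; this is handled by restricting the estimate to an event where $|M|$ is not too small (which has high probability by a second moment of the mollifier) and bounding the complementary event via a cruder Cauchy--Schwarz step against the fourth moment of $\zeta$. These details are carried out in the fractional moment papers of Heath-Brown, and in our setting can be quoted directly from \cite{RSH}.
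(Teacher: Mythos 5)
Your proposal ultimately rests on quoting the sharp unconditional bound for $0<\beta\leq 4$ from \cite{RSH}, which is exactly what the paper does: its entire proof is the citation to Theorem 1 of \cite{RSH}. The intervening sketch is therefore not load-bearing --- and, for what it is worth, the argument in \cite{RSH} proceeds via a Young's-inequality decomposition against $e^{\Re \mathcal{P}}$ and twisted second/fourth moments (the same device used in Proposition \ref{pr:momenttwi} of this paper) rather than the Heath-Brown divisor-mollifier route you describe --- but since the result is quoted rather than reproved, this does not affect correctness.
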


    \begin{proof}
        See Theorem 1 in \cite{heap-radziwill-soundararajan}.
    \end{proof}

    The proof of Proposition~\ref{pr:soundbound} is based on the following deterministic upper bound for
    $\zeta$: Suppose that $T$ is large. Let $T \leq t \leq 2T$, and let $2 \leq x \leq T^2$.
    Then, as $T\to\infty$, we have
    \begin{equation}\label{eq:prop:Soundararajan.2009.Proposition.and.Lemma.2}
        \log |\zeta(\tfrac{1}{2} + \ii t)| \leq \mathrm{Re} \, \sum_{p \leq x} \frac{1}{p^{\frac{1}{2} + \frac{1}{\log x} + \ii t}} \frac{\log(x / p)}{\log x} +  \frac{\log T}{\log x} + \OO(\log \log \log T),
    \end{equation}
    see Proposition and Lemma 2 in \cite{Sound09}.
    On the Riemann hypothesis, the upper bounds in Theorem~\ref{thm: freezing} and Theorem~\ref{thm: max} could be proved in a simpler way by using this deterministic bound, and by proving the corresponding results for the Dirichlet polynomials. For unconditional results, such a deterministic upper bound is not available. We need to work {\it on average} to discard the contribution of large primes. This is the purpose of
    Lemmas~\ref{le:1nd},~\ref{le:2nd},~\ref {le:3rd} and Proposition~\ref{pr:momenttwi} below.

    \vspace{3mm}
    In order to compute the moments of $\zeta\cdot e^{-\mathcal P_{|\theta|}}$, we will need to express $e^{-\mathcal P_{|\theta|}}$ as a finite Dirichlet polynomial.
    To this aim, notice that if $|z|\leq \nu/10$ for some $\nu\in \N$, we have
    $
    \big|e^z - \sum_{j=0}^{\nu} \frac{z^j}{j!}\big| \leq e^{-\nu}.
    $
    Consider more generally $e^{\lambda\mathcal P(s)}$ with $\lambda\in \C$ and $\mathcal P(s)=\sum_{p\leq X}a(p)p^{-s}$ for some completely multiplicative function $a$.
    We have by the above, assuming $|\lambda\mathcal P(s)|\leq \nu/10$ for some $\nu\in \N$, and by the multinomial formula, that
    \vspace{1mm}
    \begin{equation}\label{eqn: truncation}
        \bigg|e^{\lambda\mathcal P(s)} - \sum_{k=0}^\nu \frac{\lambda^k}{k!}\Big(\sum_{p\leq X} \frac{a(p)}{p^s}\Big)^k\bigg|
        = \bigg|e^{\lambda\mathcal P(s)} - \hspace{-2mm}\sum_{\substack{\Omega(n)\leq \nu\\ p\mid n\Longrightarrow p\leq X}}\hspace{-2mm}\frac{\lambda^{\Omega(n)}a(n)\mathfrak g(n)}{n^s}\bigg| \leq e^{-\nu},
    \end{equation}
    where $\Omega(n)$ is the number of prime factors of $n$ with multiplicity.
    Here, $\mathfrak g$ is the multiplicative function defined by $\mathfrak g(p^k)=1/k!$ for all integers $k$ and primes $p$.

    \newpage
    The relevant function $a$ for $e^{-\mathcal P_{|\theta|}}$ will be of the following form:
    Given $\alpha, \beta \in \mathbb{R}$ and $\theta > -1$, let $\mathfrak{F}_{\alpha, \beta, \theta}(n)$ denote a completely multiplicative function such that
    \begin{equation}\label{eq:multiplicative.function}
        \mathfrak{F}_{\alpha, \beta, \theta}(p) \leqdef
        \begin{cases}
            \alpha, &\mbox{if } \log p \leq \log^{|\theta|} T, \\[1mm]
            \beta, &\mbox{if } \log^{|\theta|} T < \log p.
        \end{cases}
    \end{equation}

    In the next three lemmas, we control various terms with the aim of proving the moment estimate in Proposition~\ref{pr:momenttwi}, which we will need in the case of short intervals.

	\begin{lemma}\label{le:1nd}
		Let $- 1 < \theta < 0$, $\beta > 0$ and $\e > 0$ be given.
        Then,
		\begin{equation}\label{eq:le:1nd}
            \E\bigg[\Big|\sum_{\substack{\Omega(n) \leq 100 \lfloor \log\log T \rfloor \\ p | n \implies \log p \leq \log^{1 - \e} T}} \frac{\mathfrak{F}_{0, \beta / 2, \theta}(n) \mathfrak{g}(n)}{n^{1/2 + \ii \tau}}\Big|^2\bigg] \ll (\log T)^{\beta^2 ( 1 + \theta) / 4}.
		\end{equation}
	\end{lemma}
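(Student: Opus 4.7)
The plan is a standard second-moment / Euler product calculation. Writing $c_n = \mathfrak{F}_{0,\beta/2,\theta}(n)\mathfrak{g}(n)$ and expanding the square, one gets
\[
\E\bigg[\Big|\sum_{n} \frac{c_n}{n^{1/2+\ii\tau}}\Big|^2\bigg] = \sum_{m,n} \frac{c_n\overline{c_m}}{(nm)^{1/2}}\,\E\big[(m/n)^{\ii\tau}\big],
\]
where the sums are over $n,m$ in the support specified in the lemma. Since $\tau$ is uniform on $[T,2T]$, one has $\E[(m/n)^{\ii\tau}] = 1$ when $m = n$, and $\E[(m/n)^{\ii\tau}] \ll 1/(T|\log(m/n)|)$ otherwise, which is the orthogonality input.

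The first step is to rule out the off-diagonal contribution. The constraints $\Omega(n)\le 100\lfloor\log\log T\rfloor$ and $\log p\le\log^{1-\e}T$ force
\[
\log n \le 100\lfloor\log\log T\rfloor \cdot \log^{1-\e} T = \oo(\log T),
\]
hence $n,m \le T^{\oo(1)}$. Combined with the elementary bound $|\log(m/n)|\gg 1/(mn)$ for distinct integers, the off-diagonal contributes $\ll T^{-1+\oo(1)}$ (using e.g.\ a Montgomery--Vaughan type large-sieve estimate, or simply a crude pair count). Hence the expectation reduces to the diagonal $\sum_n c_n^2/n$.

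The second step is to evaluate the diagonal. Since all terms are nonnegative, one may drop the $\Omega$-restriction to obtain the upper bound
\[
\sum_n \frac{c_n^2}{n} \le \prod_{\log^{|\theta|}T < \log p \le \log^{1-\e}T} \sum_{k=0}^{\infty}\frac{(\beta/2)^{2k}}{(k!)^2 \, p^k},
\]
where the primes $p$ with $\log p \le \log^{|\theta|}T$ drop out because $\mathfrak{F}_{0,\beta/2,\theta}$ vanishes there. Each Euler factor equals $1 + (\beta/2)^2/p + \OO_\beta(p^{-2})$, so taking logarithms and applying Mertens' theorem gives
\[
\log\prod_p \le \frac{\beta^2}{4}\!\!\!\sum_{\log^{|\theta|}T < \log p \le \log^{1-\e}T}\!\!\!\frac{1}{p} + \OO(1) = \frac{\beta^2}{4}(1 - \e - |\theta|)\log\log T + \OO(1).
\]
Using $|\theta| = -\theta$ (as $\theta<0$), this is $\tfrac{\beta^2}{4}(1+\theta-\e)\log\log T+\OO(1)$, hence the diagonal is $\ll (\log T)^{\beta^2(1+\theta-\e)/4} \le (\log T)^{\beta^2(1+\theta)/4}$, as claimed.

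The main technical obstacle is ensuring that the off-diagonal orthogonality error and the $\OO(p^{-2})$ tail of the Euler factor are uniformly negligible compared to the main term; once the size bound $n = T^{\oo(1)}$ is in hand, both are routine. The key conceptual input is Mertens' theorem on the restricted prime range $[\exp(\log^{|\theta|}T), \exp(\log^{1-\e}T)]$, which is precisely what produces the factor $(1+\theta)$ in the exponent and explains why the multiplicative function $\mathfrak{F}_{0,\beta/2,\theta}$ was designed to kill the small primes.
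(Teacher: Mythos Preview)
Your proof is correct and follows essentially the same approach as the paper: both use the mean-value estimate for Dirichlet polynomials (the paper cites it as Lemma~\ref{lem:ABBRS.2018.Lemma.3.3}, you unpack it by hand), drop the $\Omega$-restriction by positivity, and evaluate the resulting Euler product via Mertens on the range $(\exp(\log^{|\theta|}T),\,\exp(\log^{1-\e}T)]$. The only cosmetic difference is that you spell out the off-diagonal bound explicitly using $n,m\le T^{\oo(1)}$, whereas the paper absorbs this into the cited mean-value lemma.
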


	\begin{proof}
        Notice that the Dirichlet polynomial in \eqref{eq:le:1nd} has length $ \ll T^{\delta}$ for any fixed $\delta > 0$. In particular, by the mean-value formula (Lemma~\ref{lem:ABBRS.2018.Lemma.3.3}),
		\begin{equation*}
            \E\bigg[\Big|\sum_{\substack{\Omega(n) \leq 100 \lfloor \log\log T \rfloor \\ p | n \implies \log p \leq \log^{1 - \e} T}} \frac{\mathfrak{F}_{0, \beta / 2, \theta}(n) \mathfrak{g}(n)}{n^{1/2 + \ii \tau}}\Big|^2\bigg] \ll \sum_{\substack{\Omega(n) \leq 100 \lfloor \log\log T \rfloor \\ p | n \implies \log p \leq \log^{1 - \e} T}} \frac{\mathfrak{F}_{0, \beta / 2, \theta}(n)^2 \mathfrak{g}(n)^2}{n}.
		\end{equation*}
        Dropping the restriction on $\Omega(n)$ and expressing the sum as an Euler product yield
        \begin{equation}
            \sum_{\substack{ p | n \implies \log p \leq \log^{1 - \e} T}} \frac{\mathfrak{F}_{0, \beta / 2, \theta}(n)^2 \mathfrak{g}(n)^2}{n}
            = \prod_{\log p \leq \log^{1 - \e} T} \bigg(1 + \sum_{k\geq 1} \frac{\mathfrak{F}_{0,\beta/2,\theta}(p)^{2k} \mathfrak{g}(p^k)^2}{p^k}\bigg).
        \end{equation}
        The logarithm of the right-hand side is easily evaluated using the prime number theorem (see Lemma~\ref{lem:PNT.estimates}) and is $ (\beta^2 (1 + \theta)/4) \log\log T +\OO(1)$. This proves the claimed bound.
 	\end{proof}

    \begin{lemma}\label{le:2nd}
    	Let $-1 < \theta < 0$, $0 < \beta \leq 2$ and $\e > 0$ be given. Then,
    	\begin{equation}\label{eq:le:2nd}
            \E\bigg[|\zeta(\tfrac 12 + \ii \tau)|^2 \cdot \Big | \sum_{\substack{\Omega(n) \leq 100 \lfloor \log\log T \rfloor \\ p | n \implies \log p \leq \log^{1 - \e} T}} \frac{\mathfrak{F}_{-1,\beta / 2 - 1, \theta}(n)\mathfrak{g}(n)}{n^{1/2 + \ii \tau}} \Big |^2\bigg] \ll (\log T)^{\beta^2 ( 1 + \theta) / 4 + \e}.
    	\end{equation}
    \end{lemma}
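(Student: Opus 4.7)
My plan is to combine a twisted second moment estimate for $\zeta$ on the critical line with an Euler product computation calibrated by the prime number theorem. Writing $A(s) = \sum_n a(n) n^{-s}$ for the Dirichlet polynomial appearing on the right of \eqref{eq:le:2nd}, the joint restriction $\Omega(n) \leq 100\lfloor\log\log T\rfloor$ together with $\log p \leq \log^{1-\e}T$ gives $\log n \leq 100\log\log T \cdot \log^{1-\e}T$, so that $A$ has length $T^{\oo(1)}$. This is well within the range where, via the approximate functional equation for $\zeta$ followed by the mean-value theorem for Dirichlet polynomials (in the spirit of Lemma \ref{lem:ABBRS.2018.Lemma.3.3}), one has
\begin{equation*}
\E\big[|\zeta(\tfrac 12 + \ii \tau)|^2 |A(\tfrac 12 + \ii \tau)|^2\big] \ll (\log T) \sum_{m, n} \frac{a(m)\overline{a(n)} (m,n)}{mn}.
\end{equation*}

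To evaluate the arithmetic sum I would first drop the truncation $\Omega(n) \leq 100\lfloor\log\log T\rfloor$: the tail is a Poisson-type sum that, thanks to the $1/k!$ contribution from $\mathfrak{g}(p^k)$, decays super-exponentially and loses at most a constant factor. Once this truncation is removed, the coefficient sequence $a$ is multiplicative and the sum factors as an Euler product with local factor
\begin{equation*}
\sum_{j, k \geq 0} \frac{\alpha_p^{j+k} \min(p^j, p^k)}{j!\,k!\,p^{j+k}} = 1 + \frac{\alpha_p(\alpha_p + 2)}{p} + \OO(p^{-2}),
\end{equation*}
where $\alpha_p \leqdef \mathfrak{F}_{-1, \beta/2 - 1, \theta}(p)$. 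Because $(-1)(-1+2) = -1$ and $(\beta/2 - 1)(\beta/2 + 1) = \beta^2/4 - 1$, the prime number theorem (Lemma \ref{lem:PNT.estimates}) gives the logarithm of the Euler product as
\begin{equation*}
\theta \log\log T + \big(\tfrac{\beta^2}{4} - 1\big)(1 + \theta - \e)\log\log T + \OO(1) = \Big(\tfrac{\beta^2(1+\theta)}{4} - 1 + \OO(\e)\Big)\log\log T + \OO(1),
\end{equation*}
and multiplying by the $\log T$ prefactor yields the claimed bound (with an arbitrarily small $\e'$ in place of $\e$ in the statement).

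I expect the main obstacle to be invoking the twisted second moment cleanly. Although the $T^{\oo(1)}$ length of $A$ leaves ample room, one must still expand $|\zeta|^2$ via its approximate functional equation at length $\sim \sqrt{T}$, form the product with $|A|^2$, and check that the off-diagonal contributions of the mean-value theorem and the secondary $\log((m,n)^2/mn)$ terms arising from the diagonal do not spoil the $\log T$ prefactor. These secondary terms only shrink the Euler sum, so bounding them by the main term is harmless but must be carried out explicitly. Tracking signs is also important, since the coefficient $\alpha_p = -1$ on small primes produces the negative contribution $\theta \log\log T$ that is crucial for getting the correct $(1+\theta)$ factor in the final answer.
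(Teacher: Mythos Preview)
Your overall strategy matches the paper's: apply a twisted second moment formula, remove the $\Omega(n)$ truncation, and evaluate the resulting Euler product via Lemma~\ref{lem:PNT.estimates}. Your local-factor computation and the final arithmetic are both correct.

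The gap is in your treatment of the ``secondary'' $\log((m,n)^2/mn)$ contribution. Your displayed starting inequality
\[
\E\big[|\zeta(\tfrac 12+\ii\tau)|^2\,|A(\tfrac 12+\ii\tau)|^2\big] \ll (\log T)\sum_{m,n}\frac{a(m)\overline{a(n)}\,(m,n)}{mn}
\]
is not available as stated. What the twisted second moment actually produces (the paper cites Theorem~1 of Bettin--Chandee--Radziwi{\l}{\l}) is a sum weighted by $\log\big(T(m,n)^2/(2\pi mn)\big)+2\gamma$, i.e.\ a $\log T$ piece plus a $\log((m,n)^2/(mn))$ piece. Your claim that the latter ``only shrinks the Euler sum'' is false here: the coefficients $a(n)=\mathfrak F_{-1,\beta/2-1,\theta}(n)\mathfrak g(n)$ are \emph{signed} (indeed $\alpha_p=-1$ on the small primes is the whole point), so terms with $a(m)a(n)<0$ combine with $\log((m,n)^2/(mn))\le 0$ to give a \emph{positive} contribution. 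Thus the secondary piece can be of the same order as the main $\log T$ piece and must be bounded on its own, not absorbed by a sign argument.

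The paper handles this by writing $\log((m,n)^2/(mn))=\tfrac{1}{2\pi\ii}\oint_{|z|=1/\log T}\big((m,n)^2/(mn)\big)^z\,z^{-2}\,\rd z$, which turns the secondary sum back into (a small perturbation of) the same Euler product you already computed, times an extra $\log T$ coming from the contour. This yields the same bound $(\log T)^{\beta^2(1+\theta)/4+\e}$ as the main term. Once you replace your sign heuristic by this contour argument (or any equivalent way of controlling the $\log$-weighted sum with signed coefficients), your proof goes through. Your plan for removing the truncation is morally correct but should be made precise via a Rankin/Chernoff bound with $|\mathfrak F|$ in place of $\mathfrak F$, using $|\mathfrak F_{-1,\beta/2-1,\theta}(p)|\le 1$ for $0<\beta\le 2$; this is exactly what the paper does.
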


    \begin{proof}
    	By Theorem 1 in \cite{BettinChandeeRadziwill}, the left-hand side of \eqref{eq:le:2nd} is
    	\begin{equation}\label{eq:le:2nd.beginning}
            \begin{aligned}
                &\leq \frac{1}{T} \int_{\R} \sum_{\substack{\Omega(n) \leq 100 \lfloor \log \log T \rfloor \\ \Omega(m) \leq 100 \lfloor \log\log T \rfloor \\ p | n \implies \log p \leq \log^{1 - \e} T \\ p | m \implies \log p \leq \log^{1 - \e} T}} \hspace{-7mm}\frac{\mathfrak{F}_{-1, \beta / 2 - 1, \theta}(n m)\mathfrak{g}(n) \mathfrak{g}(m)}{[n,m]} \, \log \Big ( \frac{(n,m)^2}{2\pi n m} \Big ) \Phi \Big ( \frac{t}{T} \Big ) \rd t \\
                &\quad + \frac{1}{T} \int_{\R} \sum_{\substack{\Omega(n) \leq 100 \lfloor \log \log T \rfloor \\ \Omega(m) \leq 100 \lfloor \log\log T \rfloor \\ p | n \implies \log p \leq \log^{1 - \e} T \\ p | m \implies \log p \leq \log^{1 - \e} T}} \hspace{-7mm}\frac{\mathfrak{F}_{-1, \beta / 2 - 1, \theta}(n m)\mathfrak{g}(n) \mathfrak{g}(m)}{[n,m]} \big(\log t + 2 \gamma\big ) \Phi \Big ( \frac{t}{T} \Big ) \rd t
                + \OO(T^{- \e}),
            \end{aligned}
    	\end{equation}
        where $\Phi$ is a smooth non-negative function such that $\Phi(x) \geq 1$ for all $1 \leq x \leq 2$, with support contained in say $[0,3]$, and $(n,m)$ and $[n,m]$ stand for the greatest common divisor and the least common multiple, respectively.

        We first note that if $n,m$ have the prime factorization $n = \prod_{i=1}^r p_i^{\alpha_i}$ and $m = \prod_{i=1}^r p_i^{\beta_i}$, where the $\alpha_i$'s and $\beta_i$'s are possibly $0$, then $[n,m] = \prod_{i=1}^r p_i^{\alpha_i \vee \beta_i}$.
        This means that if $a(n)$ and $b(m)$ are two bounded multiplicative functions, we have
        \begin{equation}\label{eqn: pairs}
            \sum_{\substack{p | n \implies p \leq X \\ p | m \implies p \leq X}} \frac{a(n)b(m)}{[n,m]} = \prod_{p \leq X} \bigg(1 + \sum_{k\geq 1} p^{-k} \sum_{i,j:\max(i,j) = k} a(p^i) b(p^j)\bigg).
        \end{equation}

        Using Chernoff's bound, we can get rid of the restriction $\Omega(n) \leq 100 \lfloor \log\log T \rfloor$ in \eqref{eq:le:2nd.beginning}.
        It suffices to notice that the contribution of each sum over $n$ with $\Omega(n) > 100 \lfloor \log\log T \rfloor$ is
    	\begin{equation}
            \begin{aligned}
                &\ll \log T \sum_{\substack{p | n \implies p \leq T \\ p | m \implies p \leq  T}} \frac{|\mathfrak{F}_{-1, \beta / 2 - 1, \theta}(nm)| \mathfrak{g}(n) \mathfrak{g}(m)}{[n,m]} e^{ \Omega(n) - 100 \log\log T} \\
                &\ll (\log T)^{-99} \prod_{p \leq T} \Big ( 1 + \frac{(1 + e) |\mathfrak{F}_{-1, \beta / 2 - 1, \theta}(p)|}{p} + \frac{e\cdot |\mathfrak{F}_{-1, \beta/2 - 1, \theta}(p)|^2}{p}\Big ) \\[2mm]
                &\ll (\log T)^{-99 } \cdot (\log T)^{1 + 2 e} = \oo(1),
            \end{aligned}
    	\end{equation}
        where we used \eqref{eqn: pairs} with $a(n) = |\mathfrak{F}_{-1, \beta / 2 - 1, \theta}(n)| \mathfrak{g}(n) e^{\Omega(n)}$, $b(m) = |\mathfrak{F}_{-1, \beta / 2 - 1, \theta}(m)| \mathfrak{g}(m)$.
        The contribution of each sum over $m$ with $\Omega(m) > 100 \lfloor \log\log T \rfloor$ can be removed in the same manner.

        Considering the sums in \eqref{eq:le:2nd.beginning} without the restriction on $\Omega(n)$ and $\Omega(m)$, we get by \eqref{eqn: pairs} and Lemma~\ref{lem:PNT.estimates},
    	\begin{equation}\label{eq:le:2nd.middle}
            \begin{aligned}
                &\sum_{\substack{p | n \implies \log p \leq \log^{1 - \e} T \\ p | m \implies \log p \leq \log^{1 - \e} T}} \frac{\mathfrak{F}_{-1, \beta / 2 - 1, \theta}(n m)\mathfrak{g}(n) \mathfrak{g}(m)}{[n,m]} \\[-1mm]
                &\hspace{10mm}\asymp \prod_{\log p \leq \log^{1 - \e} T} \Big ( 1 + \frac{2 \mathfrak{F}_{-1, \beta / 2 - 1, \theta}(p) + \mathfrak{F}_{-1, \beta / 2 - 1, \theta}(p)^2}{p} \Big ) \\
                &\hspace{10mm}\asymp (\log T)^{-|\theta|} \cdot (\log T)^{(\beta^2 / 4 - 1) \cdot (1 + \theta - \e)} \\[1mm]
                &\hspace{10mm}\ll (\log T)^{\beta^2 (1 + \theta)/4 - 1 + \e}.
            \end{aligned}
    	\end{equation}
        In particular, this means that the second integral in \eqref{eq:le:2nd.beginning} is $\ll (\log T)^{\beta^2 (1 + \theta)/4 + \e}$.
    	
    	To evaluate the first integral in \eqref{eq:le:2nd.beginning} , write
        \begin{equation}
            \log \Big ( \frac{(m,n)^2}{m n} \Big ) = \frac{1}{2\pi \ii} \oint_{|z| = 1 / \log T} \Big ( \frac{(m,n)^2}{m n} \Big )^{z} \cdot \frac{\rd z}{z^2}.
        \end{equation}
        Then, we end up having to evaluate
        \begin{equation}\label{eq:finalizer}
            \frac{1}{2\pi \ii} \oint_{|z| = 1 / \log T} \sum_{\substack{ p | n \implies \log p \leq \log^{1 - \varepsilon} T \\ p | m \implies \log p \leq \log^{1 - \varepsilon} T}} \frac{\mathfrak{F}_{-1, \beta / 2 - 1, \theta}(m n) \mathfrak{g}(m) \mathfrak{g}(n)}{[m, n]} \, \Big ( \frac{(m,n)^2}{m n} \Big )^{z} \cdot \frac{\rd z}{z^2}.
        \end{equation}
        As above, the sum over $m$ and $n$ factors into an Euler product which is
        \begin{equation}\label{eq:bounder}
            = \prod_{\log p \leq \log^{1 - \varepsilon} T} \left(
            \begin{array}{l}
                1 + \frac{2 \mathfrak{F}_{-1,\beta/2 - 1,\theta}(p)}{p^{1 + z}} + \frac{\mathfrak{F}_{-1,\beta/2 - 1,\theta}(p)^2}{p} \\[1mm]
                + \sum_{\substack{\scriptscriptstyle i,j\geq 0 \, : \\ \scriptscriptstyle (i \vee j) \geq 2}} \frac{\mathfrak{F}_{-1,\beta/2 - 1,\theta}(p)^{i+j}}{i! j!} \frac{p^{(i \wedge j) z}}{p^{(i \vee j) (1 + z)}}
            \end{array}
            \right).
        \end{equation}
        For $|z| = 1/\log T$, note that
        \begin{equation}
            \Bigg|\sum_{\substack{i,j\geq 0 \, :\\ (i \vee j) \geq 2}} \frac{\mathfrak{F}_{-1,\beta/2 - 1,\theta}(p)^{i+j}}{i! j!} \frac{p^{(i \wedge j) z}}{p^{(i \vee j) (1 + z)}}\Bigg| \leq \frac{1}{p^2} \sum_{i,j\geq 0} \frac{1}{i! j!} \leq \frac{e^2}{p^2},
        \end{equation}
        and a Taylor expansion yields
        \begin{equation}\label{eq:bounder.next.Taylor}
            \begin{aligned}
                &\sum_{\log p \leq \log^{1 - \varepsilon} T} \hspace{-1mm} \frac{\mathfrak{F}_{-1, \beta / 2 - 1, \theta}(p)}{p^{1 + z}} \\
                &\qquad= \sum_{\log p \leq \log^{1 - \varepsilon} T} \hspace{-1mm} \frac{\mathfrak{F}_{-1, \beta / 2 - 1, \theta}(p)}{p} \, + \, \OO\bigg(\frac{1}{\log T} \sum_{\log p \leq \log^{1 - \varepsilon} T} \frac{\log p}{p} \bigg).
            \end{aligned}
        \end{equation}
        Since the error term in \eqref{eq:bounder.next.Taylor} is $\oo(1)$ by Lemma~\ref{lem:PNT.estimates}, the Euler product in \eqref{eq:bounder} is
        \begin{equation}
            \begin{aligned}
                &\asymp \prod_{\log p \leq \log^{1 - \varepsilon} T} \Big ( 1 + \frac{2 \mathfrak{F}_{-1, \beta / 2 - 1, \theta}(p) + \mathfrak{F}_{-1, \beta / 2 - 1, \theta}(p)^2}{p} + \OO(p^{-2})\Big ) \\
                &\ll ( \log T )^{\beta^2 (1 + \theta) / 4 - 1 + \varepsilon}.
            \end{aligned}
        \end{equation}
        By putting this estimate back in the contour integral and using a trivial bound on $z^{-2}$, Equation~\eqref{eq:finalizer} is $\ll (\log T)^{\beta^2 (1 + \theta) / 4 + \varepsilon}$ as required.
    \end{proof}

    \begin{lemma}\label{le:3rd}
    	Let $\e > 0$ be given. For $\ell = 2 \lfloor \log\log T \rfloor$, we have
		\begin{equation}\label{eq:le:3rd.eq.1}
            \E\bigg[|\zeta(\tfrac 12 + \ii \tau)|^2 \cdot \Big | \frac{\mathcal{P}_{1-\e}(\tfrac{1}{2} + \ii \tau)}{5 \log\log T} \Big |^{2\ell}\bigg] \ll (\log T)^{-2},
		\end{equation}
		and
		\begin{equation}\label{eq:le:3rd.eq.2}
			\E\bigg[\Big | \frac{\mathcal{P}_{1-\e}(\tfrac{1}{2} + \ii \tau)}{5 \log\log T} \Big |^{2\ell}\bigg] \ll (\log T)^{-4}.
		\end{equation}
	\end{lemma}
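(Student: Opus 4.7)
The plan is to reduce both estimates to a high-moment bound on the pure Dirichlet polynomial $\mathcal{P}_{1-\e}$ evaluated via the mean-value theorem, and then handle the $|\zeta|^2$ weight in \eqref{eq:le:3rd.eq.1} by a Cauchy--Schwarz step that appeals to Proposition \ref{pr:maksbound}.

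First I would rewrite $|\mathcal{P}_{1-\e}|^{2\ell} = |\mathcal{P}_{1-\e}^{\ell}|^{2}$ using the multinomial theorem:
\begin{equation*}
    \mathcal{P}_{1-\e}(s)^{\ell} = \ell!\sum_{\substack{\Omega(n)=\ell\\ p\mid n\Rightarrow \log p\leq \log^{1-\e}T}} \frac{\mathfrak{g}(n)}{n^{s}},
\end{equation*}
with $\mathfrak{g}$ the completely multiplicative function of \eqref{eqn: truncation}. The length of this polynomial is at most $\exp(\ell\log^{1-\e}T)=T^{\oo(1)}$ because $\ell=50\lfloor\log\log T\rfloor$, so the Dirichlet mean-value estimate (Lemma \ref{lem:ABBRS.2018.Lemma.3.3}) gives
\begin{equation*}
    \E\big[|\mathcal{P}_{1-\e}(\tfrac12+\ii\tau)|^{2\ell}\big] \ll (\ell!)^{2}\sum_{\substack{\Omega(n)=\ell\\ p\mid n\Rightarrow \log p\leq\log^{1-\e}T}} \frac{\mathfrak{g}(n)^{2}}{n}.
\end{equation*}
Since $\mathfrak{g}(p^{k})^{2}=1/(k!)^{2}\leq 1/k!=\mathfrak{g}(p^{k})$, we may replace $\mathfrak{g}(n)^{2}$ by $\mathfrak{g}(n)$, and then the multinomial identity $\sum_{\Omega(n)=\ell}\mathfrak{g}(n)/n=\frac{1}{\ell!}(\sum 1/p)^{\ell}$ combined with Mertens' theorem (Lemma \ref{lem:PNT.estimates}) giving $\sum_{\log p\leq\log^{1-\e}T}1/p=\log\log T+\OO(1)$ produces
\begin{equation*}
    \E\big[|\mathcal{P}_{1-\e}(\tfrac12+\ii\tau)|^{2\ell}\big] \ll \ell!\,(\log\log T+\OO(1))^{\ell}.
\end{equation*}

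Dividing by $(100\log\log T)^{2\ell}$ and using the crude Stirling bound $\ell!\leq \ell^{\ell}=(50\log\log T)^{\ell}$ gives a right-hand side bounded by $(200)^{-\ell(1+\oo(1))} = (\log T)^{-50\log 200 + \oo(1)}$, which is $\ll (\log T)^{-21}$ and establishes \eqref{eq:le:3rd.eq.2}. For \eqref{eq:le:3rd.eq.1} I would apply Cauchy--Schwarz,
\begin{equation*}
    \E\Big[|\zeta(\tfrac12+\ii\tau)|^{2}\cdot\big|\tfrac{\mathcal{P}_{1-\e}}{100\log\log T}\big|^{2\ell}\Big] \leq \E\big[|\zeta|^{4}\big]^{1/2}\,\E\Big[\big|\tfrac{\mathcal{P}_{1-\e}}{100\log\log T}\big|^{4\ell}\Big]^{1/2},
\end{equation*}
bound $\E[|\zeta|^{4}]\ll(\log T)^{4}$ by Proposition \ref{pr:maksbound}, and rerun the argument above with $\ell$ replaced by $2\ell$ to get $\E[|\mathcal{P}_{1-\e}/(100\log\log T)|^{4\ell}]\ll 100^{-2\ell}=(\log T)^{-100\log 100}$. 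Multiplying yields $(\log T)^{2-50\log 100}\ll(\log T)^{-21}$.

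The only thing to verify carefully is the mean-value hypothesis at the $4\ell$-th power step: the polynomial $\mathcal{P}_{1-\e}^{2\ell}$ has length at most $\exp(2\ell\log^{1-\e}T)=\exp(\OO(\log\log T\cdot\log^{1-\e}T))=T^{\oo(1)}$, so Lemma \ref{lem:ABBRS.2018.Lemma.3.3} applies without difficulty. There is no real obstacle in this lemma; the entire content is the favorable combinatorics that lets the factor $\ell!\sim\ell^{\ell}$ be absorbed by $(100\log\log T)^{2\ell}$ because $\ell=50\log\log T$ is small compared to $(100\log\log T)^{2}=(100/50)^{2}\ell^{2}$, leaving an exponentially small remainder of the form $C^{-\ell}$ that gives an arbitrary negative power of $\log T$.
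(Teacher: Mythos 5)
Your proposal is correct and follows essentially the same route as the paper: Cauchy--Schwarz against the fourth moment of $\zeta$ (Proposition \ref{pr:maksbound}) for \eqref{eq:le:3rd.eq.1}, and a high-moment bound $\E[|\mathcal{P}_{1-\e}|^{2k}]\ll k!\,(\log\log T+\OO(1))^{k}$ whose factorial is beaten by the normalization $(100\log\log T)^{2\ell}$. The only cosmetic difference is that you re-derive that moment bound from the multinomial expansion and the mean-value formula (Lemma \ref{lem:ABBRS.2018.Lemma.3.3}), whereas the paper simply cites Lemma \ref{lem:moment.estimates.Lemma.3.Sound.2009.generalisation}, which is proved in exactly that way.
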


	\begin{proof}
        First, we apply a moment estimate (Lemma~\ref{lem:moment.estimates.Lemma.3.Sound.2009.generalisation}) followed by a prime number theorem estimate (Lemma~\ref{lem:PNT.estimates}) to obtain
		\begin{equation}\label{eq:le:3rd.eq.1.in.proof}
            \E\bigg[\Big | \frac{\mathcal{P}_{1-\e}(\tfrac{1}{2} + \ii \tau)}{5 \log\log T} \Big |^{4\ell}\bigg] \ll \frac{(2\ell)! \big(\sum_{p \leq T} p^{-1}\big)^{2\ell}}{5^{4\ell} (\log \log T)^{4\ell}} \ll \sqrt{\ell} \Big(\frac{4 \cdot 2}{25 e}\Big)^{2\ell} \ll e^{-4\ell} \ll (\log T)^{-8}.
		\end{equation}
        The estimate \eqref{eq:le:3rd.eq.1} then follows by applying the Cauchy-Schwarz inequality, the fourth moment bound $\E[|\zeta(\tfrac 12 + \ii \tau)|^4] \ll (\log T)^4$, see e.g.\ \cite{MR1575391}, and \eqref{eq:le:3rd.eq.1.in.proof}.
        For \eqref{eq:le:3rd.eq.2}, the same reasoning as in \eqref{eq:le:3rd.eq.1.in.proof} yields the estimate $\ll e^{-2\ell} \ll (\log T)^{-4}$.
	\end{proof}	

   The last three lemmas show a moment bound of the right order for  $\zeta\cdot e^{-\mathcal P_{|\theta|}}$.
    \begin{proposition}\label{pr:momenttwi}
        Let $-1 < \theta < 0$, $0 < \beta \leq 2$ and $\e > 0$ be given. Then, as $T \rightarrow \infty$,
        \begin{equation}
            \E\Big[\big|(\zeta \cdot e^{-\mathcal{P}_{|\theta|}})(\tfrac 12 + \ii \tau)\big|^{\beta} \bb{1}_{\mathcal{A}(T)}\Big] \ll (\log T)^{\beta^2 (1 + \theta) / 4 + \e},
        \end{equation}
        with the event
        \begin{equation}
            \mathcal{A}(T) =\big\{|\mathcal{P}_{|\theta|}(\tfrac{1}{2} + \ii \tau)| \leq 2 \log\log T\big\}. \vspace{2mm}
        \end{equation}
    \end{proposition}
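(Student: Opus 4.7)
The plan is to reduce the moment bound to the second-moment estimate of Lemma \ref{le:2nd} via a H\"older inequality with conjugate exponents $2/\beta$ and $2/(2-\beta)$ (finite since $0<\beta\leq 2$), using a $\beta$-dependent twist designed so that the resulting second moment matches the Dirichlet polynomial in Lemma \ref{le:2nd}.

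First, on $\mathcal{A}(T)$, the truncation identity \eqref{eqn: truncation} with $\lambda=-1$, $\mathcal{P}=\mathcal{P}_{|\theta|}$ and $\nu=50\lfloor\log\log T\rfloor$ applies, since $|\mathcal{P}_{|\theta|}(\tfrac12+\ii\tau)|\leq 2\log\log T\leq \nu/10$ there. This replaces $e^{-\mathcal{P}_{|\theta|}(\tfrac12+\ii\tau)}$ by a Dirichlet polynomial $D_0(\tfrac12+\ii\tau)$ with coefficients $(-1)^{\Omega(n)}\mathfrak{g}(n)$ supported on $n$ with $\log p\leq \log^{|\theta|}T$ for all $p\mid n$, up to an additive error $e^{-\nu}=(\log T)^{-50+\oo(1)}$. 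Using $|a+b|^\beta\ll|a|^\beta+|b|^\beta$ and Proposition \ref{pr:maksbound}, this error contributes only $\oo(1)$ to the moment, so it suffices to bound $\E[|\zeta D_0|^\beta\bb{1}_{\mathcal{A}(T)}]$.

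To restrict the size of $\mathcal{P}_{1-\e}$, I introduce the auxiliary event $\mathcal{B}(T)=\{|\mathcal{P}_{1-\e}(\tfrac12+\ii\tau)|\leq 100\log\log T\}$. On $\mathcal{A}\cap\mathcal{B}^c$, Markov's inequality $\bb{1}_{\mathcal{B}^c}\leq(|\mathcal{P}_{1-\e}|/(100\log\log T))^{2\ell}$ with $\ell=50\lfloor\log\log T\rfloor$, the trivial bound $|e^{-\mathcal{P}_{|\theta|}}|^\beta\leq(\log T)^{2\beta}$ valid on $\mathcal{A}$, and Lemma \ref{le:3rd} show that this contribution is $\oo(1)$. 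On $\mathcal{A}\cap\mathcal{B}$, I set $\widetilde N(s)\leqdef\exp((\beta/2-1)(\mathcal{P}_{1-\e}(s)-\mathcal{P}_{|\theta|}(s)))$ and apply H\"older:
\begin{equation*}
\E\big[|\zeta D_0|^\beta\bb{1}_{\mathcal{A}\cap\mathcal{B}}\big]\leq \E\big[|\zeta D_0\widetilde N|^2\bb{1}_{\mathcal{A}\cap\mathcal{B}}\big]^{\beta/2}\cdot \E\big[|\widetilde N|^{-2\beta/(2-\beta)}\bb{1}_{\mathcal{A}\cap\mathcal{B}}\big]^{(2-\beta)/2}.
\end{equation*}
The exponent $(\beta/2-1)$ is chosen so that $D_0\cdot\widetilde N$ is, via another application of \eqref{eqn: truncation}, approximately the Dirichlet polynomial with coefficients $\mathfrak{F}_{-1,\beta/2-1,\theta}(n)\mathfrak{g}(n)$ appearing in Lemma \ref{le:2nd}; hence the first H\"older factor is $\ll(\log T)^{\beta^2(1+\theta)/4+\e}$. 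For the second factor, the identity $-2\beta(\beta/2-1)/(2-\beta)=\beta$ gives $|\widetilde N|^{-2\beta/(2-\beta)}=|e^{(\beta/2)(\mathcal{P}_{1-\e}-\mathcal{P}_{|\theta|})}|^2$, whose expectation, by \eqref{eqn: truncation} and Lemma \ref{le:1nd} (applied with coefficients $\mathfrak{F}_{0,\beta/2,\theta}$), is $\ll(\log T)^{\beta^2(1+\theta)/4}$. Raising the first factor to $\beta/2$ and the second to $(2-\beta)/2$ yields exponents $\beta^3(1+\theta)/8+\beta\e/2$ and $\beta^2(1+\theta)(2-\beta)/8$, which sum to $\beta^2(1+\theta)/4+\beta\e/2\leq \beta^2(1+\theta)/4+\e$, as required.

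The main obstacle is the careful propagation of approximation errors through the product $D_0\widetilde N$: both $D_0$ and the Dirichlet-polynomial approximation of $\widetilde N$ carry $\Omega(n)\leq\nu$ truncations whose convolution does not land exactly in the family to which Lemma \ref{le:2nd} applies. The indicators $\bb{1}_{\mathcal{A}\cap\mathcal{B}}$ are essential for controlling these truncation errors, and extracting the correct leading-order contribution while discarding the $\Omega(n)$ tail is the most delicate technical point of the argument.
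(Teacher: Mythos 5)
Your argument is correct and essentially identical to the paper's: the paper uses the pointwise Young inequality with the same conjugate exponents $2/\beta$ and $2/(2-\beta)$ in place of your H\"older step, splits over the same events $\mathcal{A}(T)\cap\{|\mathcal{P}_{1-\e}(\tfrac12+\ii\tau)|\le 100\log\log T\}$ and its complement, and invokes Lemmas \ref{le:2nd}, \ref{le:1nd} and \ref{le:3rd} for exactly the three expectations your decomposition produces. The ``convolution of truncations'' difficulty you flag at the end is sidestepped in the paper by keeping $e^{-\mathcal{P}_{|\theta|}}$ as an exponential through the Young/H\"older step and applying \eqref{eqn: truncation} only once, to the combined exponential $e^{-(2-\beta)\Re\mathcal{P}_{1-\e}-\beta\Re\mathcal{P}_{|\theta|}}$, which lands directly on the single truncated Dirichlet polynomial with coefficients $\mathfrak{F}_{-1,\beta/2-1,\theta}(n)\mathfrak{g}(n)$ appearing in Lemma \ref{le:2nd}.
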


    \begin{proof}
    	Let $0 < \beta < 2$. By Young's inequality with $p = 2 / \beta$ and $q = 2 / (2 - \beta)$,
    	\begin{equation}\label{eq:pr:momenttwi.beginning}
            \begin{aligned}
                \big|\zeta(\tfrac 12 + \ii \tau)\big|^{\beta}
                &\leq \tfrac{1}{p} \cdot |\zeta(\tfrac 12 + \ii \tau)|^2 \cdot e^{- \frac{2}{q} \Re \mathcal{P}_{1-\e}(\tfrac{1}{2} + \ii \tau)} + \tfrac{1}{q} \cdot e^{\frac{2}{p} \Re \mathcal{P}_{1-\e}(\tfrac{1}{2} + \ii \tau)}
        		\\[1mm]
                &= \tfrac{\beta}{2} \cdot |\zeta(\tfrac 12 + \ii \tau)|^2 \cdot e^{- (2 - \beta) \Re \mathcal{P}_{1-\e}(\tfrac{1}{2} + \ii \tau)} + \tfrac{2 - \beta}{2} \cdot e^{\beta \, \Re \mathcal{P}_{1-\e}(\tfrac{1}{2} + \ii \tau)}.
            \end{aligned}
    	\end{equation}
    	Note that \eqref{eq:pr:momenttwi.beginning} holds trivially for $\beta = 2$.
    	Hence, for $0 < \beta \leq 2$,
    	\begin{equation}\label{eq:pr:momenttwi.next}
            \begin{aligned}
    		\big|(\zeta \cdot e^{-\mathcal{P}_{|\theta|}})(\tfrac 12+ \ii \tau)\big|^{\beta}
            &\leq \tfrac \beta2 \, |\zeta(\tfrac 12 + \ii \tau)|^2 \cdot e^{- (2 - \beta) \Re \mathcal{P}_{1-\e}(\tfrac{1}{2} + \ii \tau) - \beta \, \Re \mathcal{P}_{|\theta|}(\tfrac{1}{2} + \ii \tau)} \\
            &\quad+ \tfrac{2 - \beta}{2} \, e^{\beta \, \Re \mathcal{P}_{1-\e}(\tfrac{1}{2} + \ii \tau) - \beta \, \Re \mathcal{P}_{|\theta|}(\tfrac{1}{2} + \ii \tau)}.
    	   \end{aligned}
        \end{equation}
    	
        On the event $\mathcal{A}(T) \cap \{|\mathcal{P}_{1-\e}(\tfrac{1}{2} + \ii \tau)| \leq 5 \log\log T\}$, we get, by the truncation \eqref{eqn: truncation} with $\nu = 100 \lfloor \log\log T \rfloor$ and the identity $|z + w|^2 \leq 2 (|z|^2 + |w|^2)$, that
    	\begin{equation}\label{eq:pr:momenttwi.eq.1}
            \begin{aligned}
                &e^{- (2 - \beta) \Re \mathcal{P}_{1-\e}(\tfrac{1}{2} + \ii \tau) - \beta \, \Re \mathcal{P}_{|\theta|}(\tfrac{1}{2} + \ii \tau)} \\[-2mm]
                &\quad\ll \bigg| \sum_{\substack{\Omega(n) \leq 100 \lfloor \log\log T \rfloor \\ p | n \implies \log p \leq \log^{1 - \e} T}} \frac{\mathfrak{F}_{-1, \beta / 2 - 1, \theta}(n)\mathfrak{g}(n)}{n^{1/2 + \ii \tau}} \bigg|^2 + (\log T)^{-200},
            \end{aligned}
    	\end{equation}
        where $\mathfrak{F}_{\alpha,\beta,\theta}(n)$ is the completely multiplicative function defined in \eqref{eq:multiplicative.function}.
    	Likewise, on the  same event, we have
    	\begin{equation}\label{eq:pr:momenttwi.eq.2}
            \begin{aligned}
                &e^{\beta \, \Re \mathcal{P}_{1-\e}(\tfrac{1}{2} + \ii \tau) - \beta \, \Re \mathcal{P}_{|\theta|}(\tfrac{1}{2} + \ii \tau)} \\[-2mm]
                &\quad\ll \bigg| \sum_{\substack{\Omega(n) \leq 100 \lfloor \log\log T \rfloor \\ p | n \implies \log p \leq \log^{1 - \e} T}} \frac{\mathfrak{F}_{0, \beta / 2, \theta}(n)\mathfrak{g}(n)}{n^{1/2 + \ii \tau}} \bigg|^2 + (\log T)^{-200}.
            \end{aligned}
    	\end{equation}
    	Finally, on the event $\mathcal{A}(T) \cap \{|\mathcal{P}_{1-\e}(\tfrac{1}{2} + \ii \tau)| > 5 \log\log T\}$, we get, for any $\ell \geq 1$,
    	\begin{equation}\label{eq:pr:momenttwi.eq.3}
            \big|(\zeta \cdot e^{-\mathcal{P}_{|\theta|}})(\tfrac 12+ \ii \tau)\big|^{\beta} \leq (\log T)^4 \cdot (1 + |\zeta(\tfrac 12 + \ii \tau)|^2) \cdot \Big | \frac{\mathcal{P}_{1-\e}(\tfrac{1}{2} + \ii \tau)}{5 \log\log T} \Big |^{2\ell},
    	\end{equation}
        since for $\beta \leq 2$, $|\zeta|^\beta$ is bounded by $(1+|\zeta|^2)$ and $|e^{- \mathcal P_{|\theta|}}|^{\beta}$ is bounded by $(\log T)^4$ on $\mathcal{A}(T)$.
        We choose $\ell = 2 \lfloor \log\log T \rfloor$. Now, take the expectation with $\tau$ restricted to $\mathcal{A}(T)$ in \eqref{eq:pr:momenttwi.next}, then split the terms on the right-hand side over the associated events in \eqref{eq:pr:momenttwi.eq.1}, \eqref{eq:pr:momenttwi.eq.2} and \eqref{eq:pr:momenttwi.eq.3}. We use Lemmas~\ref{le:1nd},~\ref{le:2nd} and~\ref{le:3rd} to bound the expectations.
    \end{proof}

    \subsection{Discretization}

    The analysis of the maximum of zeta on an interval can often be restricted to $h$'s on a grid with mesh of order $(\log T)^{-1}$.
    This can be proved for the maximum using the functional equation for zeta, see for example Lemma 2.2 in \cite{FGH07}.
    We will need a more elaborate variant for general Dirichlet polynomials.

	\begin{proposition}\label{prop:union}
		\hspace{-3mm}Let $\theta > - 1$, $\beta \geq 1$ and $\e > 0$ be given.
        Let $D(s) = \sum_{n\leq T^{1+\e}} a(n)n^{-s}$ be a Dirichlet polynomial of length $T^{1 + \e}$ where $\sup_{n\leq T^{1+\e}} |a(n)| \leq B$ for some $B > 0$ possibly depending on $\e$ and $T$.
		Then, for all $A > 10 (1 + \e) \beta$, $T \leq t \leq 2T$, and $\sigma\geq 1/2$,
		\begin{equation}\label{eq:le:union}
            \begin{aligned}
                &\sup_{|h| \leq \log^\theta T} |D(\sigma + \ii t + \ii h)|^{\beta} \\[-2mm]
                &\quad\qquad\ll_A \sum_{|k| \leq 2 \log^{1+\theta} T} \big | D \big ( \sigma + \ii t + \tfrac{2\pi \ii k}{(1 + 2\e) \log T} \big ) \big |^{\beta} \\[-1.5mm]
                &\quad\qquad\quad+ \sum_{2\log^{1+\theta} T < |k| \leq T} \big | D \big ( \sigma + \ii t + \tfrac{2\pi \ii k }{(1 + 2\e) \log T} \big ) \big |^{\beta} \cdot \frac{1}{1 + |k|^{A}} + B^{\beta} \, T^{-A/2}.
            \end{aligned}
		\end{equation}
	\end{proposition}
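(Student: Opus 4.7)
The plan is to view $f(h) \leqdef D(\sigma + \ii t + \ii h) = \sum_{n\leq T^{1+\e}} a_n n^{-\sigma-\ii t}\, e^{-\ii h\log n}$ as a band-limited function of $h$, whose Fourier spectrum is contained in $[-\Omega, \Omega]$ with $\Omega \leqdef (1+\e)\log T$, and to apply an oversampled Shannon-type interpolation formula at spacing $\Delta \leqdef \tfrac{2\pi}{(2+3\e)\log T}$. Since $\pi/\Delta = (1+3\e/2)\log T > \Omega$, we are strictly below the Nyquist spacing, which leaves room to use a \emph{smooth} reconstructing kernel rather than the $\operatorname{sinc}$ kernel.

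Concretely, I would fix a Schwartz function $K : \R \to \R$ whose Fourier transform is a smooth bump equal to $1$ on $[-\Omega, \Omega]$ and supported in $[-\pi/\Delta, \pi/\Delta]$, obtained by rescaling a $T$-independent bump $\Psi$ so that $K(u) = \tfrac{1}{2\Delta}\check\Psi(\pi u/\Delta)$. This gives
\begin{equation*}
    \Delta\,|K(u)| \ll_N \bigl(1 + |u|/\Delta\bigr)^{-N} \quad \text{for every } N \geq 1,
\end{equation*}
and, by Poisson summation applied to a single exponential $e^{-\ii h \log n}$ (whose frequency lies in $[-\Omega,\Omega]$) and then to the finite linear combination $f$, the exact interpolation identity
\begin{equation*}
    f(h) = \Delta \sum_{k \in \mathbb{Z}} f(k\Delta)\, K(h - k\Delta).
\end{equation*}
In particular, $\sum_k \Delta|K(h-k\Delta)|$ is bounded by a constant uniformly in $h$, as a Riemann-type sum of the integrable Schwartz function $|K|$.

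For $\beta \geq 1$, Jensen's inequality for the probability measure with weights proportional to $\Delta|K(h-k\Delta)|$ yields
\begin{equation*}
    |f(h)|^{\beta} \leq \Bigl(\textstyle\sum_{j} \Delta|K(h-j\Delta)|\Bigr)^{\beta-1}\sum_{k} |f(k\Delta)|^{\beta}\,\Delta|K(h-k\Delta)| \ll_\beta \sum_k |f(k\Delta)|^\beta\, \Delta|K(h-k\Delta)|.
\end{equation*}
It then remains to control the weights $\Delta|K(h-k\Delta)|$ uniformly over $|h|\leq \log^\theta T$. For $|k|\leq 2\log^{1+\theta}T$ the trivial bound $\Delta|K(u)| \ll 1$ suffices, producing the first sum in \eqref{eq:le:union}. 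For $|k| > 2\log^{1+\theta}T$ one has $|h|/\Delta \leq \tfrac{2+3\e}{2\pi}\log^{1+\theta}T \leq |k|/2$, hence $|h-k\Delta|/\Delta \geq |k|/2$, and the Schwartz decay yields $\Delta|K(h-k\Delta)| \ll_A |k|^{-A}$, producing the decay factor in the second sum.

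The main technical obstacle is mild: one must verify the interpolation identity for an $f$ that is only bounded rather than in $L^1$ or $L^2$. This is handled term-by-term in $n$ via the classical sampling theorem for a pure exponential with frequency in the pass-band $[-\Omega,\Omega]$, and extended to $f$ by linearity since the Dirichlet polynomial is a finite sum; one must also track the rescaling of $K$ so that all implicit constants depend only on $\beta$, $\e$, $\theta$ and $A$, and not on $T$, $t$, $\sigma$, or the coefficients $a_n$.
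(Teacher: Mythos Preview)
Your proposal is correct and follows essentially the same approach as the paper: both view $D(\sigma+\ii t+\ii h)$ as a band-limited function of $h$, establish an exact sampling identity at spacing $\tfrac{2\pi}{(2+3\e)\log T}$ with a smooth kernel whose Fourier transform equals $1$ on the spectrum of $D$, then apply H\"older/Jensen for $\beta\geq 1$ and use the rapid decay of the kernel to split the sum. The only cosmetic difference is that the paper derives the interpolation identity via Poisson summation followed by a Paley--Wiener contour-shift argument to kill the nonzero modes, whereas you verify it term-by-term on each exponential $e^{-\ii h\log n}$; these are equivalent routes to the same formula.
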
	
	
	\begin{proof}
        Let $V$ be a smooth function with $V(x) = 1$ for $x\in [-(1+\e),0]$ and compactly supported in $[-(1+2\e), \e ]$. We show
		\begin{equation}\label{eq:discr}
            D(\sigma + \ii t + \ii h) = \frac{1}{1 + 2\e} \sum_{k \in \mathbb{Z}} D \big ( \sigma + \ii t + \tfrac{2\pi \ii k}{(1 + 2\e) \log T} \big ) \widehat{V} \big ( \tfrac{k}{1 + 2\e} - \tfrac{h \log T}{2\pi} \big ).
    		\end{equation}
    	By taking the complex norm and applying H\"older's inequality with $\beta \geq 1$, this yields
    	\begin{equation}
            \begin{aligned}
                |D(\sigma + \ii t + \ii h)|
                &\leq \Big(\frac{1}{1 + 2\e} \sum_{k \in \mathbb{Z}} \big | D \big ( \sigma + \ii t + \tfrac{2\pi \ii k}{(1 + 2\e) \log T} \big ) \big |^{\beta} \cdot \big | \widehat{V} \big ( \tfrac{k}{1 + 2\e} - \tfrac{h \log T}{2\pi} \big )\big | \Big)^{1/\beta} \\
                &\quad\times \Big( \frac{1}{1 + 2\e} \sum_{k \in \mathbb{Z}} \big | \widehat{V} \big( \tfrac{k}{1 + 2\e} - \tfrac{h \log T}{2\pi} \big) \big | \Big)^{1 - 1/\beta}.
            \end{aligned}
    	\end{equation}
        This proves \eqref{eq:le:union} after taking the supremum over $h$, using the rapid decay of $\widehat{V}$, and noticing that $\sup_{n\leq T^{1+\e}} |a(n)| \leq B$ and our assumption on $A$ imply that
        \begin{equation}
            \begin{aligned}
                &\sum_{|k| > T} \big | D \big ( \sigma + \ii t + \tfrac{2\pi \ii k }{(1 + 2\e) \log T} \big ) \big |^{\beta} \cdot \frac{1}{1 + |k|^{A}} \\[-2mm]
                &\qquad\ll B^{\beta} \, T^{(1 + \e) \beta / 2} \sum_{|k| > T} \frac{1}{1 + |k|^{A}} \ll B^{\beta} \, T^{-A/2}.
            \end{aligned}
        \end{equation}

        Since $D(s)$ is of the form $\sum_{n\leq T^{1+\e}}a(n)n^{-s}$, it suffices by linearity to establish \eqref{eq:discr} for a single $n\leq T^{1+\e}$, i.e.,
        \begin{equation}\label{eqn: discretize to prove}
            n^{-\ii h} = \frac{1}{1 + 2\e} \sum_{k \in \mathbb{Z}} n^{ - \tfrac{2\pi \ii k}{(1 + 2\e) \log T}} \cdot \widehat{V} \big ( \tfrac{k}{1 + 2\e} - \tfrac{h \log T}{2\pi} \big ), \quad 1\leq n\leq T^{1+\e}.
        \end{equation}
        Using the Poisson summation formula, the right-hand side can be rewritten as
		\begin{equation}
    		\begin{aligned}
                & \sum_{\ell \in \mathbb{Z}} \int_{\mathbb{R}} \frac{1}{1+2\e} \exp\Big(2\pi \ii y(-\ell-\tfrac{1}{1+2\e}\tfrac{\log n}{\log T})\Big)\cdot \widehat{V} \big ( \tfrac{y}{1 + 2\e} - \tfrac{h \log T}{2\pi} )\rd y\\
                &=  n^{-\ii h} \sum_{\ell \in \mathbb{Z}} e^{-\ii \ell (1+2\e)h\log T }\int_{\mathbb{R}}  \exp\Big(2\pi \ii u(-\ell (1+2\e)-\tfrac{\log n}{\log T})\Big)\cdot \widehat{V} \big ( u)\rd u\\
                &=  n^{-\ii h} \sum_{\ell \in \mathbb{Z}} e^{-\ii \ell (1+2\e)h\log T }\cdot V(-\ell (1+2\e)-\tfrac{\log n}{\log T}),\\
            \end{aligned}
		\end{equation}
        where we made the change of variable $y=(1+2\e)(u + \tfrac{h \log T}{2\pi})$.
        The term $\ell = 0$ is equal to $n^{-\ii h}$ since $V(-\tfrac{\log n}{\log T}) = 1$ for $1 \leq n\leq T^{1+\e}$ by the choice of $V$. The other terms ($\ell\neq 0$) are all equal to $0$ since $-\ell (1+2\e)-\tfrac{\log n}{\log T}$ falls outside the support of $V$ for $1\leq n\leq T^{1+\e}$. This proves \eqref{eqn: discretize to prove} and the proposition.
    \end{proof}

    Proposition~\ref{prop:union} implies five important corollaries to tackle the maximum of $\zeta$ and of Dirichlet polynomials.
	We first observe that the discretization applies to $\zeta$ in Corollary~\ref{cor:discrzeta}. This is a consequence of the following approximation.

    \begin{lemma}[Approximation of $\zeta$]\label{zetapprox}
        Let $\e > 0$ and $\sigma \geq 1/2$ be given, and let $k > \max\{5, 10 / \e\}$ be an integer.
        Then, as $T\to \infty$ and for $t\asymp T$, we have
        \begin{equation} \label{eq:zetapprox}
            \zeta(\sigma + \ii t) =
            \begin{cases}
                \sum_{n \leq T^{1 + \e}} n^{-\sigma - \ii t} w_k \big ( \log \frac{e^k n}{ T^{1 + \e}} \big ) + \OO_k(T^{-k \e / 2}), &\text{if } 1/2 \leq \sigma \leq 2, \\
                \sum_{n \leq T}  n^{-\sigma - \ii t} + \OO(T^{-1}), &\text{if } \sigma > 2,
            \end{cases}
        \end{equation}
        where the smoothing $w_{k}$ is defined by setting
        \begin{equation}
            w_k(x) \leqdef
            \begin{cases}
                1, &\text{if } x < 0, \\
                (-1)^k \sum_{\ell = 0}^k \binom{k}{\ell} \frac{(-1)^{\ell}}{k!} ( \ell - x )_+^k, &\text{if } 0 \leq x < k, \\
                0, &\text{if } x \geq k,
            \end{cases}
        \end{equation}
        where $(y)_+ \leqdef \max\{y,0\}$.
        Examples of graphs for $w_k(x)$ are provided in Figure~\ref{fig:example}.
    \end{lemma}

    \begin{figure}%
        \centering
        \subfloat[][\centering Graph of $w_3(x)$]{\includegraphics[width=5.9cm]{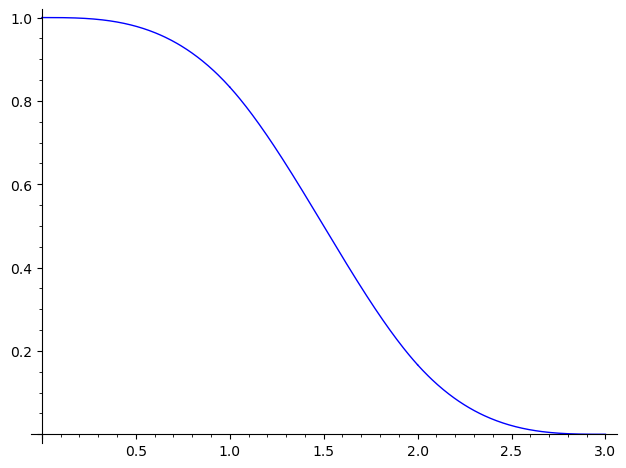}}%
        \qquad
        \subfloat[][\centering Graph of $w_6(x)$]{\includegraphics[width=5.9cm]{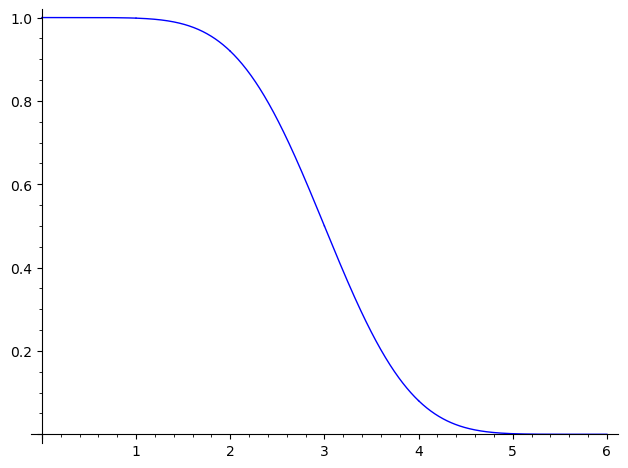}}%
        \caption{Examples of graphs for $w_k(x)$.}%
        \label{fig:example}%
    \end{figure}

    \begin{proof}
        The case $\sigma > 2$ is a trivial consequence of the fact that $|\sum_{n > T}  n^{-\sigma - \ii t}| \leq |\sum_{n > T}  n^{-2}| \leq T^{-1}$.
        Therefore, assume $1/2 \leq \sigma \leq 2$.
        We claim that,
        \vspace{-1mm}
        \begin{equation}
            w_{k}(x) = \frac{1}{2\pi \ii} \int_{2 - \ii \infty}^{2 + \ii \infty} e^{-x z} \Big ( \frac{e^z - 1}{z} \Big )^{k} \frac{\rd z}{z}.
        \end{equation}
        First it is easy to check that this formula holds for $x < 0 $ and $x > k$: if $x < 0$ then we shift the contour towards $\Re z = - \infty$ and collect a single pole with residue $1$ at $z = 0$, while if $x > k$ then we shift the contour towards $\Re z = \infty$ and we see that the integral is zero. In the remaining intermediate range $0 \leq x < k$ we expand
        \begin{equation}
            \Big ( \frac{e^{z} - 1}{z} \Big )^{k} = \frac{(-1)^k}{z^k} \sum_{\ell = 0}^{k} \binom{k}{\ell} (-1)^{\ell} e^{\ell z}
        \end{equation}
        and we use the fact that,
        \begin{equation}
            \frac{1}{2\pi \ii} \int_{2 - \ii \infty}^{2 + \ii \infty} e^{-x z} e^{\ell z} \frac{\rd z}{z^{k + 1}} = \frac{1}{k!}
            \begin{cases}
                (\ell - x)^k, & \text{ if } \ell - x > 0, \\
                0, & \text{ if } \ell - x \leq 0.
            \end{cases}
        \end{equation}

        Therefore,
        \begin{equation}
            \sum_{n \geq 1} \frac{1}{n^{\sigma + \ii t}} w_k \Big ( \log \frac{n}{T^{1 + \e}} \Big ) = \frac{1}{2\pi \ii} \int_{2 - \ii \infty}^{2 + \ii \infty} \zeta(\sigma + \ii t + z) T^{(1 + \e)z} \Big ( \frac{e^z - 1}{z} \Big )^{k} \frac{\rd z}{z}.
        \end{equation}
        We now shift the contour to the line $\Re z = - (k - 2)$. We collect a pole at $z = 0$ with residue $\zeta(\sigma + \ii t)$. On the line $\Re z = - (k - 2)$, we bound the integral using the estimate $|\zeta(r + \ii t)| \ll (1 + |t|)^{1/2 - r}$, which is valid for any fixed $r < - \tfrac{1}{100}$ and all $t \in \mathbb{R}$.%
        \footnote{This estimate follows from applying the functional equation for $\zeta(r + \ii t)$, bounding the ratio of Gamma factors using Stirling's formula and bounding $\zeta(1 - r - \ii t)$ trivially by $O(1)$.}
        Specifically, the contribution of the line $\Re z = - (k - 2)$ is bounded by
        \begin{equation}
            T^{- (1 + \e)(k - 2)} \int_{\mathbb{R}} (T^{k - 2} + |u|^{k - 2}) \frac{2^k \rd u}{| 2 - k + \ii u|^{k + 1}} \ll_{k} T^{-\e k / 2}.
        \end{equation}
        This proves that
        \begin{equation}
            \zeta(\sigma + \ii t) = \sum_{n\geq 1} \frac{1}{n^{\sigma + \ii t}} w_k \Big ( \log \frac{n}{ T^{1 + \e}} \Big ) + \OO_k(T^{-k \e / 2}), \quad \text{for } t\asymp T.
        \end{equation}
        The conclusion follows by a simple rescaling.
    \end{proof}

    From Lemma~\ref{zetapprox}, we derive the following discretization result.

    \begin{corollary}\label{cor:discrzeta}
        Let $\theta > - 1$, $\beta \geq 1$ and $\e > 0$ be given.
        For any $A > 10 (\e^{-1} + 1) \beta$ and all $T \leq t \leq 2T$,
        \begin{equation}
            \begin{aligned}
                &\max_{|h| \leq \log^{\theta}T} |\zeta(\tfrac 12 + \ii t + \ii h)|^{\beta} \\[-0.5mm]
                &\qquad\ll_A \sum_{|\ell| \leq 2\log^{1 + \theta} T} \big | \zeta \big (\tfrac{1}{2} + \ii t + \tfrac{2\pi \ii \ell}{(1 + 2\e) \log T} \big ) \big |^{\beta} \\[-1.5mm]
                &\qquad\quad+ \sum_{2 \log^{1 + \theta}T < |\ell| \leq T} \big | \zeta \big ( \tfrac{1}{2} + \ii t + \tfrac{2\pi \ii \ell}{(1 + 2\e) \log T} \big ) \big |^{\beta} \cdot \frac{1}{1 + |\ell|^A} + T^{-A/2}.
            \end{aligned}
        \end{equation}
	\end{corollary}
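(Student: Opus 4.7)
The idea is to approximate $\zeta(s)$ by a Dirichlet polynomial of length $T^{1+\e}$ up to an error smaller than any fixed negative power of $T$, and then to apply Proposition~\ref{prop:union} directly to that Dirichlet polynomial. Once this is done, converting back from the Dirichlet polynomial to $\zeta$ at each of the discrete points $s_k = \tfrac{1}{2} + \ii t + \tfrac{2\pi \ii k}{(2+3\e)\log T}$ introduces only a power-saving error that can be absorbed into the $T^{-B}$ term.

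For the approximation, I would take a fixed smooth cutoff $W$ on $[0,\infty)$ with $W \equiv 1$ on $[0,1/2]$ and $W \equiv 0$ on $[1,\infty)$, set $N = T^{1+\e}$, and define the Dirichlet polynomial
\[
Z(s) \leqdef \sum_{n \leq N} \frac{W(n/N)}{n^s}.
\]
Mellin inversion gives $Z(s) = \frac{1}{2\pi \ii}\int_{(2)} \widetilde{W}(w) N^w \zeta(s+w) \rd w$, where $\widetilde{W}(w) = \int_0^\infty W(x) x^{w-1}\rd x$. Since $W(0)=1$, $\widetilde{W}$ has a simple pole at $w=0$ with residue $1$, is otherwise entire, and decays faster than any polynomial on vertical lines. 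Shifting the contour from $\Re w = 2$ to $\Re w = -C$ picks up the residue $\zeta(s)$ at $w=0$ and the residue $N^{1-s}\widetilde{W}(1-s)$ at $w=1-s$. Combined with the convexity bound for $\zeta$ on the shifted line, this yields
\[
\zeta(s) = Z(s) + \OO(T^{-C_0}), \qquad s = \tfrac{1}{2} + \ii t + \ii h, \quad T \leq t \leq 2T,
\]
for arbitrarily large $C_0 > 0$, with the analogous bound holding at the shifted points $s_k$ at the cost of at most a polynomial factor in $|k|$.

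Next I apply Proposition~\ref{prop:union} to $Z$, which has length at most $N = T^{1+\e}$, to obtain
\[
\sup_{|h| \leq \log^\theta T} |Z(\tfrac{1}{2} + \ii t + \ii h)|^\beta \ll_A \sum_{|k| \leq 2\log^{1+\theta}T} |Z(s_k)|^\beta + \sum_{|k| > 2\log^{1+\theta}T} \frac{|Z(s_k)|^\beta}{1+|k|^A}.
\]
For $\beta \geq 1$, the elementary bound $(a+b)^\beta \ll a^\beta + b^\beta$ combined with the approximation above gives $\sup_h |\zeta|^\beta \ll \sup_h |Z|^\beta + T^{-C_0\beta}$ and termwise $|Z(s_k)|^\beta \ll |\zeta(s_k)|^\beta + T^{-C_0\beta}$. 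The substitution error aggregates to at most $\log^{1+\theta}T \cdot T^{-C_0\beta}$ from the main range plus $T^{-C_0\beta}\sum_k(1+|k|^A)^{-1}$ from the tail, and both are $\OO(T^{-B})$ for $C_0$ chosen sufficiently large in terms of $\beta$ and $B$.

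The main technical point is to keep the approximation $\zeta(s_k) = Z(s_k) + \OO(T^{-C_0})$ uniform over the whole tail range $|k| > 2\log^{1+\theta}T$, where $|\Im s_k|$ may grow polynomially in $T$. This is handled by the super-polynomial decay of $\widetilde{W}$ on vertical lines, which gives $|\widetilde{W}(1-s_k)| \ll (1+|k|)^{-M}$ for any $M$, together with the convexity bound for $\zeta$ on the shifted contour; the weight $(1+|k|^A)^{-1}$ coming from Proposition~\ref{prop:union} then more than absorbs the resulting polynomial losses in $|k|$.
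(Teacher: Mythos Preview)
Your proposal is correct and follows essentially the same route as the paper: approximate $\zeta$ on the critical line by a Dirichlet polynomial of length $\leq T^{1+\e}$ with a power-saving error, apply Proposition~\ref{prop:union} to that polynomial, and absorb the approximation errors into $T^{-B}$. The only cosmetic difference is that the paper invokes the ready-made approximation $\sum_{n\leq T} n^{-s}(1-n/T)^{A}=\zeta(s)+\OO_A(T^{-A/2})$ from the literature in place of your Mellin-inversion construction with a smooth cutoff, and leaves the back-substitution and tail handling implicit; your more detailed treatment of the tail (trivial bound on $|Z(s_k)|$ for very large $|k|$, approximation for moderate $|k|$) is a perfectly good way to fill that in.
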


	\begin{proof}
        This is a consequence of the $\zeta$ approximation in Lemma~\ref{zetapprox} with $\sigma = 1/2$ and $k = \lfloor (A + 1)/ (\beta \e) \rfloor $, and the discretization in Proposition~\ref{prop:union} with $B = 1$.
	\end{proof}

    As a consequence, we get a suboptimal upper bound for $\theta > -1$ using the second moment. Note that this bound also works for $\theta$ dependent on $T$.

    \begin{corollary}\label{cor:maxbound}
        Let $0 < \e \leq 1$ be given and let $k > 10/\e$ be an integer.
        Then, for any $\theta > -1$, possibly dependent on $T$, we have
		\begin{equation}\label{eq:cor:maxbound.DP}
            \P\bigg(\max_{|h| \leq \log^\theta T} \bigg|\sum_{n \leq T^{1 + \e}} \frac{1}{n^{\frac{1}{2} + \ii \tau + \ii h}} w_k \Big ( \log \frac{e^k n}{ T^{1 + \e}} \Big )\bigg| > 2^{\theta} (\log T)^{2 + \theta}\bigg) \ll \frac{2^{-\theta}}{\log T}.
		\end{equation}
	\end{corollary}

	\begin{proof}
        The Dirichlet polynomial in \eqref{eq:cor:maxbound.DP} is $\ll \sum_{n \leq T^{1+\e}} n^{-1/2} \ll T^{(1+\e)/2} \ll T$, so the probability is just zero when $\theta > \log T / \log\log T$.
        Therefore, we assume that $\theta \leq \log T / \log\log T$. By the $\zeta$ approximation in Lemma~\ref{zetapprox}, it suffices to prove
        \begin{equation}\label{eq:cor:maxbound}
            \P\Big(\max_{|h| \leq \log^\theta T} |\zeta(\tfrac 12 + \ii \tau + \ii h)| > 2^{\theta/2} (\log T)^{2 + \theta}\Big) \ll \frac{2^{-\theta}}{\log T}.
        \end{equation}
        By applying Markov's inequality and Corollary~\ref{cor:discrzeta} with $A = 100 (\e^{-1} + 1)$, the probability in \eqref{eq:cor:maxbound} is
		\begin{equation}\label{eq:cor:maxbound.beginning}
            \begin{aligned}
        		&\leq 2^{-\theta} (\log T)^{-4 - 2\theta} \, \E\Big[\max_{|h| \leq \log^{\theta}T} |\zeta(\tfrac 12 + \ii \tau + \ii h)|^2\Big] \\[2mm]
                &\ll 2^{-\theta} (\log T)^{-4 -2\theta} \sum_{|\ell| \leq 2 \log^{1 + \theta}T} \E\Big[\big | \zeta \big (\tfrac 12 + \ii \tau + \tfrac{2\pi \ii \ell}{(1 + 2\e) \log T} \big ) \big |^{2}\Big] \\
                &\quad+ 2^{-\theta} (\log T)^{-4 - 2\theta} \sum_{2 \log^{1 + \theta}T < |\ell| \leq T} \E\Big[\big | \zeta \big ( \tfrac 12 + \ii \tau + \tfrac{2\pi \ii \ell}{(1 + 2\e) \log T} \big ) \big |^{2}\Big] \cdot \frac{1}{1 + |\ell|^{100}} + T^{-50}.
            \end{aligned}
		\end{equation}
        Using a standard second moment bound, see e.g.\ \cite[p.141]{MR882550}, the last two expectations are $\ll \log T$.
        We conclude that the right-hand side of \eqref{eq:cor:maxbound.beginning} is
		\begin{equation}
            \ll 2^{-\theta} (\log T)^{-2 - \theta} \ll \frac{2^{-\theta}}{\log T},
		\end{equation}
		since $\theta > -1$.
	\end{proof}

    A similar reasoning using Markov's inequality can be applied to get an upper bound for the maximum of $\mathcal P_\alpha$, $0<\alpha<1$.
    The bound below is suboptimal for $\theta < 0$ and optimal for $\theta \geq 0$.
    \begin{corollary}
        Let $\theta > -1$, $\e > 0$ and $\sigma \geq 1/2$ be given.
        Then,
        \begin{equation}\label{eqn: max DP}
            \P\Big(\max_{|h| \leq \log^\theta T} |\mathcal{P}_{\alpha}(\sigma + \ii \tau + \ii h)| > (\sqrt{\alpha(1+\theta)}+\e)\log\log T\Big) = \oo(1).
        \end{equation}
    \end{corollary}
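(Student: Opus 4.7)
The plan is to combine the discretization in Proposition \ref{prop:union} with a high-moment estimate for Dirichlet polynomials, in the same spirit as Corollary \ref{cor:maxbound} but using the sharper $2\ell$-th moment bound from Lemma \ref{lem:moment.estimates.Lemma.3.Sound.2009.generalisation} (already invoked in the proof of Lemma \ref{le:3rd}). Since $\mathcal{P}_\alpha$ has length $\exp(\log^\alpha T)\leq T^\e$ in the relevant case $\alpha<1$, the discretization applies to it directly.

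First, apply Proposition \ref{prop:union} to $\mathcal{P}_\alpha$ with exponent $2\ell$ for an integer $\ell$ of order $\log\log T$ to be chosen. This reduces bounding $\max_{|h|\leq\log^\theta T}|\mathcal{P}_\alpha(\sigma+\ii\tau+\ii h)|^{2\ell}$ to a sum of $\OO(\log^{1+\theta}T)$ terms of the form $|\mathcal{P}_\alpha(\sigma+\ii\tau+2\pi\ii k/((2+3\e)\log T))|^{2\ell}$ plus a tail with rapidly decaying weights. For each such discrete term, the moment estimate yields
\begin{equation*}
\E\big[|\mathcal{P}_\alpha(\sigma+\ii\tau+\ii h_k)|^{2\ell}\big]\leq \ell!\, V^\ell,\qquad V\leqdef\sum_{\log p\leq \log^\alpha T}p^{-2\sigma}\leq \alpha\log\log T+\OO(1),
\end{equation*}
where the last inequality uses Lemma \ref{lem:PNT.estimates} and $\sigma\geq 1/2$, and the bound is valid provided $\ell\ll \log^{1-\alpha}T$, which is easily arranged. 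The tail over $|k|>2\log^{1+\theta}T$ is handled by enlarging the integration exactly as in \eqref{eqn: large ks}; the resulting polynomial-in-$|k|$ growth is absorbed into the factor $1/(1+|k|^A)$ by taking $A$ large.

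Combining Markov's inequality with the union bound over the $\OO(\log^{1+\theta}T)$ discrete shifts, and applying Stirling's formula to $\ell!$, gives
\begin{equation*}
\P\Big(\max_{|h|\leq\log^\theta T}|\mathcal{P}_\alpha(\sigma+\ii\tau+\ii h)|>M\Big)\ll \sqrt{\ell}\,\log^{1+\theta}T\cdot\Big(\frac{\ell V}{eM^2}\Big)^{\!\ell}.
\end{equation*}
Setting $M=(\sqrt{\alpha(1+\theta)}+\e)\log\log T$ and choosing $\ell=\lfloor M^2/V\rfloor$ makes the ratio in parentheses equal to $e^{-1+\oo(1)}$, and a direct computation gives $\ell\geq(1+\theta+\delta)\log\log T$ for some explicit $\delta=\delta(\alpha,\theta,\e)>0$, so that the whole expression is $\OO((\log T)^{-\delta+\oo(1)})=\oo(1)$. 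The only quantitative point requiring care is the balance $M^2\geq(1+\theta)V+(\text{slack})$ at the critical threshold, which matches precisely the REM heuristic for $\log^{1+\theta}T$ independent complex random variables with $\E|\cdot|^2\asymp\alpha\log\log T$, and explains why $\sqrt{\alpha(1+\theta)}$ is the correct leading constant.
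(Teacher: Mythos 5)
Your proof is correct and follows essentially the same route as the paper's: Markov's inequality with a high moment of order $\log\log T$, discretization via Proposition \ref{prop:union}, and the moment bound of Lemma \ref{lem:moment.estimates.Lemma.3.Sound.2009.generalisation} combined with the prime number theorem estimate $\sum_{\log p\leq\log^{\alpha}T}p^{-2\sigma}\leq\alpha\log\log T+\OO(1)$. The only cosmetic difference is the choice of moment order ($\ell=\lfloor M^{2}/V\rfloor$ versus the paper's $k=\lfloor(1+\theta)\log\log T\rfloor$), both of which yield the required $\oo(1)$ bound.
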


    \begin{proof}
        We apply Markov's inequality with exponent $2\ell$, and discretize as in \eqref{eq:cor:maxbound.beginning} using Proposition~\ref{prop:union} with $B  = 1$. We then use moment estimates from Lemma~\ref{lem:moment.estimates.Lemma.3.Sound.2009.generalisation}, with $\ell = \lfloor (1+\theta)\log\log T \rfloor$, to bound the expectations.
    \end{proof}

    When $\theta<0$ and $\alpha>|\theta|$, the bound \eqref{eqn: max DP} (and its analogue for $\zeta$) needs to be refined by discarding the contribution of small primes. The result below directly implies that for $\theta<0$ and $\alpha>|\theta|$, the sharp upper bound for $\Re \mathcal{P}_{\alpha}$ is $\sqrt{(\alpha+\theta)(1+\theta)}\log\log T$ since the effective variance is $\tfrac{(\alpha+\theta)}{2}\log\log T$.

    \begin{corollary}\label{cor:DP.cutting.0.to.theta}
        Let $-1 < \theta < 0$ and $\sigma\geq 1/2$ be given.
        Then, for any $0 < \e < C$ and $V = V(T)$ that satisfies $\e \log \log T \leq V \leq C \log \log T$, we have
        \begin{equation}\label{eq:cor:DP.cutting.0.to.theta}
            \P\Big(\max_{|h| \leq \log^\theta T} \big|\mathcal{P}_{|\theta|}(\sigma + \ii \tau + \ii h)\big| > V\Big) \ll e^{-c V},
        \end{equation}
        for some constant $c = c(\e,C) > 0$.
    \end{corollary}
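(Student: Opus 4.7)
The strategy is to combine a refined version of the discretization in Proposition~\ref{prop:union}, tailored to the short length of $\mathcal{P}_{|\theta|}$, with a high-moment Chebyshev estimate at each discretization point. The crucial observation is that $\mathcal{P}_{|\theta|}$ is supported on primes $p \leq X \leqdef \exp(\log^{|\theta|}T)$, so it varies on the scale $(\log X)^{-1} = \log^{\theta} T$; thus the interval $|h| \leq \log^{\theta} T$ should contain only $\OO(1)$ effectively independent sample points, rather than the $\log^{1+\theta}T$ that the naive $1/\log T$ scale would give.

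First, I would re-run the proof of Proposition~\ref{prop:union} with $\log T$ replaced throughout by $\log X$. Concretely, taking the smoothing $G(x) = V(2\pi x/\log X)$ ensures $G(\log n/(2\pi)) = 1$ for all $n \leq X^{1+\e}$, and the Paley--Wiener bound \eqref{eq:Paley.Wiener} holds with $\log T$ replaced by $\log X$. Poisson summation then yields a discretization at scale $\asymp \log^{\theta} T$, so that there are only $\OO(1)$ sample points $\{h_j\}_{j \in J_0}$ inside $|h| \leq \log^{\theta} T$, and for any $\beta \geq 1$,
\begin{equation*}
\sup_{|h| \leq \log^{\theta} T} \big|\mathcal{P}_{|\theta|}(\sigma + \ii\tau + \ii h)\big|^{\beta} \ll \sum_{j \in J_0} \big|\mathcal{P}_{|\theta|}(\sigma + \ii\tau + \ii h_j)\big|^{\beta} + (\text{rapidly decaying tail}),
\end{equation*}
where the tail term is handled by a crude second-moment estimate as in the proof of Corollary~\ref{cor:maxbound}.

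Next, at each sample point I would apply Markov's inequality at exponent $2k$. By Lemma~\ref{lem:moment.estimates.Lemma.3.Sound.2009.generalisation},
\begin{equation*}
\E\big|\mathcal{P}_{|\theta|}(\sigma + \ii\tau + \ii h_j)\big|^{2k} \ll k!\,\Big(\sum_{p \leq X} p^{-2\sigma}\Big)^{k} \leq k!\,(|\theta|\log\log T + \OO(1))^{k},
\end{equation*}
valid so long as $X^{k} \leq T^{1-\oo(1)}$, i.e.\ $k \ll \log^{1+\theta} T$. Choosing the optimal exponent $k = \lfloor V^{2}/(|\theta|\log\log T)\rfloor$---admissible since $V \leq C\log\log T$ forces $k \leq C^{2}\log\log T / |\theta|$---Stirling's formula produces the per-point Gaussian-type tail
\begin{equation*}
\P\big(|\mathcal{P}_{|\theta|}(\sigma + \ii\tau + \ii h_j)| > V\big) \ll \sqrt{V}\cdot \exp\!\Big(-\frac{V^{2}}{|\theta|\log\log T}\Big).
\end{equation*}
Invoking the lower bound $V \geq \e \log\log T$ to get $V^{2}/(|\theta|\log\log T) \geq \e V/|\theta|$, absorbing the sub-exponential prefactor, and taking a union bound over $J_0$, concludes the argument with, say, $c = \e/(3|\theta|)$.

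The main obstacle is the first step. If one applies Proposition~\ref{prop:union} at its stated scale $(\log T)^{-1}$, the union bound introduces a factor $\log^{1+\theta} T = \exp((1+\theta)\log\log T)$ which is \emph{not} absorbed by the per-point tail $\exp(-V^{2}/(|\theta|\log\log T))$ when $\e^{2} < |\theta|(1+\theta)$. Exploiting the shorter length $X$ of $\mathcal{P}_{|\theta|}$---together with the fact that for $\theta<0$ one has the cancellation $\log^{\theta}T \cdot \log X = \log^{\theta+|\theta|}T = 1$---to reduce the number of effective samples to $\OO(1)$ is precisely what yields the uniform exponential decay $e^{-cV}$ for every $\e \in (0,C)$.
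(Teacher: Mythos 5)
Your argument is correct, but it takes a genuinely different route from the paper's. The paper does not rescale the discretization: it writes $S(h)=\mathcal{P}_{|\theta|}(\sigma+\ii\tau+\ii h)$, splits off $\P(|S(0)|>V/2)$ (handled exactly by your per-point high-moment Chebyshev bound), and then controls the oscillation $\max_{|h|\leq\log^\theta T}|S(h)-S(0)|$ using Proposition~\ref{prop:union} at its original scale $(\log T)^{-1}$, hence with $\log^{1+\theta}T$ sample points. The union-bound factor $\log^{1+\theta}T$ is then beaten not by a Gaussian tail but by the observation that the increment itself is tiny: by Lemma~\ref{lem:moment.estimates.Lemma.3.Sound.2009.generalisation} its $2\ell$-th moment is $\ll \ell!\,\big(\sum_{\log p\leq\log^{|\theta|}T}(2-2\cos(|h|\log p))/p\big)^{\ell}\ll(\ell c)^{\ell}$, since $2-2\cos(|h|\log p)\ll(|h|\log p)^2$ and $|h|\cdot\log^{|\theta|}T\leq 1$ makes the variance $\OO(1)$; with $\ell\asymp\log\log T$ this gives a superexponentially small per-point bound that absorbs the polynomial number of sample points. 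Both proofs thus exploit the same cancellation $|h|\cdot\log X\leq1$, but in dual ways: you use it to reduce the number of effective sample points to $\OO(1)$ (at the cost of re-running the Paley--Wiener/Poisson argument of Proposition~\ref{prop:union} with $\log T$ replaced by $\log X$, plus a separate treatment of the off-interval tail as in Corollary~\ref{cor:maxbound}), whereas the paper uses it to shrink the per-point deviation, which lets it reuse Proposition~\ref{prop:union} verbatim. Your version has the minor advantage of producing the sharper effective variance $|\theta|\log\log T$ and hence an explicit $c\approx\e/|\theta|$; the paper's is slightly more economical in that no new discretization lemma is needed.
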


    \begin{proof}
        For a lighter notation, write $S(h) =\mathcal{P}_{|\theta|}(\sigma + \ii \tau + \ii h)$.
        (We keep the dependence on $\tau$ implicit, consistent with the probabilistic notation for random variables.)
        We have
        \vspace{-2mm}
        \begin{equation}\label{eq:cor:DP.cutting.0.to.theta.eq.first}
            \begin{aligned}
                \P\Big(\max_{|h| \leq \log^\theta T} |S(h)| > V\Big)
                &\leq \P\Big(\max_{|h| \leq \log^\theta T} |S(h) - S(0)| > V/2\Big) \\
                &\quad+ \P\big(|S(0)| > V/2\big).
            \end{aligned}
        \end{equation}
        Let $\ell$ denote a generic natural integer.
        By Markov's inequality, a moment estimate (Lemma~\ref{lem:moment.estimates.Lemma.3.Sound.2009.generalisation}) and a prime number theorem estimate (Lemma~\ref{lem:PNT.estimates}), we have
        \begin{equation}
            \P\big(|S(0)| > V/2\big) \leq \frac{\E\big[|S(0)|^{2\ell}\big]}{(V/2)^{2\ell}} \ll \ell \hspace{0.3mm}! \, \bigg(\frac{\sum_{p\leq T} p^{-2\sigma}}{(V/2)^2}\bigg)^{\ell} \ll \bigg(\frac{4 \ell \log \log T}{\e^2 (\log \log T)^2}\bigg)^{\ell}.
        \end{equation}
        With the choice $\ell = \lfloor \frac{\e^2}{8} \log \log T\rfloor$, this probability is $\ll \exp(-a V)$ for some constant $a = a(\e,C) > 0$.

        It remains to control the first probability on the right-hand side of \eqref{eq:cor:DP.cutting.0.to.theta.eq.first}.
        Let $\ell$ denote another natural integer to be chosen later.
        By applying Proposition~\ref{prop:union}, we get
        \begin{equation}\label{eq:cor:DP.cutting.0.to.theta.eq.Sobolev}
            \begin{aligned}
                \E\Big[\max_{|h| \leq \log^\theta T} |S(h) - S(0)|^{2\ell}\Big]
                &\ll \log^{1+\theta} T \cdot \max_{|h| \leq \log^{\theta} T} \E\big[|S(h) - S(0)|^{2\ell}\big]  \\
            \end{aligned}
        \end{equation}
        A short calculation, using moment estimates (Lemma~\ref{lem:moment.estimates.Lemma.3.Sound.2009.generalisation}) followed by prime number theorem estimates (Lemma~\ref{lem:PNT.estimates}), yields
        \begin{equation}\label{eq:cor:DP.cutting.0.to.theta.eq.Q}
            \max_{|h| \leq \log^{\theta} T} \E\big[|S(h) - S(0)|^{2\ell}\big] \ll \ell \hspace{0.3mm}! \, \bigg(\sum_{\log p\leq \log^{|\theta|}T} \frac{2 - 2\cos(|h| \log p)}{p}\bigg)^{\ell}
            \ll (\ell \, d)^{\ell},
        \end{equation}
        for some constant $d > 0$
        (to obtain the last inequality, note that $|h| \cdot \log^{|\theta|}T \leq 1$).

        Then, by Markov's inequality and the choice $\ell = \lfloor \frac{\e^2}{8d} \log \log T\rfloor$, we deduce
        \begin{equation}
            \P\Big(\max_{|h| \leq \log^\theta T} |S(h) - S(0)| > V/2\Big) \ll \log^{1+\theta} T \cdot \bigg(\frac{4 \, \ell \, d}{V^2}\bigg)^{\ell} \ll e^{-b V},
        \end{equation}
        for some constant $b = b(\e,C) > 0$.
    \end{proof}

    As before, the maximum of $\zeta \cdot e^{-\mathcal{P}_{|\theta|}}$ can be discretized by truncating the exponential.
	\begin{corollary}\label{cor:union}
        Let $- 1 < \theta \leq 0$ and $\e > 0$ be given. Then, there exists a constant $C = C(\theta,\e) > 0$ such that the event
		\begin{equation}
            \begin{aligned}
                &\max_{|h| \leq \log^{\theta}T} |(\zeta \cdot e^{-\mathcal{P}_{|\theta|}})(\tfrac 12 + \ii \tau + \ii h)|^2 \\[-0.5mm]
                &\qquad\leq C \sum_{|\ell| \leq 2\log^{1 + \theta} T} \big | (\zeta \cdot e^{-\mathcal{P}_{|\theta|}}) \big (\tfrac{1}{2} + \ii \tau + \tfrac{2\pi \ii \ell}{(1 + 2\e) \log T} \big ) \big |^2 \\[-1.5mm]
                &\qquad\quad+ C \sum_{2 \log^{1 + \theta}T < |\ell| \leq T} \big | (\zeta \cdot e^{-\mathcal{P}_{|\theta|}}) \big ( \tfrac{1}{2} + \ii \tau + \tfrac{2\pi \ii \ell}{(1 + 2\e) \log T} \big ) \big |^2 \cdot \frac{1}{1 + |\ell|^{100}} + T^{-50}.
            \end{aligned}
	   \end{equation}
        has probability $1 - \oo(1)$.
	\end{corollary}
	
	\begin{proof}
		Define the event
        \begin{equation}\label{eq:A.tilde.original}
            \widetilde{\mathcal{A}}(T) =\Big\{\max_{|h| \leq \log^\theta T} \big|\mathcal{P}_{|\theta|}(\tfrac{1}{2} + \ii \tau + \ii h)\big| \leq 2 \log\log T\Big\}.
        \end{equation}
        By Corollary~\ref{cor:DP.cutting.0.to.theta}, we have $\P(\widetilde{\mathcal{A}}(T)) = 1 - \oo(1)$.
		By \eqref{eqn: truncation}, for all $\tau \in \widetilde{\mathcal{A}}(T)$, we also have
    	\begin{equation}\label{eq:cor:union.eq.2}
            \begin{aligned}
                \Big|\sum_{\substack{\Omega (n) \leq 20 \lfloor \log\log T \rfloor \\ p | n \implies \log p \leq \log^{|\theta|} T}} \frac{(-1)^{\Omega(n)}\mathfrak{g}(n)}{n^{1/2 + \ii \tau + \ii h}}\Big|
                &= \Big|e^{-\mathcal{P}_{|\theta|}(\tfrac{1}{2} + \ii \tau + \ii h)} \Big|  + \OO \big((\log T)^{-20}\big) \\[-5mm]
                &\asymp \Big|e^{-\mathcal{P}_{|\theta|}(\tfrac{1}{2} + \ii \tau + \ii h)} \Big|.
            \end{aligned}
    	\end{equation}
        Combining this with the $\zeta$ approximation in Lemma~\ref{zetapprox} with $\sigma = 1/2$ and $k = 102/\e$, we conclude that, for all $\tau \in \widetilde{\mathcal{A}}(T)$ and uniformly for $y\asymp T$,
		\begin{equation}\label{eq:cor:union.eq.3}
            \big|(\zeta \cdot e^{-\mathcal{P}_{|\theta|}})(\tfrac 12 + \ii \tau + \ii y)\big|  \asymp |D(\tfrac 12 + \ii \tau + \ii y)| + \OO\bigg(\frac{\log^2 T}{T^{51}}\bigg),
		\end{equation}
        where $D$ is a Dirichlet polynomial of length $T^{1 + 2\e}$.
        Proposition~\ref{prop:union} implies
		\begin{equation}
            \begin{aligned}
                &\max_{|h| \leq \log^{\theta}T} |D(\tfrac 12 + \ii \tau + \ii h)|^2 \\[-1mm]
                &\qquad\ll \sum_{|\ell| \leq 2\log^{1 + \theta} T} \big | D \big (\tfrac{1}{2} + \ii \tau + \tfrac{2\pi \ii \ell}{(1 + 2\e) \log T} \big ) \big |^2 \\[-1.5mm]
                &\qquad\quad+ \sum_{2 \log^{1 + \theta}T < |\ell| \leq T} \big | D \big ( \tfrac{1}{2} + \ii \tau + \tfrac{2\pi \ii \ell}{(1 + 2\e) \log T} \big ) \big |^2 \cdot \frac{1}{1 + |\ell|^{100}} + T^{-50}.
            \end{aligned}
	   \end{equation}
		Together with \eqref{eq:cor:union.eq.3}, this concludes the proof.
	\end{proof}

    \subsection{Proofs of the upper bounds}\label{se:upperbounds}

    \subsubsection{The case \texorpdfstring{$\theta \geq 0$}{theta bigger or equal to 0}}

    \begin{proof}[Proof of Theorem~\ref{thm: max} for $\theta \geq 0$]
    	By Markov's inequality with exponent $\beta > 0$, we have
    	\begin{equation}
            \begin{aligned}
        		&\P \Big ( \max_{|h| \leq \log^{\theta}T} |\zeta(\tfrac 12 + \ii \tau + \ii h)| > (\log T)^{m(\theta) + \e} \Big ) \\
                &\hspace{10mm}\ll (\log T)^{-\beta m(\theta) - \beta \e} \cdot \mathbb{E} \Big [ \max_{|h| \leq \log^{\theta}T} |\zeta(\tfrac 12 + \ii \tau + \ii h)|^{\beta} \Big ].
            \end{aligned}
    	\end{equation}
        If we choose $\beta = 2 m(\theta) \geq 2$, we get, by picking $A$ large enough in Corollary~\ref{cor:discrzeta}, that the right-hand side of the above equation is
    	\begin{equation}
            \ll (\log T)^{- 2 m(\theta)^2 - 2 \e + 1 + \theta} \cdot \mathbb{E} \Big [ |\zeta(\tfrac 12 + \ii \tau)|^{\beta} \Big ].
    	\end{equation}
        By applying Proposition~\ref{pr:maksbound} if $\beta \leq 4$ (i.e., if $\theta \leq 3$) and Proposition~\ref{pr:soundbound} if $\beta > 4$ (i.e., if $\theta > 3$), the expectation is bounded by $(\log T)^{m(\theta)^2 + \e}$.
    	Therefore, the claim follows.
    \end{proof}

    \begin{proof}[Proof of Theorem~\ref{thm: freezing} for $\theta \geq 0$]
        For all $\beta > 0$, Markov's inequality yields
        \begin{equation}\label{eq:proof.thm.1.1.thete.bigger.1.beginning}
            \begin{aligned}
                &\P \Big ( \int_{|h| \leq \log^\theta T} |\zeta(\tfrac 12 + \ii \tau + \ii h)|^{\beta} \rd t \geq (\log T)^{f_{\theta}(\beta) + \e} \Big ) \\
                &\hspace{10mm}\ll (\log T)^{-f_{\theta}(\beta) - \e} \log^\theta T \cdot \E\Big[|\zeta(\tfrac 12 + \ii \tau)|^{\beta}\Big].
            \end{aligned}
        \end{equation}
        When $\beta\leq 2\sqrt{1+\theta}$, we have $f_{\theta}(\beta) = \beta^2/4 + \theta$, so the right-hand side of \eqref{eq:proof.thm.1.1.thete.bigger.1.beginning} is $\ll (\log T)^{-\e / 2}$ by Proposition~\ref{pr:maksbound} for $\theta \leq 3$ and by Proposition~\ref{pr:soundbound} for $\theta > 3$.

        It remains to sharpen the bound in the case $\beta > 2 \sqrt{1 + \theta}$. We use the Lebesgue measure of high points.
        Let $a,b > 0$. Two successive applications of Markov's inequality yield
        \begin{equation}
            \begin{aligned}
                &\P\Big ( \m \Big \{ |h| \leq \log^\theta T : |\zeta(\tfrac 12 + \ii \tau + \ii h)| > (\log T)^{a} \Big \} \geq (\log T)^{- a^2 + \theta +  \e} \Big ) \\[1mm]
                &  \ll (\log T)^{a^2 - \e} \cdot (\log T)^{-b a} \cdot \mathbb{E} \Big [ |\zeta(\tfrac 12 + \ii \tau)|^{b} \Big ].
            \end{aligned}
        \end{equation}
        Again, the optimal bound is at $b=2a$. Using Proposition~\ref{pr:maksbound} for $\theta \leq 3$ and Proposition~\ref{pr:soundbound} for $\theta > 3$ and choosing $b = 2 a$, we conclude that this is $\ll (\log T)^{-\e / 2}$ for $0<a\leq m(\theta)$.

        We now partition the integral according to the value of the integrand. Let $M \geq 1$ be an integer and $0\leq j\leq M$. Theorem~\ref{thm: max} (for $\theta\geq 0$) and the above imply that, with probability $1-\oo(1)$,
        \begin{equation}
            \begin{aligned}
                &\int_{|h| \leq \log^\theta T} |\zeta(\tfrac 12 + \ii \tau + \ii h)|^{\beta} \rd h \\[1mm]
                &\quad\ll \sum_{0 \leq j \leq M} (\log T)^{\beta ((j + 1)/ M) m(\theta)} \cdot (\log T)^{-(j / M)^2 m(\theta)^2 + \theta + \e}.
            \end{aligned}
        \end{equation}
        For $\beta > 2 \sqrt{1 + \theta} \geq 2 m(\theta)$, the last term $j = M$ dominates and, in particular, the above is bounded by
        \begin{equation}
            \ll (\log T)^{\beta m(\theta) - m(\theta)^2 + \theta + 2 \e} =  (\log T)^{\beta m(\theta) - 1 + 2 \e},
        \end{equation}
        provided that $M$ is chosen sufficiently large with respect to $\theta$, $\beta$ and $\e$.
    \end{proof}

    \begin{remark}
        In the above proof, we could have handled all $\beta$'s using the Lebesgue measure of high points in the spirit of a Gibbs variational principle.
        We chose to prove the case $\beta\leq 2\sqrt{1+\theta}$ directly as the proof is straightforward.
    \end{remark}

    \subsubsection{The case \texorpdfstring{$\theta < 0$}{theta smaller than 0}}

    \begin{proof}[Proof of Theorem~\ref{thm: max} for \texorpdfstring{$\theta < 0$}{theta smaller than 0}]
        We notice that
        \begin{equation}\label{eq:restr1}
            \begin{aligned}
                &\P\Big ( \max_{|h| \leq \log^\theta T} |\zeta(\tfrac 12 + \ii \tau + \ii h)| > (\log T)^{m(\theta) + \e} \Big )
                \\[1mm]
                &\quad\leq \P \Big ( \max_{|h| \leq \log^\theta T} \big|(\zeta \cdot e^{-\mathcal{P}_{|\theta|}})(\tfrac 12 + \ii \tau + \ii h)\big| > (\log T)^{m(\theta) + \e / 2} \Big ) \\
                &\hspace{10mm}+ \P \Big ( \max_{|h| \leq \log^\theta T} \big|e^{\mathcal{P}_{|\theta|}(\tfrac{1}{2} + \ii \tau + \ii h)}\big| > (\log T)^{\e / 2} \Big ).
            \end{aligned}
        \end{equation}
        By Corollary~\ref{cor:DP.cutting.0.to.theta}, the last term is $\oo(1)$ as $T \rightarrow \infty$.
        As in \eqref{eq:A.tilde.original}, let
        \begin{equation}\label{eq:A.tilde}
            \widetilde{\mathcal{A}}(T) =\Big\{\max_{|h| \leq \log^\theta T} \big|\mathcal{P}_{|\theta|}(\tfrac{1}{2} + \ii \tau + \ii h)\big| \leq 2 \log\log T\Big\}.
        \end{equation}
        By Corollary~\ref{cor:DP.cutting.0.to.theta} again, the probability of $\widetilde{\mathcal{A}}(T)$ is $1 - \oo(1)$.
        We let $\mathcal{A}_0(T)$ denote the subset of $\widetilde{\mathcal{A}}(T)$ for which the conclusion of Corollary~\ref{cor:union} holds. The probability of $\mathcal{A}_0(T)$ is $1 - \oo(1)$.
    	Then, by Markov's inequality, we have
    	\begin{equation}
            \begin{aligned}
                &\P \Big(\Big\{\max_{|h| \leq \log^\theta T} \big|(\zeta \cdot e^{-\mathcal{P}_{|\theta|}})(\tfrac 12 + \ii \tau + \ii h)\big| > (\log T)^{m(\theta) + \e / 2}\Big\} \cap \mathcal{A}_0(T)\Big)  \\[1mm]
                &\quad\leq (\log T)^{- 2 m(\theta) - \e} \cdot \E\Big[\max_{|h| \leq \log^\theta T} \big|(\zeta \cdot e^{-\mathcal{P}_{|\theta|}})(\tfrac 12 + \ii \tau + \ii h)\big|^2 \, \bb{1}_{\mathcal{A}_0(T)}\Big].
            \end{aligned}
    	\end{equation}
    	By Corollary~\ref{cor:union}, and since $m(\theta) = 1 + \theta$, this is
    	\begin{equation}
            \ll (\log T)^{-(1 + \theta) - \e} \cdot \E\Big[\big|(\zeta \cdot e^{-\mathcal{P}_{|\theta|}})(\tfrac 12 + \ii \tau)\big|^2 \, \bb{1}_{\widetilde{\mathcal{A}}(T)}\Big].
    	\end{equation}
    	By Proposition~\ref{pr:momenttwi}, this is
    	\begin{equation}
    		\ll (\log T)^{-(1 + \theta) - \e} \cdot (\log T)^{(1 + \theta) + \e/2} \ll (\log T)^{-\e/2},
    	\end{equation}	
    	as needed.
    \end{proof}

    \begin{proof}[Proof of Theorem~\ref{thm: freezing} for $\theta < 0$]
        Similarly to \eqref{eq:restr1}, we can restrict the integrand to $\zeta \cdot e^{-\mathcal{P}_{|\theta|}}$ as follows
        \begin{equation}\label{eq:toap}
            \begin{aligned}
            &\P\Big ( \int_{|h| \leq \log^\theta T} |\zeta(\tfrac 12 + \ii \tau + \ii h)|^{\beta} \rd h > (\log T)^{f_{\theta}(\beta) + \e} \Big ) \\[1mm]
            &\quad\leq \P\Big(\int_{|h| \leq \log^\theta T} \big|(\zeta \cdot e^{-\mathcal{P}_{|\theta|}})(\tfrac 12 + \ii \tau + \ii h)\big|^{\beta} \rd h  > (\log T)^{f_{\theta}(\beta) + \e / 2}\Big) + \oo(1).
            \end{aligned}
        \end{equation}
        As in \eqref{eq:A.tilde}, the probability is $\P(\widetilde{\mathcal{A}}(T)) = 1 - \oo(1)$, and by Markov's inequality, we have
        \begin{equation}
            \begin{aligned}
                &\P\Big(\Big\{\int_{|h| \leq \log^\theta T} \big|(\zeta \cdot e^{-\mathcal{P}_{|\theta|}})(\tfrac 12 + \ii \tau + \ii h)\big|^{\beta} \rd h  > (\log T)^{f_{\theta}(\beta) + \e / 2}\Big\} \cap \widetilde{\mathcal{A}}(T)\Big) \\[1mm]
                &\quad\ll (\log T)^{-f_{\theta}(\beta) - \e / 2} \cdot \log^\theta T \cdot \E\Big[\big|(\zeta \cdot e^{-\mathcal{P}_{|\theta|}})(\tfrac 12 + \ii \tau)\big|^{\beta} \, \bb{1}_{\widetilde{\mathcal{A}}(T)}\Big].
            \end{aligned}
        \end{equation}
        By Proposition~\ref{pr:momenttwi}, the above is
        \begin{equation}
            \ll (\log T)^{-(\beta^2 / 4)( 1 + \theta) - \e / 2} \cdot (\log T)^{(\beta^2 / 4) \cdot (1 + \theta) + \e / 4} \ll (\log T)^{-\e / 4}.
        \end{equation}
        This bound proves the claim for $\beta\leq 2$.

        It remains to refine the bound for the case $\beta > 2$. This proceeds in the same way as in the proof of Theorem~\ref{thm: freezing} in the case $\theta \geq 0$, with $\zeta$ replaced by $\zeta \cdot e^{-\mathcal{P}_{|\theta|}}$ restricted on the event $\widetilde{\mathcal{A}}(T)$.
        Namely, we have, for $0 < a \leq m(\theta)$,
        \begin{equation}
            \begin{aligned}
                &\P\Big(\Big\{\m\big\{|h| \leq \log^\theta T : |(\zeta \cdot e^{-\mathcal{P}_{|\theta|}}) (\tfrac 12 + \ii \tau + \ii h)| > (\log T)^{a}\big\} \geq (\log T)^{- a^2 + \theta +  \e}\Big\} \cap \widetilde{\mathcal{A}}(T)\Big) \\
                &\ll (\log T)^{a^2 - \e} \cdot (\log T)^{-b a} \cdot \mathbb{E} \Big[ |(\zeta \cdot e^{-\mathcal{P}_{|\theta|}})(\tfrac 12 + \ii \tau)|^{b} \, \bb{1}_{\widetilde{\mathcal{A}}(T)} \Big].
            \end{aligned}
        \end{equation}
        This is $\oo(1)$ by Proposition~\ref{pr:momenttwi} with the optimal choice $b=2a/(1+\theta)\leq 2$.
        The remainder is done exactly as  in the proof of Theorem~\ref{thm: freezing} in the case $\theta \geq 0$, by partitioning the integral over values of the integrand in the range $[0,m(\theta)+\e]$.
    \end{proof}

\section{Lower bounds}

    In this section, we prove:
    \begin{proposition}\label{prop:lower.bound.maximum.zeta}
        Let $\theta > - 1$ and $\e > 0$ be given.
        Then,
        \begin{equation}\label{eq:cor:lower.bound.maximum.zeta}
            \P\Big(\max_{|h| \leq \log^{\theta}T} |\zeta(1/2 + \ii \tau + \ii h)| > (\log T)^{m(\theta) - \e}\Big)=1-\oo(1).
        \end{equation}
    \end{proposition}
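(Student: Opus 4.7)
The plan is to deduce this lower bound on the maximum directly from the lower bound on moments, namely the lower-bound half of Theorem \ref{thm: freezing}, which is what Proposition \ref{prop:lower.bound.moments.zeta} establishes. The point is that if the moment integral is already as large as predicted, a trivial sup-vs-integral estimate forces the maximum to be at least almost as large, and the freezing form of $f_\theta$ in the supercritical regime is exactly what makes this deduction sharp at leading order.

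Concretely, for any $\beta > 0$ one has the elementary inequality
\begin{equation*}
\int_{-\log^\theta T}^{\log^\theta T} |\zeta(\tfrac 12 + \ii\tau + \ii h)|^{\beta} \rd h \;\leq\; 2 \log^\theta T \cdot \Big(\max_{|h| \leq \log^\theta T}|\zeta(\tfrac 12 + \ii\tau + \ii h)|\Big)^{\beta}.
\end{equation*}
Invoking Proposition \ref{prop:lower.bound.moments.zeta} with an auxiliary error $\e' > 0$, the event that the integral on the left is at least $(\log T)^{f_\theta(\beta) - \e'}$ has $\P$-probability $1 - \oo(1)$, and on that event
\begin{equation*}
\max_{|h| \leq \log^\theta T} |\zeta(\tfrac 12 + \ii\tau + \ii h)| \;\geq\; (\log T)^{(f_\theta(\beta) - \theta - \e')/\beta + \oo(1)}
\end{equation*}
after absorbing the factor $2\log^\theta T$.

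The final step is to choose $\beta$ large enough to push this exponent up to $m(\theta) - \e$. Working in the supercritical regime $\beta > \beta_c(\theta)$, where $f_\theta(\beta) = \beta m(\theta) - 1$, one has
\begin{equation*}
\frac{f_\theta(\beta) - \theta - \e'}{\beta} \;=\; m(\theta) - \frac{1 + \theta + \e'}{\beta},
\end{equation*}
which, since $\beta_c(\theta) < \infty$ for every $\theta > -1$, can be made to exceed $m(\theta) - \e$ by first taking $\beta$ sufficiently large (e.g.\ $\beta > 4(1+\theta)/\e$) and then fixing $\e' = 1$. Applying Proposition \ref{prop:lower.bound.moments.zeta} with this specific $\beta$ yields the claimed bound $(\log T)^{m(\theta) - \e}$ with probability $1 - \oo(1)$.

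Since this deduction is purely elementary and requires no new probabilistic input, the only substantive obstacle is Proposition \ref{prop:lower.bound.moments.zeta} itself. Its proof is the content of Section \ref{sect: lower proofs}, where the problem is pushed slightly off the critical line, mollified so as to be reduced to the Dirichlet polynomial $\Re \mathcal{P}_{1-\delta}(\sigma_0 + \ii\tau + \ii h)$, and then analyzed via the multiscale second-moment method exploiting the approximate branching-random-walk structure described in Section \ref{sect: outline}. Once that moments lower bound is in hand, the present proposition follows immediately from the three-line argument sketched above.
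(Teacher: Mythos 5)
Your proposal is correct and is essentially the paper's own proof: the paper likewise deduces the maximum lower bound from Proposition \ref{prop:lower.bound.moments.zeta} via the inequality $\max_{|h|\leq \log^\theta T}|\zeta| \geq \big(\tfrac{1}{2\log^\theta T}\int|\zeta|^\beta \rd h\big)^{1/\beta}$ and the supercritical formula $f_\theta(\beta)=\beta m(\theta)-1$, taking $\beta$ large with respect to $\e$ and $\theta$. (Only a cosmetic quibble: with $\e'=1$ you need $\beta>(2+\theta)/\e$, which your illustrative choice $\beta>4(1+\theta)/\e$ does not guarantee when $\theta$ is close to $-1$; this does not affect the argument, since $\beta$ may be taken as large as desired.)
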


    \begin{proposition}\label{prop:lower.bound.moments.zeta}
        Let $\theta > -1$, $\beta > 0$ and $\e > 0$ be given.
        Then,
        \begin{equation}\label{eq:prop:lower.bound.moments.zeta}
            \P\Big(\int_{-\log^{\theta}T}^{\log^{\theta}T} |\zeta(1/2 + \ii \tau + \ii h)|^{\beta} \rd h > (\log T)^{f_{\theta}(\beta) - \e}\Big)=1-\oo(1).
        \end{equation}
    \end{proposition}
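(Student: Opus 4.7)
The argument splits according to whether $\beta$ is below or above the critical exponent $\beta_c(\theta)$.

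For the subcritical regime $\beta \leq \beta_c(\theta)$, I would follow the three-step reduction sketched in Section~\ref{sect: outline}. The first step is to transfer the problem from the critical line to a line slightly to the right. Using Gabriel's inequality for the subharmonic function $\log |\zeta(s)|^{\beta}$ on a suitable rectangle, one shows that outside an event of $\oo(1)$ probability,
\begin{equation*}
\int_{-\log^{\theta}T}^{\log^{\theta}T} |\zeta(\sigma + \ii \tau + \ii h)|^{\beta}\,\rd h \ll \int_{-2\log^{\theta}T}^{2\log^{\theta}T} |\zeta(\tfrac{1}{2} + \ii \tau + \ii h)|^{\beta}\,\rd h + (\log T)^{-7},
\end{equation*}
uniformly for $\tfrac{1}{2} \leq \sigma \leq \tfrac{1}{2} + (\log T)^{\theta - 3\e}$. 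This reduction relies on constructing an entire function that is a good approximation to the indicator of the rectangle $\{\sigma + \ii u : |u| \leq (\log T)^{\theta},\, \tfrac{1}{2} \leq \sigma \leq \tfrac{1}{2} + (\log T)^{\theta-3\e}\}$ on the strip $\Re s \geq \tfrac 12$.

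The second step takes place at $\sigma_0 = \tfrac{1}{2} + 1/(\log T)^{1-\delta}$ for some small $\delta > 0$. Adapting the mollification strategy of \cite{ABBRS_2019}, I would show that outside an event of $\oo(1)$ probability,
\begin{equation*}
\int_{-\log^{\theta}T}^{\log^{\theta}T} |\zeta(\sigma_0 + \ii \tau + \ii h)|^{\beta}\,\rd h \gg \int_{-\log^{\theta}T}^{\log^{\theta}T} \exp\bigl(\beta\,\Re\mathcal{P}_{1-\delta}(\sigma_0 + \ii \tau + \ii h)\bigr)\,\rd h.
\end{equation*}
The problem is thereby reduced to proving a sharp lower bound for the Dirichlet polynomial integral on the right, for which the log-correlated covariance structure~\eqref{eqn: correlation} is available.

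The third step is a multiscale second moment method in the spirit of \cite{MR3380419}. Decompose $\Re\mathcal{P}_{1-\delta}(\sigma_0+\ii \tau+\ii h)$ into scale-increments $P_k(h)$ so that, at fixed $h$, the partial sums $\sum_{j\leq k} P_j(h)$ form a random walk with independent increments of variance close to $\tfrac{1}{2}$, and so that two walks indexed by $h$ and $h'$ share their first $k(h,h')\approx \log|h-h'|^{-1}$ increments. I would introduce a barrier event forcing each partial sum to remain below a slope-$\lambda$ line with $\lambda = \beta/2$, and then compute first and second moments of the restricted Gibbs mass. The barrier rules out atypical overlaps and renders the second moment comparable to the square of the first, yielding lower bounds $(\log T)^{(\beta^2/4)(1+\theta)+\theta}$ when $\theta \leq 0$ (only scales $k\geq |\theta|\log\log T$ contribute to branching, producing the $(1+\theta)$ factor) and $(\log T)^{\beta^2/4+\theta}$ when $\theta>0$ (the interval decomposes into $\log^{\theta}T$ nearly independent unit sub-intervals, producing the additive $\theta$).

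For the supercritical regime $\beta > \beta_c(\theta)$, the integral is dominated by the peak of $|\zeta|$. I would combine Proposition~\ref{prop:lower.bound.maximum.zeta} with a Lipschitz control derived from the discretization identity of Proposition~\ref{prop:union}: if the maximum of $|\zeta(\tfrac{1}{2}+\ii\tau+\ii h)|$ over $|h|\leq \log^{\theta}T$ exceeds $(\log T)^{m(\theta)-\e'}$, then $|\zeta|$ remains at least $(\log T)^{m(\theta)-2\e'}$ on a sub-interval of width $\gg (\log T)^{-1-\e'}$, and integration returns the freezing exponent $\beta m(\theta) - 1$. The hardest part will be the third step in the $\theta>0$ regime, where the underlying probabilistic model is a family of $\log^{\theta}T$ almost-independent branching random walks: one must control the residual cross-correlations between sub-intervals tightly enough for the truncated second moment to capture the correct leading order. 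The Gabriel transfer in the first step is a secondary technical obstacle when $\theta$ is close to $-1$ and the approximating rectangle becomes extremely narrow.
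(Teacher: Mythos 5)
Your subcritical argument follows the paper's architecture (Gabriel off-axis transfer to $\sigma_0$, mollification \`a la \cite{ABBRS_2019}, then a multiscale second moment computation for $\Re\mathcal{P}_{1-\delta}$), with one methodological difference: the paper does not use a barrier event on the Gibbs mass. Instead it lower-bounds the integral by $(\log T)^{\beta\gamma}\cdot\m\{h:\sum_j P_j(h)>\gamma\log\log T\}$, proves the measure of high points is $\geq(\log T)^{\mathcal{E}_\theta(\gamma)-\eta}$ via Paley--Zygmund applied to the event that \emph{each} of the $K$ increments separately exceeds its proportional share $x_j$ (Kistler's finitely-many-scales device, Propositions \ref{prop: two point couple}--\ref{prop: prob decouple}), and then optimizes over $\gamma$. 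This sidesteps the delicate truncated second moment of the partition function near $\beta_c$; your barrier route is viable for leading order but strictly harder to execute. Crucially, the optimization over $\gamma\in[\e,m(\theta)]$ already produces the supercritical exponent $\beta m(\theta)-1$ when $\beta>\beta_c(\theta)$ (the maximizer saturates at $\gamma=m(\theta)$, where $\mathcal{E}_\theta(m(\theta))=-1$), so no separate supercritical argument is needed.

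Your separate treatment of $\beta>\beta_c(\theta)$ contains a genuine gap. First, it is circular within the paper's logic: Proposition \ref{prop:lower.bound.maximum.zeta} is \emph{deduced from} Proposition \ref{prop:lower.bound.moments.zeta} via \eqref{eq:prop:lower.bound.zeta.maximum.eq.1}, so you cannot invoke it here without an independent proof of the maximum lower bound. Second, and more seriously, the persistence step does not close unconditionally. To show that a value $M=(\log T)^{m(\theta)-\e'}$ persists on an interval of width $\gg(\log T)^{-1-\e'}$ you need a derivative bound of the form $|\zeta'|\ll\log T\cdot\sup|\zeta|$ on a neighborhood, and hence an a priori \emph{upper} bound on the local supremum. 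The sharp bound $\sup|\zeta|\leq(\log T)^{m(\theta)+\e}$ is exactly the conditional part of Theorem \ref{thm: max} (requiring RH for $\theta>3$), while the unconditional crude bound of Corollary \ref{cor:maxbound} only gives $\sup|\zeta|\ll(\log T)^{2+\theta+\oo(1)}$, which yields a persistence interval of width $(\log T)^{m(\theta)-\theta-3-\oo(1)}$ and hence a lower bound $(\log T)^{\beta m(\theta)+m(\theta)-\theta-3-\oo(1)}$ --- short of $(\log T)^{\beta m(\theta)-1-\e}$ by a fixed power of $\log T$ since $m(\theta)<2+\theta$ for all $\theta>-1$. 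The fix is simply to drop this branch and let the high-points computation at $\gamma=m(\theta)$ handle the supercritical case, as the paper does.
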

    The lower bound for the maximum will be an easy consequence of the lower bound for the moments.
    The idea is to approximate zeta by an appropriate Dirichlet polynomial.
    This can be done with good precision off-axis, cf.\ Section~\ref{sect: off-axis}.
    The approximation to a Dirichlet polynomial is then shown in Section~\ref{sect: mollification}.
    The lower bound for the moments of the Dirichlet polynomials is proved in Section~\ref{sect: bounds D} using Kistler's multiscale second moment method.
    Finally, the two propositions above are proved in Section~\ref{sect: lower proofs}.

    \subsection{Reduction off-axis}\label{sect: off-axis}

    In \cite{ABBRS_2019}, the maximum on a short interval of the critical line was compared to the one on a short interval away from the critical line by exploiting the analyticity of $\zeta$ away from its pole. More precisely, a value off-axis can be seen as an average of zeta over the critical line weighed by the corresponding Poisson kernel.
    This approach could also be used in the case of the moments by using the subharmonicity of the function $z\mapsto |z|^\beta$.
    We choose to apply a different method based on the following convexity theorem of Gabriel, which handles error terms more efficiently.

	\begin{proposition}[Theorem 2 of \cite{gabriel_1927} in the special case $a = b = 1$]\label{prop:thm.gabriel.1927}
        Let $F$ be a complex valued function which is analytic in the strip $\alpha \leq \Re z \leq \beta$. Suppose that $|F(z)|$ tends to zero as $|\Im z| \rightarrow \infty$, uniformly for $\alpha \leq \Re z \leq \beta$. Then, for any $\gamma \in [\alpha, \beta]$ and any $p > 0$,
		\begin{equation}
			I(\gamma) \leq I(\alpha)^{(\beta - \gamma) / (\beta - \alpha)} \cdot I(\beta)^{(\gamma - \alpha) / (\beta - \alpha)},
		\end{equation}
		where
		\begin{equation}\label{eq:def.I.Gabriel}
            I(\sigma) \leqdef \int_{\mathbb{R}} |F(\sigma + \ii t)|^p \rd t.
		\end{equation}
	\end{proposition}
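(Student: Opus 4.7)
The plan is to prove the asserted three-line log-convexity by the standard trick of first reducing to the case where the two boundary integrals are equal, and then deducing the middle-line bound from subharmonicity of $|F|^k$. Multiplication by a real exponential enables the reduction, and the Poisson representation on the strip supplies the equal-boundary (arithmetic-mean) case.

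For the reduction, one may assume $0 < I(\alpha), I(\beta) < \infty$: if either boundary integral vanishes then $F \equiv 0$ on a vertical line and hence on the whole strip by the identity theorem, while the infinity cases are vacuous. Set $\lambda \leqdef \frac{1}{k(\beta-\alpha)} \log(I(\alpha)/I(\beta)) \in \R$ and $F_\lambda(z) \leqdef e^{\lambda z} F(z)$. Since $\lambda$ is real, $|F_\lambda(\sigma + \ii t)|^k = e^{k\lambda \sigma}|F(\sigma + \ii t)|^k$, so with $I_\lambda(\sigma) = e^{k\lambda \sigma} I(\sigma)$ the choice of $\lambda$ gives $I_\lambda(\alpha) = I_\lambda(\beta) \reqdef M$. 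The uniform decay $|F_\lambda(z)| \to 0$ as $|\Im z| \to \infty$ on $\alpha \leq \Re z \leq \beta$ is inherited from $F$ because $|e^{\lambda z}| = e^{\lambda \Re z}$ is bounded on the strip.

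Next I would establish the arithmetic-mean inequality $I_\lambda(\gamma) \leq \tfrac{\beta-\gamma}{\beta-\alpha} I_\lambda(\alpha) + \tfrac{\gamma-\alpha}{\beta-\alpha} I_\lambda(\beta) = M$. The function $u \leqdef |F_\lambda|^k$ is non-negative, subharmonic on the strip (since $\log|F_\lambda|$ is subharmonic and $x \mapsto e^{kx}$ is convex increasing), and tends to $0$ as $|\Im z| \to \infty$ uniformly. Conformally mapping the strip to the upper half-plane and invoking the standard Poisson representation for non-negative subharmonic functions with this boundary decay, one obtains
\begin{equation*}
    u(\gamma + \ii t_0) \leq \int_\R P_\alpha(\gamma, t_0 - t)\, u(\alpha + \ii t) \, \rd t + \int_\R P_\beta(\gamma, t_0 - t)\, u(\beta + \ii t) \, \rd t,
\end{equation*}
where $P_\alpha(\gamma, \cdot)$ and $P_\beta(\gamma, \cdot)$ are the strip Poisson kernels at $\gamma + \ii t_0$, with total masses $(\beta-\gamma)/(\beta-\alpha)$ and $(\gamma-\alpha)/(\beta-\alpha)$ respectively (these weights are forced by the linear harmonic function $z \mapsto (\beta - \Re z)/(\beta - \alpha)$). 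Integrating in $t_0 \in \R$ and applying Fubini, justified by positivity, yields the desired AM-inequality; undoing the multiplication by $e^{\lambda z}$ then gives
\begin{equation*}
    I(\gamma) = e^{-k\lambda\gamma} I_\lambda(\gamma) \leq e^{-k\lambda\gamma} M = e^{-k\lambda(\gamma-\alpha)} I(\alpha) = I(\alpha)^{(\beta-\gamma)/(\beta-\alpha)}\, I(\beta)^{(\gamma-\alpha)/(\beta-\alpha)}.
\end{equation*}

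The only delicate point is justifying the Poisson integral representation and pinning down the harmonic-measure weights. After the conformal map of the strip onto a half-plane, the uniform boundary decay hypothesis translates into decay near the two image points of $\Im z = \pm \infty$; this kills any potential Phragm\'en--Lindel\"of-type contribution from infinity and legitimizes the boundary integral formula for $u$. The weights themselves follow from a direct computation of the harmonic measure of the strip (equivalently, from the linear solution of the Dirichlet problem on the strip with boundary values $1$ on one side and $0$ on the other).
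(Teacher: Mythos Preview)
The paper does not supply a proof of this proposition; it is quoted as Theorem~2 of Gabriel (1927) in the special case $a=b=1$ and used as a black box to derive Corollary~3.2. There is therefore no ``paper's own proof'' to compare against.

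Your argument is correct and is essentially the classical route to such three-lines $L^k$ inequalities: normalize the boundary integrals by the exponential twist $F_\lambda(z)=e^{\lambda z}F(z)$, use subharmonicity of $|F_\lambda|^k$ together with the Poisson (harmonic-measure) majorization on the strip to obtain the arithmetic-mean bound $I_\lambda(\gamma)\le \tfrac{\beta-\gamma}{\beta-\alpha}I_\lambda(\alpha)+\tfrac{\gamma-\alpha}{\beta-\alpha}I_\lambda(\beta)$, then integrate in $t_0$ and untwist. The identification of the total masses of the strip Poisson kernels with the linear harmonic-measure weights is exactly right, and the uniform decay hypothesis is precisely what is needed to rule out a Phragm\'en--Lindel\"of contribution from $|\Im z|\to\infty$ and to justify both the pointwise majorization $u\le h$ and the use of Fubini.

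One minor caveat: in the degenerate case $I(\alpha)=0$ you invoke the identity theorem to conclude $F\equiv 0$. This is immediate if ``regular in the strip'' is read as holomorphic on an open neighbourhood of the closed strip; if it only means holomorphic in the open strip and continuous up to the boundary, a short extra step (e.g.\ applying the inequality on a slightly narrower strip and letting the edge tend to $\alpha$, or Schwarz reflection) is needed. This does not affect the substance of the argument.
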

	
    This theorem has the following useful consequence.

    \begin{corollary}\label{cor:gabriel}
        Let $F$ be a complex valued function which is analytic in the strip $\tfrac 12 \leq \Re z $.
        Suppose that $|F(z)|$ tends to zero as $|\Im z| \rightarrow \infty$, uniformly for $\tfrac 12 \leq \Re z $.
    	Suppose also that $I(\sigma) \rightarrow 0$ as $\sigma \rightarrow \infty$. Then, for any $\sigma > \tfrac 12$ and any $p > 0$,
    	\begin{equation}
    		I(\sigma) \leq I(\tfrac 12).
    	\end{equation}
    \end{corollary}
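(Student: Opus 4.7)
The plan is to deduce from Proposition \ref{prop:thm.gabriel.1927} that $\sigma \mapsto \log I(\sigma)$ is convex on $[1/2,\infty)$, and then combine convexity with the decay assumption $I(\sigma) \to 0$ to get monotonicity, which yields the claim.

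First, I would fix an arbitrary $\beta' > 1/2$ and apply Proposition \ref{prop:thm.gabriel.1927} on the strip $[\alpha, \beta] = [1/2, \beta']$. The uniform vanishing of $|F(z)|$ as $|\Im z| \to \infty$ in the half-plane $\Re z \geq 1/2$ restricts to the required uniform vanishing in this substrip, so the hypotheses are met. For every $\gamma \in [1/2, \beta']$, Gabriel's inequality gives
\begin{equation*}
    I(\gamma) \leq I(\tfrac 12)^{(\beta' - \gamma)/(\beta' - 1/2)} \cdot I(\beta')^{(\gamma - 1/2)/(\beta' - 1/2)}.
\end{equation*}
Since $\beta' > 1/2$ is arbitrary, taking logarithms shows that $\varphi(\sigma) \vcentcolon= \log I(\sigma)$ (allowing the value $-\infty$ if some $I(\sigma) = 0$) is convex on $[1/2, \infty)$.

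Next, I would argue that $\varphi$ must be non-increasing on $[1/2, \infty)$, using the hypothesis $I(\sigma) \to 0$, i.e.\ $\varphi(\sigma) \to -\infty$, as $\sigma \to \infty$. Suppose for contradiction that there exists $\sigma_1 > 1/2$ with $\varphi(\sigma_1) > \varphi(1/2)$. For any $\sigma > \sigma_1$, writing $\sigma_1$ as a convex combination of $1/2$ and $\sigma$ and applying convexity yields
\begin{equation*}
    \varphi(\sigma_1) \leq \frac{\sigma - \sigma_1}{\sigma - 1/2}\, \varphi(\tfrac 12) + \frac{\sigma_1 - 1/2}{\sigma - 1/2}\, \varphi(\sigma),
\end{equation*}
from which $\varphi(\sigma) \geq \varphi(\sigma_1) + \frac{\varphi(\sigma_1) - \varphi(1/2)}{\sigma_1 - 1/2}(\sigma - \sigma_1)$. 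Since the slope is strictly positive, the right-hand side tends to $+\infty$ as $\sigma \to \infty$, contradicting $\varphi(\sigma) \to -\infty$. Hence $\varphi(\sigma) \leq \varphi(1/2)$ for all $\sigma \geq 1/2$, which is the desired inequality $I(\sigma) \leq I(1/2)$.

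There is no serious obstacle here; the only mild subtlety is to handle the $I(\beta') \to 0$ limit correctly, which is precisely why I pass through log-convexity rather than trying to take $\beta' \to \infty$ directly in Gabriel's inequality (where the exponent of $I(\beta')$ tends to $0$ while $I(\beta') \to 0$, giving an indeterminate $0^0$ form without extra quantitative control on the decay).
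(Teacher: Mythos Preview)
Your proof is correct and rests on the same tool (Gabriel's inequality) as the paper, but organizes the argument differently. The paper proceeds more directly: it lets $\sigma^\star$ be a point where $I$ attains its supremum over $[1/2,\infty)$ (finite since $I(\sigma)\to 0$), and if $\sigma^\star > 1/2$, applies Gabriel on $[1/2,\sigma^\star+\varepsilon]$ with $\gamma=\sigma^\star$ to get $I(\sigma^\star)\le I(1/2)^{\lambda} I(\sigma^\star+\varepsilon)^{\mu} \le I(1/2)^{\lambda} I(\sigma^\star)^{\mu}$, hence $I(\sigma^\star)\le I(1/2)$. Your route---extract log-convexity from Gabriel and combine it with the decay $\varphi(\sigma)\to -\infty$ to force monotonicity---is slightly more conceptual and sidesteps any appeal to the supremum being attained, at the cost of an extra line of calculus. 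One cosmetic remark: what you actually derive from Gabriel with $\alpha=1/2$ fixed is only that $\varphi$ lies below every chord emanating from the point $1/2$, not full convexity; but that is precisely the inequality you invoke in the contradiction step, so the argument is sound as written (and full convexity would follow anyway by also letting $\alpha\ge 1/2$ vary in Gabriel).
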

	
    \begin{proof}
    	Let $\sigma^{\star}$ be such that
    	\begin{equation}\label{eq:def.sigma.star}
    		I(\sigma^{\star}) = \sup_{\sigma \geq 1/2} I(\sigma).
    	\end{equation}
        Note that because of the assumption that $I(\sigma) \rightarrow 0$ as $\sigma \rightarrow \infty$, the above $\sigma^{\star}$ has a finite value. Let $\e > 0$ be given. If $\sigma^{\star} = \tfrac 12$, then we are done. If $\sigma^{\star} \neq \tfrac 12$, then by Proposition~\ref{prop:thm.gabriel.1927} applied with $\gamma = \sigma^{\star}$, $\alpha = \frac{1}{2}$ and $\beta = \sigma^{\star} + \e$, we get
    	\begin{equation}
    		I(\sigma^{\star}) \leq I(\tfrac 12)^{\lambda} \cdot I(\sigma^{\star} + \e)^{\mu},
    	\end{equation}
        for some appropriate $\lambda, \mu > 0$ that satisfy $\lambda + \mu = 1$.

    	Therefore, by definition of $\sigma^{\star}$ in \eqref{eq:def.sigma.star},
    	\begin{equation}
    		I(\sigma^{\star}) \leq I(\tfrac 12)^{\lambda} \cdot I(\sigma^{\star})^{\mu},
    	\end{equation}
        and hence $I(\sigma^{\star})^{\lambda} \leq I(\tfrac 12)^{\lambda}$.
        Since $\lambda > 0$, we get $I(\sigma^{\star}) \leq I(\tfrac 12)$.
        The claim follows from \eqref{eq:def.sigma.star}.
    \end{proof}

    We now construct a special analytic approximation for the indicator function of the rectangle $\mathcal{R} = \{ \sigma + \ii v : \tfrac 12 \leq \sigma \leq \tfrac 12 + K, |v| \leq L\}$ for $K,L > 0$. The effective width of the indicator function will be $K\approx L/\Delta$ in the statement below.

	\begin{lemma}\label{le:constr}
        Let $b_1\in (0,1)$ and $\Delta, L, A, b_2 > 0$ be given. There exists an entire function $\Phi_{\Delta, L}(z)$ such that, for $z = \sigma + \ii v$ with $\sigma \geq \tfrac 12$ and $v \in \mathbb{R}$,
    	\begin{enumerate}[(i)]
    		\item For $|v| \geq (1 + b_2) L$, uniformly in $\sigma\geq \frac{1}{2}$,
    		$
    			\Phi_{\Delta, L} (z) \ll_{A} b_2^{-A} \Delta^{1-A}.
    		$\vspace{1mm}
    		\item For any $|v| \leq (1 - b_1) L$,
    		$
    			|\Phi_{\Delta, L}(z)| = 1 + \OO_{b_1,A}(\Delta^{-A}) + \OO( (\sigma - \tfrac 12) \tfrac{\Delta^2}{L} ).
    		$\vspace{1mm}
    		\item For any $|v| \leq (1 + b_2) L$,
    		$
    			|\Phi_{\Delta, L}(z)| \ll 1 + (\sigma - \tfrac 12) \tfrac{\Delta^2}{L}.
    		$\vspace{1.3mm}
    		\item
    		$
    			\Phi_{\Delta, L}(z) \rightarrow 0
    		$\vspace{1mm}
    			uniformly in $v$ as $\sigma \rightarrow \infty$.
    	\end{enumerate}
	\end{lemma}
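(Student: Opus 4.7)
The plan is to build $\Phi_{\Delta, L}$ as an entire function of the factored form
\[
\Phi_{\Delta, L}(z) \leqdef g\bigl(-\ii(z - \tfrac{1}{2})\bigr)\cdot e^{-\alpha(z - 1/2)},
\]
where $g$ is a Paley--Wiener smoothing of $\chi_{[-L, L]}$ that controls the shape of $|\Phi_{\Delta, L}|$ on the critical line $\sigma = \tfrac12$, while the exponential factor enforces the uniform decay in $v$ as $\sigma \to \infty$ demanded by property~(4). Concretely, I would fix a Schwartz function $\rho \colon \R \to \C$ with $\int_\R \rho = 1$ whose Fourier transform $\hat\rho$ is smooth and supported in $[-1, 1]$; by Paley--Wiener, $\rho$ extends to an entire function of exponential type $1$ satisfying $|\rho(x + \ii y)| \ll_A (1 + |x|)^{-A}\,e^{|y|}$ for every $A \geq 0$. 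Set $\tau = c\Delta/L$ for a small fixed $c > 0$ and define
\[
g(w) = \int_{-L}^{L} \tau\, \rho(\tau(w - u))\, \rd u, \quad w \in \C.
\]
Routine tail estimates based on the Schwartz decay of $\rho$ yield $\|g\|_{L^\infty(\R)} \ll 1$, $|g(v) - 1| \ll_A \Delta^{-A}$ for $|v| \leq (1 - \e) L$, and $|g(v)| \ll_A ((K - 1)\Delta)^{-A}$ for $|v| > KL$ with $K > 1 + \e$. Finally, I would take $\alpha = \Delta^2/L$, which satisfies $\alpha > \tau$ once $\Delta \geq c^{-1}$ (the remaining range of small $\Delta$ is trivial, as the required bounds become uniform constants).

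With this setup, for $z = \sigma + \ii v$ with $\sigma \geq \tfrac12$ one has
\[
|\Phi_{\Delta, L}(z)| = |g(v - \ii(\sigma - \tfrac12))|\, e^{-\alpha(\sigma - 1/2)},
\]
and each of the four claimed bounds follows relatively quickly. The Paley--Wiener estimate $|g(v - \ii(\sigma - \tfrac12))| \leq \|g\|_\infty\, e^{\tau(\sigma - 1/2)}$ combined with $\alpha > \tau$ yields $|\Phi_{\Delta, L}(z)| \leq e^{(\tau - \alpha)(\sigma - 1/2)}$, giving property~(4) immediately and property~(3) since then $|\Phi_{\Delta, L}| \ll 1$ uniformly in $\sigma \geq \tfrac12$. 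For property~(1), plugging the pointwise bound on $|\rho(x + \ii y)|$ directly into the integral defining $g(v - \ii(\sigma - \tfrac12))$ and using that $|v - u| \geq (K - 1) L$ for all $u \in [-L, L]$ produces the required decay $((K - 1)\Delta)^{-A}$, once the factor $e^{(\tau - \alpha)(\sigma - 1/2)} \leq 1$ is absorbed. For property~(2), Bernstein's inequality $\|g'\|_\infty \leq \tau \|g\|_\infty$ gives $g(v - \ii(\sigma - \tfrac12)) = g(v) + O((\sigma - 1/2)\tau)$, and combining this with $|g(v)| = 1 + O(\Delta^{-A})$ and the Taylor expansion of $e^{-\alpha(\sigma - 1/2)}$ produces the claimed asymptotic.

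The main technical obstacle is the joint calibration of the two parameters $\tau$ and $\alpha$: we need $\alpha$ strictly larger than $\tau$ (without which property~(4) fails), yet also $\alpha = O(\Delta^2/L)$ is forced by the shape of the error in property~(2). This dictates the scaling $\tau \asymp \Delta/L$ and $\alpha \asymp \Delta^2/L$. A secondary subtlety is that property~(2) requires \emph{two-sided} control on $|\Phi_{\Delta, L}|$, so the one-sided bound $|g(v - \ii(\sigma - \tfrac12))| \leq \|g\|_\infty\,e^{\tau(\sigma - 1/2)}$ alone is not enough; the Bernstein-based expansion of $g$ about $v$ is what allows us to pin down both sides, at the mild cost of restricting attention to the range $\alpha(\sigma - \tfrac12) \lesssim 1$ (the complementary range being handled by the trivial bound $|\Phi_{\Delta, L}| \leq 1$).
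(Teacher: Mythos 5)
Your construction is correct, and it reaches the four properties by a genuinely different mechanism than the paper. The paper also forms $\int_{-L}^{L}(\text{kernel})(z-\ii u)\,\rd u$ at scale $L/\Delta$, but its kernel $\delta_{\eta}(z)=\eta\int_0^\infty e^{-2\pi(z-1/2)x}V(\eta x)\,\rd x$ is a \emph{one-sided} Laplace transform, so the decay as $\sigma\to\infty$ (property 4) is built into the kernel itself; the price is that on the critical line the kernel integrates to $V(0)$ rather than $1$, which forces the extra modulation $e^{-2\pi \ii u\Delta/L}$ to re-center the normalization at $V(1)=1$ (so the paper's $\Phi$ approximates $e^{-2\pi\ii v\Delta/L}\bb{1}_{[-L,L]}(v)$ on the line). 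You instead use a two-sided band-limited kernel, which normalizes to $1$ on the line automatically but grows like $e^{\tau|\Im w|}$ off it, and you kill that growth with the explicit factor $e^{-\alpha(z-1/2)}$, $\alpha>\tau$, which has modulus one on the line. The calibration $\tau\asymp\Delta/L$, $\alpha\asymp\Delta^2/L$ reproduces exactly the paper's error term in property (2), and your Phragm\'en--Lindel\"of bound $|g(v-\ii(\sigma-\tfrac12))|\le\|g\|_\infty e^{\tau(\sigma-1/2)}$ is arguably a cleaner route to properties (3) and (4) than the paper's Taylor expansion of $e^{-2\pi(\sigma-1/2)x}$. Two small points to tighten: the Bernstein step needs the derivative bound \emph{off} the real axis, i.e.\ $|g'(x+\ii y)|\le\tau\|g\|_\infty e^{\tau|y|}$ (apply Phragm\'en--Lindel\"of to $g'$, or just differentiate under the integral and use the decay of $\rho'$), which is harmless in your restricted range $\alpha(\sigma-\tfrac12)\lesssim 1$ where $\tau(\sigma-\tfrac12)\lesssim c/\Delta$; and the ``small $\Delta$'' range is not entirely vacuous, since properties (1) and (4) still assert genuine decay in $K$ and in $\sigma$ there --- but capping $\Delta$ at $1$ in the definitions of $\tau$ and $\alpha$ handles it, and the lemma is only ever applied with $\Delta=\log^{\e}T\to\infty$.
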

	
	\begin{proof}    	
    	Let $V$ be a smooth function, compactly supported in $(0, \infty)$ and such that $V(1) = 1$.
    	Given a parameter $\eta > 0$ and given $z \in \mathbb{C}$ with $\Re z \geq \tfrac 12$ and $u \in \mathbb{R}$, consider the following function:
    	\begin{equation}
    		\delta_{\eta}(z) =  \int_{0}^{\infty} e^{- 2\pi (z - \frac{1}{2}) x} \cdot V \big ( \eta x \big ) \, \eta \rd x.
    	\end{equation}
        Then $\delta_{\eta}(z)$ defines an entire function of exponential type. By integration by parts, we see that
        \begin{equation}\label{eqn: recall}
            \delta_{\eta}(z) \ll_{A} (1 + |z - \tfrac 12| \eta^{-1})^{-A},
        \end{equation}
         for any $A > 0$ and uniformly in $\Re z \geq \tfrac 12$.  Therefore, we may think of $\delta_{\eta}(z)$ as localizing to $z = \tfrac 12 + \OO(\eta)$. Furthermore, notice that if $z = \tfrac 12 + \ii v$ and $u \in \mathbb{R}$, then
        \begin{equation}
        	\delta_{\eta}(z - \ii u) = \widehat{V}((v - u) \eta^{-1}),
        \end{equation}
        and for $z = \sigma + \ii v$, we have by a Taylor expansion of the exponential,
        \begin{equation}\label{eq:deltabound2}
            \begin{aligned}
                \delta_{\eta}(z  - \ii u)
                & = \int_{0}^{\infty} e^{- 2\pi (\sigma - \frac{1}{2} + \ii (v - u)) x} \cdot V \big ( \eta x \big ) \, \eta \rd x \\[0.5mm]
                & = \int_{0}^{\infty} e^{- 2\pi \ii (v - u) x} \cdot \Big ( 1 + \OO \big ( (\sigma - \tfrac 12) x\big ) \Big ) \cdot V \big ( \eta x \big ) \, \eta \rd x \\[1.5mm]
                & = \widehat{V}((v - u) \eta^{-1}) + \OO \big ( ( \sigma - \tfrac 12) \eta^{-1} \big ).
            \end{aligned}
        \end{equation}
        Finally, for $z = \sigma + \ii v$ with $\sigma \geq \tfrac{1}{2}$, we have from \eqref{eqn: recall} that
        \begin{equation}\label{eq:deltabound}
        	|\delta_{\eta}(z - \ii u)| \ll_{A} \frac{1}{1 + (|v - u| \eta^{-1})^A}.
        \end{equation}

        The candidate function is for $\eta=L/\Delta$,
        \begin{equation}\label{eq:candidate.Phi}
            \Phi_{\Delta, L}(z) =\frac{\Delta}{L} \int_{-L}^{L} e^{-2\pi \ii u \frac{\Delta}{L}} \cdot \delta_{L/\Delta} (z - \ii u) \rd u.
    	\end{equation}
    	
        We will now describe some of the features of this function. Write $z = \sigma + \ii v$ with $\sigma \geq \tfrac{1}{2}$. Using the bound \eqref{eq:deltabound}, we see that, if $|v| > (1 + b_2) L$ with $b_2 > 0$, then
    	\begin{equation}
            \Phi_{\Delta, L}(z) \ll_{A} \frac{\Delta}{L} \int_{-L}^{L} \frac{1}{1 + (|v - u| \, \tfrac{\Delta}{L} )^{A}} \, \rd u \ll_{A} b_2^{-A} \Delta^{1 - A}.
    	\end{equation}
        This gives the first claim.

        If $|v| \leq (1 - b_1) L$, then by \eqref{eq:candidate.Phi} and \eqref{eq:deltabound2}, we have
    	\begin{equation}
            \Phi_{\Delta, L}(z) = \frac{\Delta}{L} \int_{-L}^{L} e^{-2\pi \ii u \frac{\Delta}{L}} \cdot \widehat{V} \big ((v - u) \, \tfrac{\Delta}{L} \big ) \rd u + \OO \big ( (\sigma - \tfrac 12) \tfrac{\Delta^2}{L} \big ).
    	\end{equation}
        It follows that if $\tfrac 12 \leq \sigma $ and $|v| \leq (1 - b_1) L$, then due to the rapid decay of $\widehat{V}$, we have
    	\begin{equation}
            \begin{aligned}
                \Phi_{\Delta , L}(z)
                & = e^{-2\pi \ii v \frac{\Delta}{L}} \int_{v \frac{\Delta}{L} - \Delta}^{v \frac{\Delta}{L} + \Delta} e^{2\pi \ii u} \cdot \widehat{V}(u) \rd u +  \OO \big ( (\sigma - \tfrac 12) \tfrac{\Delta^2}{L} \big ) \\[1mm]
                & = e^{-2\pi \ii v \frac{\Delta}{L}} + \OO_{b_1,A}(\Delta^{-A}) + \OO \big ( (\sigma - \tfrac 12) \tfrac{\Delta^2}{L} \big ),
            \end{aligned}
    	\end{equation}
        by Fourier inversion and the assumption that $V(1) = 1$. This proves the second claim.
        If $\tfrac 12 \leq \sigma \ll 1$ and $|v| \leq (1 + b_2) L$, then we have the bound
    	\begin{equation}
    		|\Phi_{\Delta, L}(z)| \ll \int_{\mathbb{R}} |\widehat{V}(u)| \rd u + \OO \big ( ( \sigma - \tfrac 12) \tfrac{\Delta^2}{L} \big ),
        \end{equation}
        which proves the third claim.

        Finally, notice that $\delta_{L/\Delta}(z - \ii u) \rightarrow 0$ uniformly as $\sigma \rightarrow \infty$ by \eqref{eqn: recall}, which implies the last claim that $\Phi_{\Delta, L}(z) \rightarrow 0$ uniformly in $v \in \mathbb{R}$ as $\sigma \rightarrow \infty$. 	
	\end{proof}

	The following proposition relates the moments off and on axis.

	\begin{proposition}\label{prop: LB}
        Let $\theta > -1$, $\beta > 0$, $0 < \e \leq 1$ and $T \geq 10^9$ be given.
        Then, for all $\tfrac 12 \leq \sigma \leq \tfrac 12 + (\log T)^{\theta - 3 \e}$, the event
		\begin{equation}
		\label{eqn: int off axis}
            \int_{-\log^{\theta}T}^{\log^{\theta}T} |\zeta(\sigma + \ii \tau + \ii u)|^{\beta} \rd u \ll \int_{-3\log^{\theta}T}^{3\log^{\theta}T} |\zeta(\tfrac 12 + \ii \tau + \ii u)|^{\beta} \rd u + \frac{1}{(\log T)^{96}}
		\end{equation}
        has probability $1 - \oo(1)$.
	\end{proposition}

    \begin{proof}
    	Let
        \begin{equation}
            D(\sigma + \ii \tau) =
            \begin{cases}
                \sum_{n \leq T^{1 + \e}} n^{-\sigma - \ii \tau} w_k \Big ( \frac{e^k n}{T^{1 + \e}} \Big ), &\text{if } 1/2 \leq \sigma \leq 2, \\
                \sum_{n \leq T}  n^{-\sigma - \ii \tau}, &\text{if } \sigma > 2.
            \end{cases}
        \end{equation}
        with $0 < \e \leq 1$ and $k > 10 / \e$ a fixed integer.
        Using the $\zeta$ approximation in Lemma~\ref{zetapprox}, we have, for $T \leq \tau \leq 2T$ and $\tfrac 12 \leq \sigma \leq \tfrac 12 + (\log T)^{\theta - 3 \e}$,
    	\begin{equation}\label{eq:approx.functional.equation.2}
    		\zeta(\sigma + \ii \tau) = D(\sigma + \ii \tau) + \OO(T^{-1}).
    	\end{equation}
    	Therefore, it suffices to establish \eqref{eqn: int off axis} for $\zeta$ replaced by $D$:
    	\begin{equation}\label{eqn: to prove off axis}
            \int_{-\log^{\theta}T}^{\log^{\theta}T} |D(\sigma + \ii \tau + \ii u)|^{\beta} \rd u \ll \int_{-3\log^{\theta}T}^{3\log^{\theta}T} |D(\tfrac 12 + \ii \tau + \ii u)|^{\beta} \rd u + \frac{1}{(\log T)^{96}}.
    	\end{equation}
	
    	Consider
    	\begin{equation}
    		I(\sigma) =\int_{\mathbb{R}} |D(\sigma + \ii \tau + \ii u)|^\beta \cdot |\Phi_{\Delta, L}(\sigma + \ii u)|^\beta \rd u,
    	\end{equation}
    	with $\Delta = \log^{\e}T$ and $L = 1.5 \log^\theta T$.
        Then, by Lemma~\ref{le:constr}\hspace{0.5mm}(i) and (iv), Corollary~\ref{cor:gabriel} can be applied and yields
    	\begin{equation}\label{eq:prop: LB.eq.1}
            \begin{aligned}
        		&\int_{\mathbb{R}} |D(\sigma + \ii \tau + \ii u)|^{\beta} \cdot |\Phi_{\Delta, L}(\sigma + \ii u)|^{\beta} \rd u \\
                &\hspace{10mm} \ll \int_{\mathbb{R}} |D(\tfrac 12 + \ii \tau + \ii u)|^{\beta} \cdot |\Phi_{\Delta, L}(\tfrac 12 + \ii u)|^{\beta} \rd u.
            \end{aligned}
    	\end{equation}
    	Now, it remains to un-smooth both sides of this expression.
        Lemma~\ref{le:constr}\hspace{0.5mm}(ii) (with $b_1=1/3$) implies that $\Phi_{\Delta, L}(\sigma + \ii u)\gg1$ for $|u|\leq \log^\theta T$. We thus have
    	\begin{equation}\label{eq:prop: LB.eq.2}
            \int_{-\log^\theta T}^{\log^\theta T} |D(\sigma + \ii \tau + \ii u)|^{\beta} \rd u \ll \int_{\mathbb{R}} |D(\sigma + \ii \tau + \ii u)|^{\beta} \cdot |\Phi_{\Delta, L}(\sigma + \ii u)|^{\beta} \rd u,
    	\end{equation}
        settling the left-hand side of \eqref{eqn: to prove off axis}.
        For the right-hand side, note that the choice $\Delta = \log^{\e}T$ and $L = 1.5 \log^\theta T$ ensures that the error term $(\sigma - \tfrac 12) \tfrac{\Delta^2}{L}$ in Lemma~\ref{le:constr} is $(\log T)^{-\e}$ for $\sigma - \tfrac 12 \leq (\log T)^{\theta - 3 \e}$.
    	Lemma~\ref{le:constr}\hspace{0.5mm}(iii) (with $b_2 = 1$) shows that the right-hand side of \eqref{eq:prop: LB.eq.1} is
		\begin{equation}\label{eq:prop: LB.eq.3}
            \begin{aligned}
                &\int_{\mathbb{R}} |D(\tfrac 12 + \ii \tau + \ii u)|^{\beta} \cdot |\Phi_{\Delta, L}(\tfrac 12 + \ii u)|^{\beta} \rd u \\[1mm]
                &\quad\ll \int_{-3\log^\theta T}^{3\log^\theta T} |D (\tfrac 12 + \ii \tau + \ii u)|^{\beta} \rd u + \sum_{\ell=0}^{\infty} \int_{\mathcal{U}_\ell} |D(\tfrac 12 + \ii \tau + \ii u)|^{\beta} \cdot |\Phi_{\Delta, L}(\tfrac 12 + \ii u)|^{\beta} \rd u,
            \end{aligned}
		\end{equation}
        where $\mathcal{U}_\ell =\{3(\log T)^{\theta + \ell} \leq |u| \leq 3 (\log T)^{\theta + \ell + 1}\}$.
        By Corollary~\ref{cor:maxbound} and a union bound, the event
		\begin{equation}
            \mathcal{S}(T) = \bigcap_{\ell=0}^{\infty} \Big\{\max_{|u| \leq \log^{\ell} T} |D(\tfrac 12 + \ii \tau + \ii u)| \leq 2^{\ell} (\log T)^{2 + \ell}\Big\}
		\end{equation}
        has probability $1 - \oo(1)$.
        Moreover, by Lemma~\ref{le:constr} (i) with $A = 1 + \frac{100}{\e} (\lceil \theta \rceil + 1) (1 + 1/\beta)$ and $b_2 = 2 (\log T)^\ell - 1$, we have, for all $3 (\log T)^{\theta + \ell} \leq |u|$,
		\begin{equation}
            |\Phi_{\Delta, L}(\tfrac 12 + \ii u)| \ll (\log T)^{- 100 \ell (1 + 1/\beta)} \cdot (\log T)^{- 100 (\lceil \theta \rceil + 1) \cdot (1 + 1 / \beta)}.
		\end{equation}
		Therefore, on the event $\mathcal{S}(T)$, and for every integer $\ell\geq 0$, the following holds
        \begin{equation}
            \begin{aligned}
                &\int_{\mathcal{U}_\ell} |D(\tfrac 12 + \ii \tau + \ii u )|^{\beta} \cdot |\Phi_{\Delta, L}(\tfrac 12 + \ii u)|^{\beta} \rd u \\[0.5mm]
                &\hspace{10mm}\ll (\log T)^{\lceil \theta \rceil + \ell + 1} \cdot 2^{\beta \ell} (\log T)^{\beta (2 + \ell)} \cdot (\log T)^{-100 (\beta + 1) (\lceil \theta \rceil + \ell + 1)} \\[1mm]
                &\hspace{10mm}\ll (\log T)^{-96 (\beta + 1)\cdot (\lceil \theta \rceil + \ell + 1)}.
            \end{aligned}
		\end{equation}
		Thus, on  $\mathcal{S}(T)$, the contribution of the sum on the right-hand side of \eqref{eq:prop: LB.eq.3} is negligible.
        The claim follows by combining Equations \eqref{eq:prop: LB.eq.1}, \eqref{eq:prop: LB.eq.2} and \eqref{eq:prop: LB.eq.3}.
    \end{proof}

    \subsection{Mollification}\label{sect: mollification}

    This step is an adaptation of Section 4.2 of \cite{ABBRS_2019}, which is itself based on the work of \cite{RadSou15}.
    The treatment is slightly different as the width of the interval needs to be taken into account.
    Also, we choose to use the discretization in Proposition~\ref{prop:union} to obtain a uniform control on the interval as opposed to a Sobolev inequality.

    The main idea is to define a mollifier for the zeta function
    \begin{equation}\label{eqn: M}
        M(s) = \sum_{\substack{\Omega(n) \leq \nu_{\theta} \\ p|n \implies p \leq X}} \frac{\mu(n)}{n^s},
    \end{equation}
    where
    \begin{equation}\label{def:nu}
        \begin{aligned}
            &X = \exp((\log T)^{1-K^{-1}}), ~~ \text{for } K\geq 2, \quad \text{and } \nu_{\theta} = 100 K e^{\theta \vee 0}\log\log T.
        \end{aligned}
    \end{equation}
    Here $\mu$ denotes the M\"obius function $\mu(n)=(-1)^{\omega(n)}$ if $n$ is square-free, where $\omega(n)$ is the number of distinct prime factors, and
    $\mu(n)=0$ if $n$ is non-square-free.
    The estimate will be done slightly off-axis:
    \begin{equation}\label{eqn: sigma_0}
        \sigma_0 = \frac{1}{2} + \frac{(\log T)^{3/(2K)}}{\log T}.
    \end{equation}
    The parameter $K$ will eventually be assumed to be large enough depending on $\theta$, $\beta$ and $\e$.

    The goal of this section is to prove that $M$ is an approximate inverse of $\zeta$:
    \begin{lemma}\label{lem:mollification}
        Let $\theta > -1$ and $\e > 0$ be given. Then,
        \begin{equation}\label{eq:lem:mollification}
            \P\bigg(\max_{|h|\leq \log^\theta T}\left|(\zeta\cdot M)(\sigma_0 + \ii \tau + \ii h) - 1\right| > \e\bigg) = \oo(1).
        \end{equation}
    \end{lemma}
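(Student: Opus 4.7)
The plan is to realize $M(s)$ as a truncated partial Dirichlet series for the formal Euler product $\prod_{p\leq X}(1-p^{-s})=\zeta(s)^{-1}\prod_{p>X}(1-p^{-s})^{-1}$, so that heuristically $\zeta(s)M(s)\approx 1$ when $\Re s$ is close to $1/2$ and $X=\exp((\log T)^{1-1/K})$ is very large. To make this precise I would first replace $\zeta$ by the Dirichlet polynomial $D_\zeta(s)=\sum_{n\leq T}(1-n/T)^A n^{-s}$ via \eqref{eqn: approx funct eqn}, then expand $D_\zeta(s)M(s)=\sum_n c_n n^{-s}$ by Dirichlet convolution. The key combinatorial observation is that $c_1=1$, while $c_n=0$ for every $n\geq 2$ (with $n\leq T$) whose prime factors all lie below $X$ and with $\omega(n)\leq\nu_\theta$, by the classical cancellation $\sum_{d\mid n,\,\mu(d)\neq 0}\mu(d)=0$ for $n\geq 2$ (the restrictions built into $a$ being automatic for such divisors $d$). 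Consequently $c_n$ can be nonzero only when $n$ contains a prime factor exceeding $X$ or has $\omega(n)>\nu_\theta$.

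Next, since $D_\zeta M$ is a Dirichlet polynomial of length at most $T\cdot X^{\nu_\theta}=T^{1+\oo(1)}$, the mean-value formula (Lemma \ref{lem:ABBRS.2018.Lemma.3.3}) reduces the pointwise second moment to
\begin{equation*}
\E\big[|D_\zeta(\sigma_0+\ii\tau)M(\sigma_0+\ii\tau)-1|^2\big]\ll\sum_{n\geq 2}\frac{|c_n|^2}{n^{2\sigma_0}}.
\end{equation*}
Splitting this diagonal sum by the type of violation, the contribution from $n$ containing a prime $p>X$ factors into an Euler product whose local factors at primes $>X$, evaluated at abscissa $2\sigma_0=1+2(\log T)^{3/(2K)-1}$, produce a saving of order $(\log T)^{-3/(2K)}$ via the prime number theorem (Lemma \ref{lem:PNT.estimates}). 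The contribution from $\omega(n)>\nu_\theta$ is handled by inserting the Chernoff weight $e^{\omega(n)-\nu_\theta}$ exactly as in Lemma \ref{le:2nd}: the resulting Euler product over $p\leq X$ only costs $(\log T)^{\OO(1)}$, whereas the factor $e^{-\nu_\theta}$ is super-polynomially small in $\log T$ by the calibration $\nu_\theta=100Ke^{\theta\vee 0}\log\log T$ in \eqref{def:nu}. Combined, these yield $\E[|D_\zeta M-1|^2]\ll(\log T)^{-c(K)}$ with $c(K)\to\infty$ as $K\to\infty$.

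Finally, to upgrade this pointwise estimate to the uniform statement required by \eqref{eq:lem:mollification}, I would apply the discretization Proposition \ref{prop:union} with $\beta=2$ to the Dirichlet polynomial $D_\zeta M-1$, bounding $\max_{|h|\leq\log^\theta T}|D_\zeta M-1|^2$ by a sum over $\OO(\log^{1+\theta}T)$ shifted evaluations plus a rapidly decaying tail. Taking expectations and applying Chebyshev's inequality then gives
\begin{equation*}
\P\Big(\max_{|h|\leq\log^\theta T}|\zeta M-1|>\e\Big)\ll\e^{-2}(\log T)^{1+\theta-c(K)},
\end{equation*}
which is $\oo(1)$ once $K$ is chosen large enough in terms of $\theta$ and $\e$. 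The main technical obstacle is checking that the Chernoff saving from the $\omega(n)>\nu_\theta$ regime indeed beats the $(\log T)^{1+\theta}$ loss from the discretization; this is precisely what motivates the explicit scaling $\nu_\theta\asymp K\log\log T$ with the factor $e^{\theta\vee 0}$ built into \eqref{def:nu}, and why the off-axis shift $\sigma_0-\tfrac12=(\log T)^{3/(2K)-1}$ must be chosen to win an arbitrarily large power of $\log T$ in the type-(i) Euler product.
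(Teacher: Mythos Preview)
Your approach is essentially the paper's: approximate $\zeta$ by a length-$T$ Dirichlet polynomial, note that $M$ has length $T^{\oo(1)}$, apply Proposition \ref{prop:union} with $\beta=2$ and Chebyshev to reduce to the pointwise bound $\E[|(\zeta M)(\sigma_0+\ii\tau)-1|^2]\ll(\log T)^{-c}$, which is exactly Lemma \ref{lem4.2} (itself deferred to \cite{ABBRS_2019}). Your convolution-and-diagonal sketch of that $L^2$ bound is also the mechanism used there.

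Two small corrections to the sketch. First, with the \emph{smoothed} approximation \eqref{eqn: approx funct eqn} the coefficients are $c_n=\sum_{d\mid n}\mu(d)a(d)\,(1-(n/d)/T)^A$, and the weight $(1-(n/d)/T)^A$ spoils the exact M\"obius cancellation you invoke; you need either the unsmoothed $\sum_{n\leq T}n^{-s}$ (available with error $\OO(T^{-\sigma_0})$ for $|t|\asymp T$) or to control the residual directly. Second, the saving from primes $p>X$ at abscissa $2\sigma_0$ is not $(\log T)^{-3/(2K)}$ but rather $\sum_{p>X}p^{-2\sigma_0}\ll X^{-(\sigma_0-1/2)}=\exp(-(\log T)^{1/(2K)})$, which already beats any power of $\log T$ for every fixed $K\geq2$; so your final sentence is only needed for the $\Omega(n)>\nu_\theta$ contribution, where the calibration \eqref{def:nu} indeed does the job.
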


    This was proved in the case $\theta = 0$ in Lemma 4.2 of \cite{ABBRS_2019}.
    In particular, it also holds {\it verbatim} for $-1 < \theta < 0$ since the interval is just smaller.
    The proof of Lemma~\ref{lem:mollification} also holds in the case $\theta > 0$ with slight modifications that we highlight.
    The key idea is the following $L^2$-control:

    \begin{lemma}\label{lem4.2}
        Let $\theta > 0$ be given.
        Then,
        \begin{equation}
            \E\Big[\big| (\zeta\cdot M)(\sigma_0 + \ii \tau) -1\big|^2\Big] \ll (\log T)^{-100 \, e^\theta}.
        \end{equation}
    \end{lemma}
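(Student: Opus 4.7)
The plan is to realize $\zeta \cdot M - 1$ as a short Dirichlet polynomial, apply a mean value theorem, and then estimate the resulting coefficient sum by combining the arithmetic vanishing of $\sum_{n\mid k}\mu(n)$ with Rankin's trick on the multiplicative constraints defining $\mathfrak{a}$.

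First I would approximate $\zeta(\sigma_0 + \ii \tau)$ by the sharp-cutoff Dirichlet polynomial $D(s) = \sum_{n \leq T} n^{-s}$. Using Euler--Maclaurin (or the smooth version in \cite{MR1360548}), one has $\zeta(s) - D(s) = -T^{1-s}/(s-1) + \OO(T^{-1/2})$ for $\tau \in [T, 2T]$ and $\sigma_0 = 1/2 + (\log T)^{3/(2K)}/\log T$. Multiplying by $M$ and using the trivial second-moment bound $\E[|M(\sigma_0+\ii\tau)|^2] \ll (\log T)^{O(1)}$ (which follows from the mean value theorem and an Euler product calculation), the contribution of $\zeta - D$ to $\E[|\zeta M - 1|^2]$ is of size $T^{-1+o(1)}$, far below the target.

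Next I would expand $D(s) M(s) - 1 = \sum_{k \geq 2} c_k k^{-s}$ where
\begin{equation*}
c_k = \sum_{\substack{n \mid k \\ k/n \leq T}} \mu(n)\, a(n), \qquad a(n) = \mathbf{1}\{n \text{ is $X$-smooth and } \Omega(n) \leq \nu_\theta\}.
\end{equation*}
The key observation is that if $k \leq T$ and $k$ is \emph{nice}, meaning all prime factors of $k$ are $\leq X$ and $\omega(k) \leq \nu_\theta$, then every square-free divisor $n \mid k$ satisfies $a(n) = 1$, so $c_k = \sum_{n \mid k,\, \mathrm{sq\text{-}free}} \mu(n) = 0$ for $k > 1$. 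For all other $k$ we have the trivial bound $|c_k| \leq 2^{\omega(k)}$. Since $DM$ has length at most $T \cdot X^{\nu_\theta} = T^{1+\oo(1)}$, the mean value theorem (Lemma \ref{lem:ABBRS.2018.Lemma.3.3}) gives $\E[|DM - 1|^2] \ll \sum_{k \geq 2} |c_k|^2 k^{-2\sigma_0}$.

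I would split the coefficient sum into $S_A$ (over $2 \leq k \leq T$, where only non-nice $k$ contribute) and $S_B$ (over $T < k \leq T X^{\nu_\theta}$). The piece of $S_A$ coming from $k$ having a prime factor $> X$ factors through $\sum_{p > X} p^{-2\sigma_0} \ll X^{1-2\sigma_0}/(2\sigma_0 - 1) = \exp(-2(\log T)^{1/(2K)})/\delta$ by the choice of $\sigma_0$, so is super-polylogarithmically small after combination with a standard Euler product $\ll (\log T)^{O(1)}$ over the complementary $k_1$-part. The piece from $k$ smooth with $\omega(k) > \nu_\theta$ is treated by Rankin's trick: for $\lambda > 0$,
\begin{equation*}
\sum_{\omega(k) > \nu_\theta} \frac{4^{\omega(k)}}{k^{2\sigma_0}} \leq e^{-\lambda \nu_\theta} \prod_p \Big(1 + \frac{4 e^\lambda}{p^{2\sigma_0} - 1}\Big) \ll (\log T)^{4 e^\lambda - 100 K e^{\theta \vee 0} \lambda},
\end{equation*}
so taking $\lambda = 1$ and $K = K(\theta)$ large enough yields an exponent $\leq -100 e^{\theta}$. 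Finally, $S_B$ is estimated using $|c_k| \leq d(k)$ and $\sum_{k \leq N} d(k)^2 \ll N(\log N)^3$, giving $S_B \ll (\log T)^3 T^{-2\delta}/\delta$, which again decays faster than any power of $\log T$.

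The main obstacle is calibrating the parameters $X$, $\sigma_0$, and $\nu_\theta$ so that three competing constraints are met simultaneously: $DM$ must have length $\leq T^{1+o(1)}$ for the mean value theorem, $T^{-2\delta}$ and $X^{1-2\sigma_0}$ must be super-polylogarithmically small (controlling $S_B$ and the large-prime piece of $S_A$), and Rankin's trick on $\omega(k) > \nu_\theta$ must beat the $(\log T)^{4e^\lambda}$ Euler product by a factor $(\log T)^{100 e^\theta}$. All three are achieved by the given choices once $K$ is taken large, and the rest of the argument is routine bookkeeping.
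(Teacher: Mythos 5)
Your route is genuinely different from the paper's: the paper imports Lemma 4.2 of \cite{ABBRS_2019} (which rests on a first--moment computation for $\E[\zeta M]$ and on the twisted second moment of $\zeta$ from \cite{BettinChandeeRadziwill}) and only tracks the two new error terms created by the truncation $\Omega(n)\leq \nu_\theta$ and by the tail $\sum_{p>X}p^{-2\sigma_0}$. Your plan instead multiplies out $D\cdot M$ into a single Dirichlet polynomial and applies a plain mean value theorem. The identification of the vanishing coefficients $c_k$ for nice $k\leq T$, the Rankin step, and the large--prime split are all sound and do produce the exponent $-100e^{\theta}$.

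The genuine gap is at the mean value step. The polynomial $DM$ has length $N=TX^{\nu_\theta}$, and $X^{\nu_\theta}=\exp\big(100Ke^{\theta\vee 0}(\log T)^{1-1/K}\log\log T\big)$ is super-polylogarithmically large, so Lemma \ref{lem:ABBRS.2018.Lemma.3.3} (which requires length $\leq T$) does not apply, and the unconditional substitute, Montgomery--Vaughan, only gives
\begin{equation*}
\E\big[|DM-1|^2\big]\ \ll\ \sum_{k\geq 2}|c_k|^2 k^{-2\sigma_0}\Big(1+\frac{k}{T}\Big).
\end{equation*}
For $T<k\leq N$ the off-diagonal factor $k/T$ can be as large as $X^{\nu_\theta}$, while the only available decay is $k^{-2(\sigma_0-1/2)}\leq T^{-2(\sigma_0-1/2)}X^{-2(\sigma_0-1/2)\nu_\theta}=\exp\big(-2(\log T)^{3/(2K)}-\OO((\log T)^{1/(2K)}\log\log T)\big)$. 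Since $1-1/K>3/(2K)$ as soon as $K\geq 3$, the product $X^{\nu_\theta}\cdot T^{-2(\sigma_0-1/2)}$ diverges faster than any power of $\log T$ for every $K\geq 3$; your bound $S_B\ll (\log T)^3T^{-2\delta}/\delta$ controls only the diagonal, not this off-diagonal error, and the coefficients with $T<k\leq N$ are genuinely plentiful ($|c_k|\geq 1$ on roughly $T\cdot X^{\nu_\theta}$ integers), so nothing forces cancellation. The restriction to small $K$ is not removable: the lower-bound argument of Section 3.4 takes $K$ large in terms of $\theta$, $\beta$ and $\e$. To repair this you must avoid forming the long product polynomial altogether, e.g.\ by computing $\E[\zeta M]$ and $\E[|\zeta M|^2]$ directly through the twisted first and second moments of $\zeta$ as in \cite{RadSou15} and \cite{ABBRS_2019} --- which is precisely what the paper's proof does.
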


    \begin{proof}
         The proof follows \cite{ABBRS_2019} with a new error term due to the choice of $\nu_{\theta}$.
        (The manipulations are very similar to the ones in Lemma~\ref{le:2nd}.)
        The error appears after Equation (4.10) in \cite{ABBRS_2019} and is given by
        \begin{equation}
            (\log T) \, e^{-\nu_{\theta}}\prod_{p\leq X}(1+7p^{-1}).
        \end{equation}
        The Euler product is $\ll (\log T)^7$ using Lemma~\ref{lem:PNT.estimates}. Using this and the definition of $\nu_{\theta}$ in \eqref{def:nu} yields
        \begin{equation}
            (\log T) \, e^{-\nu_{\theta}}\prod_{p\leq X}(1+7p^{-1})\ll (\log T)^8 \cdot (\log T)^{-100 K e^{\theta}}.
        \end{equation}
        Since $K \geq 2$, this gives the correct estimate.
        Note that the expression $\sum_{p > X}\log (1 - p^{-2\sigma_0})^{-1}$
        entering in the remainder of the proof of Lemma 4.2 in \cite{ABBRS_2019} is
        \begin{equation}
            \ll \sum_{p>X}p^{-2\sigma_0}\ll X^{-(\sigma_0 -1/2)}=\exp(-(\log T)^{\frac{1}{2K}}) \ll (\log T)^{-100 \, e^\theta}.
        \end{equation}
        This ends the proof.
    \end{proof}

    \begin{proof}[Proof of Lemma~\ref{lem:mollification} for $\theta > 0$]
        By Lemma~\ref{zetapprox}, $\zeta$ is well approximated by a Dirichlet polynomial of length $T^{1 + \e}$ for any given $\e > 0$.
        Moreover, $M$ is a Dirichlet polynomial of length less than $T^\e$ for any given $\e > 0$. Therefore, an application of Markov's inequality and Proposition~\ref{prop:union} yield that the probability in \eqref{eq:lem:mollification} is
        \begin{equation}
            \ll \log^{1+\theta} T \cdot  \E\Big[\big| (\zeta\cdot M)(\sigma_0 + \ii \tau) -1\big|^2\Big].
        \end{equation}
        The conclusion follows from Lemma~\ref{lem4.2}.
    \end{proof}

    \subsection{Approximation of the mollifier}\label{sect: bounds D}

    We now approximate the mollifier $M$ by the exponential of a Dirichlet polynomial.
    If we let
    \begin{equation}
        \widetilde{\mathcal{P}}_{1 - K^{-1}}(s) = \sum_{k\geq 1} \sum_{p\leq X} \frac{1}{k p^{k s}},
    \end{equation}
    then the following relation between $\exp(-\widetilde{\mathcal{P}}_{1 - K^{-1}}(s))$ and $M(s)$ holds for all $\Re s \geq 1/2$:
    \begin{equation}
        \exp(-\widetilde{\mathcal{P}}_{1 - K^{-1}}(s)) = \exp\bigg(\log \prod_{p\leq X}(1-p^{-s})\bigg) = M(s) + \sum_{\substack{\Omega(n) > \nu_{\theta} \\ p|n \implies p \leq X}} \frac{\mu(n)}{n^s}.
    \end{equation}
    In particular, we see that $\exp(-\widetilde{\mathcal{P}}_{1 - K^{-1}}(s))$ and $M(s)$ only differ for integers $n$ with more than $\nu_{\theta}$ prime factors ($\Omega(n) > \nu_{\theta}$) and all their prime factors $\leq X$. The following lemma make use of this fact to estimate how close they are when $s = \sigma_0 + \ii \tau + \ii h$.
    \begin{lemma}\label{lem:Lemma.4.5.ABBRS.2018.analog}
        Let $\theta > -1$ be given. Then, for any $K\geq 2$, we have
        \begin{equation}\label{eq:lem:Lemma.4.5.ABBRS.2018.analog}
            \P\bigg(\max_{|h| \leq \log^\theta T} \Big|(M - \exp(-\widetilde{\mathcal{P}}_{1 - K^{-1}}))(\sigma_0 + \ii \tau + \ii h)\Big| > (\log T)^{-10}\bigg) = \oo(1).
        \end{equation}
    \end{lemma}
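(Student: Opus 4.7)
The plan is to combine a Taylor truncation of the exponential with a second moment estimate for the residual. I first condition on the high-probability event
$\mathcal{E}(T) \leqdef \big\{\max_{|h| \leq \log^\theta T} |\widetilde{\mathcal{P}}_{1-K^{-1}}(\sigma_0 + \ii \tau + \ii h)| \leq \nu_\theta / 10\big\}$,
whose probability is $1 - \oo(1)$: the prime component $\mathcal{P}_{1-K^{-1}}$ is controlled uniformly by $\OO(\log\log T)$ via \eqref{eqn: max DP}, while the higher prime-power contribution $\sum_{p^k \leq X, k \geq 2} (k p^{k s})^{-1}$ is deterministically $\OO(\log\log T)$ on $\Re s \geq 1/2$ by the prime number theorem estimates of Lemma \ref{lem:PNT.estimates}. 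On $\mathcal{E}(T)$, I apply \eqref{eqn: truncation} with $\lambda = -1$, $\mathcal{P} = \widetilde{\mathcal{P}}_{1 - K^{-1}}$ and $\nu = \nu_\theta$ to obtain $\exp(-\widetilde{\mathcal{P}}_{1-K^{-1}}(s)) = T_\nu(s) + \OO(e^{-\nu_\theta})$, where $T_\nu(s) \leqdef \sum_{j=0}^{\nu_\theta} (-\widetilde{\mathcal{P}}_{1-K^{-1}}(s))^j / j!$. Since $\nu_\theta = 100 K e^{\theta \vee 0} \log\log T$, the pointwise error is $(\log T)^{-100 K e^{\theta \vee 0}}$, negligible compared to $(\log T)^{-10}$ for $K$ large.

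The next step is to identify $T_\nu$ with $M$ up to a small tail. Writing $T_\nu(s) = \sum_n c(n)/n^s$ by multiplicativity, a local analysis at each prime $p$ rests on the identity $\exp(-\sum_{k=1}^{K_p} p^{-ks}/k) = (1 - p^{-s}) \exp(\sum_{k > K_p} p^{-ks}/k)$, where $K_p \leqdef \lfloor \log X / \log p \rfloor$. This yields $c(n) = \mu(n)$ for squarefree $n$ satisfying $\omega(n) \leq \nu_\theta$ and $p \mid n \Rightarrow p \leq X$, which matches the support and coefficients of $M$; moreover $c(n) = 0$ on non-squarefree integers all of whose prime-power factors are $\leq X$. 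The residual $R(s) \leqdef T_\nu(s) - M(s)$ is therefore a Dirichlet polynomial concentrated on non-squarefree integers containing a prime-power factor exceeding $X$, forcing $n > X$ throughout the support of $R$.

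To conclude I would show $\max_{|h| \leq \log^\theta T} |R(\sigma_0 + \ii \tau + \ii h)| \ll (\log T)^{-20}$ with probability $1 - \oo(1)$, as combining this with the pointwise Taylor error of $\OO(e^{-\nu_\theta})$ gives the desired $(\log T)^{-10}$ bound in \eqref{eq:lem:Lemma.4.5.ABBRS.2018.analog}. Since $R$ has length at most $X^{\nu_\theta} = T^{\oo(1)}$, Chebyshev's inequality, the discretization in Proposition \ref{prop:union} and the mean-value formula (Lemma \ref{lem:ABBRS.2018.Lemma.3.3}) reduce the task to showing $\sum_n |c(n)|^2 / n^{2 \sigma_0} = \oo((\log T)^{-C})$ (restricted to the support of $R$) for any fixed $C$. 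By multiplicativity, the sum factors as $\prod_{p \leq X}(1 + \eta_p)$ with $\eta_p \ll p^{-2(K_p+1)\sigma_0}$ coming from a local Taylor expansion: for $p \leq \sqrt{X}$ one exploits $p^{(K_p+1)\sigma_0} \geq X^{\sigma_0}$, while for $\sqrt{X} < p \leq X$ the decay is controlled by $\sum p^{-4 \sigma_0} = X^{1 - 4\sigma_0 + \oo(1)} = \exp(-c (\log T)^{1/(2K)})$. The main technical subtlety lies in this Euler-product bookkeeping, which parallels the $L^2$-computations in Section 4 of \cite{ABBRS_2019}.
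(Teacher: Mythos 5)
Your overall strategy coincides with the paper's: control $\max_{|h|\leq\log^\theta T}|\widetilde{\mathcal{P}}_{1-K^{-1}}|$ on a high-probability event, Taylor-truncate the exponential via \eqref{eqn: truncation} with pointwise error $e^{-\nu_\theta}\ll(\log T)^{-100}$, and then kill the residual Dirichlet polynomial in $L^2$ after discretizing with Proposition \ref{prop:union} and applying the mean-value formula (Lemma \ref{lem:ABBRS.2018.Lemma.3.3}). You even treat explicitly the discrepancy coming from prime powers $p^{k}>X$ with $p\leq X$, a point the paper's own write-up elides, and your Euler-product estimate for that part of the residual is adequate.

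There is, however, one step that fails as written: the claim that $c(n)=0$ for every non-squarefree $n$ all of whose prime-power factors are $\leq X$, and hence that the support of $R=T_\nu-M$ forces $n>X$. The local identity $\exp(-\sum_{k\leq K_p}x^k/k)=(1-x)\exp(\sum_{k>K_p}x^k/k)$ describes the coefficients of the \emph{full} exponential; the coefficient $c(n)$ of the degree-$\nu_\theta$ Taylor polynomial $T_\nu$ agrees with it only when every decomposition of $n$ into prime powers $p^k\leq X$ uses at most $\nu_\theta$ factors, i.e.\ only when $\Omega(n)\leq\nu_\theta$. When $\Omega(n)>\nu_\theta$ the truncation discards some of the decompositions, the cancellation is incomplete, and $c(n)$ is generically nonzero even though every prime-power divisor of $n$ is $\leq X$; for instance $n=2^{\nu_\theta+1}\ll X$ lies in the support of $R$. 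Your Euler product $\prod_p(1+\eta_p)$ with $\eta_p\ll p^{-2(K_p+1)\sigma_0}$ does not see these terms at all. The missing ingredient is precisely the Rankin-type bound the paper applies to its truncation $\mathcal{M}$: after a crude $\OO(1)$ bound on the offending coefficients, the contribution of $\{n:\Omega(n)>\nu_\theta,\ p\mid n\Rightarrow p\leq X\}$ to the mean-value sum is $\ll e^{-\nu_\theta}\sum_{p\mid n\Rightarrow p\leq X}e^{\Omega(n)}n^{-1}\ll(\log T)^{-100}$ by the choice of $\nu_\theta$ in \eqref{def:nu}. Adding this step closes the gap; the rest of your argument stands.
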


    \begin{proof}
        The discretization in Proposition~\ref{prop:union} together with the mean value theorem in Lemma~\ref{lem:ABBRS.2018.Lemma.3.3} yield
        \begin{equation}
            \E\bigg[\max_{|h|\leq \log^\theta T} \big|M - \exp(-\widetilde{\mathcal{P}}_{1 - K^{-1}})\big|^2(\sigma_0+\ii \tau+\ii h)\bigg] \ll \log^{1+\theta} T \cdot \sum_{\substack{\Omega(n) > \nu_{\theta} \\ p|n \implies p \leq X}} \hspace{-2mm} n^{-1} .
        \end{equation}
        The right-hand side is $\ll (\log T)^{-100}$ by Rankin's trick and Lemma~\ref{lem:PNT.estimates}:
        \begin{equation}
            \begin{aligned}
                \log^{1+\theta} T \cdot \sum_{\substack{\Omega(n) > \nu_{\theta} \\ p|n \implies p \leq X}} \hspace{-2mm} n^{-1}
                &\ll \log^{1+\theta} T \cdot e^{-\nu_\theta} \hspace{-2mm} \sum_{p|n\implies p\leq X} \hspace{-2mm} e^{\Omega(n)}n^{-1} \\[-4mm]
                &\ll \log^{1+\theta} T \cdot e^{-\nu_\theta} \hspace{-2mm} \prod_{p|n\implies p\leq X} \bigg(1 + \sum_{k\geq 1} \frac{e^k}{p^k}\bigg) \ll (\log T)^{-100}.
            \end{aligned}
        \end{equation}
        The result follows by Markov's inequality.
    \end{proof}

    \newpage
    \subsection{Proofs of the lower bounds}\label{sect: lower proofs}

    Consider, for $0\leq j\leq K-2$, the Dirichlet polynomials
    \begin{equation}\label{eq:def:Dirichlet.polynomials.close.relatives}
       P_j(h) = \Re \sum_{p\in J_j} \frac{1}{p^{\sigma_0 + \ii \tau + \ii h}} , \qquad  J_j = (\exp((\log T)^{\frac{j}{K}}, \exp((\log T)^{\frac{j+1}{K}})].
    \end{equation}
    We choose a probabilistic notation for the increments $P_j$'s seen as random variable, omitting the dependence on the random $\tau$.
    We first prove a lower bound for the moments of Dirichlet polynomials.
    \begin{proposition}\label{prop:lower.bound.Dirichlet.polynomial.moments}
      	Let $\theta > -1$ and $\e > 0$ be given.
        Then,
        \begin{equation}\label{eq:prop:lower.bound.Dirichlet.polynomial.moments}
            \P\Big(\int_{-\log^{\theta} T}^{\log^{\theta} T} \exp\big(\beta \sum_{j=1}^{K-3} P_j(h) \big) \rd h > (\log T)^{f_{\theta}(\beta) - \e}\Big)=1-\oo(1).
        \end{equation}
    \end{proposition}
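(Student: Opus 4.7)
The plan is to treat $S(h)=\sum_{j=1}^{K-3}P_j(h)$ as an approximate branching random walk (BRW) in $h$. By Lemma \ref{lem: moments}, the $P_j$ are pairwise orthogonal across $j$ and each has variance $\tfrac{\log\log T}{2K}+\OO(1)$, and the walks $S(h),S(h')$ share every increment $P_j$ for which $p\in J_j$ satisfies $|h-h'|\log p<1$, i.e.\ roughly for $j\leq K\log\log|h-h'|^{-1}/\log\log T$. When $\theta\leq 0$, all $h\in[-\log^\theta T,\log^\theta T]$ share the increments with $j/K<|\theta|$ up to an $\OO(1)$ error (Corollary \ref{cor:DP.cutting.0.to.theta}), so the effective independent part has $K_\theta\asymp K(1+\theta)$ branching levels and total variance $v\log\log T/2$ with $v=1+\theta$. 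When $\theta>0$, the interval decomposes into $\asymp\log^\theta T$ sub-intervals of length one, each supporting an essentially independent copy of the full BRW, giving an effective REM of BRWs with $v=1$; in either case, the count of ``distinct'' values of $h$ at scale $(\log T)^{-1}$ is $(\log T)^{1+\theta}$ and $\beta_c(\theta)=2m(\theta)/v$.

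In the subcritical regime $\beta\leq\beta_c(\theta)$, the first step is a direct Paley-Zygmund argument applied to $Y=\int_{-\log^\theta T}^{\log^\theta T}\exp(\beta S(h))\,\rd h$. Using independence across $j$ with Lemmas \ref{lem:moment.estimates.Lemma.3.Sound.2009.generalisation} and \ref{lem:PNT.estimates}, one obtains $\E[Y]=(\log T)^{f_\theta(\beta)+\oo(1)}$. The second moment
\begin{equation*}
\E[Y^2]=\int\!\!\int\prod_{j=1}^{K-3}\E\bigl[\exp\bigl(\beta P_j(h)+\beta P_j(h')\bigr)\bigr]\,\rd h\,\rd h'
\end{equation*}
splits according to the branching scale $k^\star(h,h')$; using \eqref{eqn: correlation} at each scale yields a geometric-type sum that converges iff $\beta<\beta_c(\theta)$, giving $\E[Y^2]\ll\E[Y]^2$. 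Paley-Zygmund then delivers $Y\geq\tfrac{1}{2}\E[Y]$ with positive probability, which is upgraded to probability $1-\oo(1)$ either by repeating the argument on $\asymp\log^\theta T$ disjoint sub-intervals (when $\theta>0$) or by a continuity-in-$\beta$ argument (when $\theta\leq 0$, since the bound at $\beta=\beta_c^-$ degrades gracefully).

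For the supercritical regime $\beta>\beta_c(\theta)$, the integral is dominated by its near-maxima, so the natural tool is Kistler's multiscale second moment method from \cite{MR3380419}. For $\alpha\in(0,m(\theta)]$, define
\begin{equation*}
G_\alpha(h)=\Big\{\textstyle\sum_{j\leq k}P_j(h)\leq \alpha\tfrac{k}{K_\theta}\log\log T+C\ \forall\ 1\leq k\leq K_\theta;\ S(h)\geq\alpha\log\log T-C\Big\},
\end{equation*}
and $Z_\alpha=\int\bb{1}_{G_\alpha(h)}\,\rd h$. The linear barrier suppresses pairs $(h,h')$ with small split scale $k^\star$ that would otherwise dominate $\E[Z_\alpha^2]$, and a careful computation yields $\E[Z_\alpha^2]\ll\E[Z_\alpha]^2$ with $\E[Z_\alpha]=(\log T)^{\theta-\alpha^2/v+\oo(1)}$. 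Paley-Zygmund then gives $Z_\alpha\geq(\log T)^{\theta-\alpha^2/v-\e}$ with probability $1-\oo(1)$, whence
\begin{equation*}
Y\geq (\log T)^{\beta\alpha}Z_\alpha\geq (\log T)^{\beta\alpha+\theta-\alpha^2/v-2\e}.
\end{equation*}
The choice $\alpha=m(\theta)$ together with the identity $m(\theta)^2/v=1+\theta$ recovers the exponent $\beta m(\theta)-1=f_\theta(\beta)$.

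The main obstacle is the multiscale second-moment control of $Z_\alpha$: without the barrier restrictions, pairs with long common prefixes would blow up the ratio $\E[Z_\alpha^2]/\E[Z_\alpha]^2$. The Kistler barrier across all $K_\theta$ intermediate scales keeps this ratio bounded, but executing it in the Dirichlet-polynomial setting requires handling the (mild) non-Gaussianity of the $P_j$ via the moment estimates of Lemma \ref{lem:moment.estimates.Lemma.3.Sound.2009.generalisation} and managing edge cases where $\theta+(j+1)/K$ straddles zero. The $\theta>0$ case additionally requires combining the per-sub-interval BRW barrier with a REM-style union over the $\asymp\log^\theta T$ independent sub-intervals, so that the best of the independent maxima reaches the target height $m(\theta)\log\log T$ rather than just $\log\log T$.
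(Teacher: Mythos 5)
Your supercritical argument ($\beta>\beta_c$) is essentially the paper's: the paper also lower bounds the integral by $(\log T)^{\beta\gamma}\cdot\m\{h: S(h)>\gamma\log\log T\}$ and controls the measure of high points by Kistler's multiscale second moment method, the only cosmetic difference being that the paper imposes the multiscale restriction by requiring each increment $P_j(h)>x_j$ individually rather than a barrier on the partial sums. The gap is in your subcritical regime. The claim that $\E[Y^2]\ll\E[Y]^2$ for all $\beta<\beta_c(\theta)$, with $Y=\int e^{\beta S(h)}\,\rd h$, is false: the near-diagonal pairs $|h-h'|\asymp(\log T)^{-1}$ contribute $\asymp \m(I)(\log T)^{-1}e^{2\beta^2\sigma^2}$ to $\E[Y^2]$, where $\sigma^2=\tfrac{v}{2}\log\log T$ is the (effective) variance of $S(h)$, and comparing with $\E[Y]^2\asymp\m(I)^2e^{\beta^2\sigma^2}$ gives a ratio $(\log T)^{-(1+\theta)+\beta^2 v/2}$, which diverges as soon as $\beta\geq\sqrt{2(1+\theta)/v}=\beta_c(\theta)/\sqrt2$. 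This is the familiar fact that the $L^2$ phase of a multiplicative chaos is strictly smaller than its subcritical phase, and it is precisely why the paper never applies Paley--Zygmund to the partition function itself: for \emph{every} $\beta$ it applies Paley--Zygmund to the measure of $\gamma$-high points (with the multiscale restriction) at each level $\gamma_j=\tfrac{j}{M}m(\theta)+\e$, and then recovers the integral by the layer-cake bound $\int e^{\beta S}\geq\max_j(\log T)^{\beta\gamma_{j-1}}\m\{S>\gamma_{j-1}\log\log T\}$ together with the optimization of $\gamma\mapsto\beta\gamma+\mathcal E_\theta(\gamma)$, which automatically produces the quadratic exponent for $\beta\leq\beta_c$ and the frozen one for $\beta>\beta_c$. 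Your own supercritical machinery, run at the level $\alpha=\tfrac{\beta v}{2}\wedge m(\theta)$ instead of only at $\alpha=m(\theta)$, closes this gap; as written, the proof fails for $\beta\in[\beta_c/\sqrt2,\beta_c]$.

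Two smaller points. First, even where $\E[Y^2]\ll\E[Y]^2$ holds, Paley--Zygmund only yields a probability bounded below by a constant, not $1-\oo(1)$, since the off-diagonal integral $\iint e^{\beta^2\rho(h,h')}\,\rd h\,\rd h'$ exceeds $\m(I)^2$ by a constant factor; your ``continuity in $\beta$'' upgrade for $\theta\leq0$ is not a proof. The paper avoids this because for the restricted high-point count $\mathcal N$ the decoupled region $B$ gives exactly $(1+\oo(1))(\E[\mathcal N])^2$ and all other regions are $\oo((\E[\mathcal N])^2)$, so Paley--Zygmund at threshold $(\log T)^{-\eta/3}\E[\mathcal N]$ gives probability $1-\oo(1)$ directly. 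Second, for $\theta<0$ make sure $Y$ is defined with the increments $j<\mathcal J(\theta)$ removed (as in \eqref{eq:up.to.scale.theta.negligible.contribution}); otherwise $\E[Y]=(\log T)^{\beta^2/4+\theta+\oo(1)}$, which is strictly larger than $(\log T)^{f_\theta(\beta)}$, because the shared low-frequency increments inflate the mean of the partition function without inflating its typical value.
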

    The polynomial $P_{K-2}$ is not included in the sum to ensure that the variances of the $P_j$'s are almost equal.
    Indeed, for all $|h| \leq \log^{\theta} T$ and $j\leq K-3$, an application of \eqref{eqn: estimate gaussian moment} yields
    \begin{equation}\label{eqn: s_j}
        s_j^2 =\E[P_j(h)^2] = \frac{1}{2K} \log\log T + \OO((\log T)^{-\frac{1}{2K}}),
    \end{equation}
    since $\sigma_0 - \tfrac{1}{2} = (\log T)^{-1 + 3/(2K)}$. The polynomial $P_{0}$ is ignored to ensure that the polynomials $\sum_{j=1}^{K-3} P_j(h)$ are almost independent for $h$'s that are far apart, which will be crucial for the second-moment method to go through; see below \eqref{eq:second.moment.decomposition} in the proof of Proposition~\ref{prop:lower.bound.Dirichlet.polynomial.moments}.

    \begin{proof}[Proof of Proposition~\ref{prop:lower.bound.Dirichlet.polynomial.moments}]
        This is similar to the upper bound proof of Theorem~\ref{thm: freezing}.
        We first relate the moments to the measure of high points.
        Let $\e > 0$ and $M\in \N$, and set
        \begin{equation}
        	\mathcal{E}_{\theta}(\gamma) \leqdef
            \begin{cases}
        		\theta - \frac{\gamma^2}{1 + \theta}, & \text{ if } \theta \leq 0, \\[1mm]
        		\theta - \gamma^2, & \text{ if } \theta > 0.
        	\end{cases}	
        \end{equation} Consider $\gamma_j = \frac{j}{M} m(\theta) + \e$ for $1 \leq j \leq M$, and the good event
        \begin{equation}
            \begin{aligned}
                E
                &=\bigcap_{j=1}^M \bigg\{ \m\big\{|h| \leq \log^{\theta} T : \exp\big(\sum_{\ell=1}^{K-3} P_{\ell}(h)\big) > (\log T)^{\gamma_{j-1}}\big\} \geq (\log T)^{\mathcal{E}_{\theta}(\gamma_{j-1}) - \e/2}\bigg\} \\[-2mm]
                &\hspace{12mm}\bigcap \bigg\{ \max_{|h| \leq \log^{\theta} T} \, \exp\big(\sum_{\ell=1}^{K-3} P_{\ell}(h)\big) \leq (\log T)^{m(\theta) + \e}\bigg\}.
            \end{aligned}
        \end{equation}
        We will show below that $\P(E)$ is $1-\oo(1)$. Before, we prove the lower bound on the moments on the event $E$.
        We have
        \begin{equation}\label{eq:prop:lower.bound.Dirichlet.polynomial.moments.eq.integration.by.parts}
            \frac{\log \int_{-\log^{\theta} T}^{\log^{\theta} T} \exp\big(\beta \sum_{j=1}^{K-3} P_j(h)\big) \rd h}{\log \log T} \geq \max_{1 \leq j \leq M} \{\beta \gamma_{j-1} + \mathcal{E}_{\theta}(\gamma_{j-1})\} - \e/2.
        \end{equation}
        By the continuity of the function $\gamma \mapsto \beta \gamma + \mathcal{E}_{\theta}(\gamma)$, Equation~\eqref{eq:prop:lower.bound.Dirichlet.polynomial.moments.eq.integration.by.parts} implies that, on the event $E$ and for $M$ large enough with respect to $\e$ and $\beta$,
        \begin{equation}\label{eq:prop:lower.bound.Dirichlet.polynomial.moments.eq.optimization}
            \frac{\log \int_{-\log^{\theta} T}^{\log^{\theta} T} \exp\big(\beta \sum_{j=1}^{K-3} P_j(h)\big) \rd h}{\log \log T} > \max_{\gamma\in [\e,m(\theta)]} \big\{\beta \gamma + \mathcal{E}_{\theta}(\gamma)\big\} - \e.
        \end{equation}
        When $0 < \beta \leq 2 m(\theta) / (1 + (\theta \wedge 0))$, take $\e > 0$ small enough so that $\beta > 2 \e / (1 + (\theta \wedge 0))$.
        The maximum is attained at $\gamma = \tfrac{\beta}{2} (1 + (\theta \wedge 0))$, in which case the right-hand side of \eqref{eq:prop:lower.bound.Dirichlet.polynomial.moments.eq.optimization} is equal to $\tfrac{\beta^2}{4} (1 + (\theta \wedge 0)) + \theta - \e$.
        When $\beta >2 m(\theta) / (1 + (\theta \wedge 0))$, the maximum is attained at $\gamma = m(\theta)$, in which case the right-hand side of \eqref{eq:prop:lower.bound.Dirichlet.polynomial.moments.eq.optimization} is equal to $(\beta m(\theta) - 1) - \e$.
        Thus, on the event $E$ and for $M$ large enough, the lower bound in \eqref{eq:prop:lower.bound.Dirichlet.polynomial.moments} is satisfied.

        \vspace{2mm}
        To conclude the proof of the proposition, it remains to show that $\P(E)\to 1$ as $T\to \infty$.
        By the upper bound on the maximum of $\sum_{j=1}^{K-3} P_j(h)$ in \eqref{eqn: max DP} (and the remark below it for $\theta<0$), it is sufficient to prove that, for all $\eta > 0$ and all $0 < \gamma < m(\theta)$, the event
        \begin{equation}\label{eq:prop:lower.bound.Dirichlet.polynomial.moments.eq.step.1.end}
            \bigg\{\m\Big\{|h| \leq \log^{\theta} T : \sum_{j=1}^{K-3} P_j(h) > \gamma\log\log T\Big\} \geq (\log T)^{\mathcal{E}_{\theta}(\gamma) - \eta}\bigg\}
        \end{equation}
        has probability $1 - \oo(1)$.

        \vspace{2mm}
        Consider
        \begin{equation}
            \mathcal J(\theta) =
            \begin{cases}
                1, &\mbox{if } \theta\geq 0, \\
                \lfloor K |\theta| \rfloor + 1, &\mbox{if } -1<\theta < 0.
            \end{cases}
        \end{equation}
        For $\theta < 0$, Corollary~\ref{cor:DP.cutting.0.to.theta} ensures that the primes up to $\exp(\log^{|\theta|}T)$ only make a very small contribution, namely the event
        \begin{equation}\label{eq:up.to.scale.theta.negligible.contribution}
            \bigg\{\max_{|h|\leq \log ^\theta T}\Big|\sum_{j=1}^{\mathcal{J}(\theta) - 1} P_j(h)\Big| \leq \frac{\gamma}{(1 + \theta) \, K} \log \log T\bigg\}
        \end{equation}
        has probability $1 - \oo(1)$.
        We consider the random variable
        \begin{equation}
            \mathcal{N} = \m\Big\{|h| \leq \log^{\theta} T : P_j(h) > x_j, \text{ for } \mathcal J(\theta)\leq j\leq K-3\Big\},
        \end{equation}
        where
        \begin{equation}\label{eq:x.j.s}
            x_j = \Big(1 + \frac{100}{(1 + (\theta \wedge 0)) \, K}\Big) \cdot \frac{\gamma}{(1 + (\theta \wedge 0)) \, K} \log \log T.
        \end{equation}
        By summing the $x_j$'s, it is not hard to check that the intersection of the events $\{\mathcal{N}\geq (\log T)^{\mathcal{E}_{\theta}(\gamma) - \eta}\}$  and the one in \eqref{eq:up.to.scale.theta.negligible.contribution} is included in the event in \eqref{eq:prop:lower.bound.Dirichlet.polynomial.moments.eq.step.1.end}.
        Therefore, the proof of the proposition is reduced to show
        \begin{equation}\label{eq:to.prove.with.Paley.Zygmund}
            \P\big(\mathcal{N}\geq (\log T)^{\mathcal{E}_{\theta}(\gamma) - \eta}\big) = 1 - \oo(1) .
        \end{equation}
        This is established by the Paley-Zygmund inequality.

        \vspace{4mm}
        To this aim, we shall need one-point and two-point large deviation estimates for the event
        \begin{equation}\label{eqn: A}
            A (h) = \Big\{P_j( h) > x_j, ~\text{for } \mathcal J(\theta) \leq j \leq K-3\Big\},\  \theta>-1,\ h,h'\in[- \log^\theta T, \log^\theta T].
        \end{equation}
        The next two propositions are stated as Propositions 5.4 and 5.5 in \cite{ABBRS_2019}.
        They are consequences of the Gaussian moments in Lemma~\ref{lem: moments}.

        \begin{proposition}[One-point large deviation estimates]\label{prop: two point couple}
            Consider the event $A(h)$ in \eqref{eqn: A}.
            For any choices of $\sqrt{\log \log T} \ll_K x_j \leq \log\log T$ where $1 \leq j\leq K-3$, and uniformly for $h,h'\in [-\log^{\theta} T, \log^{\theta} T]$, we have
            \begin{equation}\label{eqn: prob one point}
                \P(A(h)) = (1 + \oo(1)) \prod_{j=\mathcal{J}(\theta)}^{K-3} \int_{x_j/s_j}^{\infty} \frac{e^{-y^2/2}}{\sqrt{2\pi}} \rd y
                \asymp \prod_{j = \mathcal J(\theta)}^{K-3} \frac{s_j}{x_j}\cdot e^{ -x_j^2/(2s_j^2)}.
            \end{equation}
        \end{proposition}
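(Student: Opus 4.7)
The plan is to prove \eqref{eqn: prob one point} by the method of moments combined with an Esscher-tilt (saddle-point) argument, exploiting the near-independence of the $P_j(h)$'s (as random variables in $\tau$) that arises from the disjointness of the prime ranges $J_j$. The heuristic is transparent: the $P_j(h)$'s have joint moments matching, up to negligible error, those of a centered Gaussian vector with independent components of variances $s_j^2$, so the tail event reduces to a product of independent one-dimensional Gaussian tail estimates in the moderate-deviation regime.

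First I would compute the joint moments. For any non-negative integers $(k_j)_{j = \mathcal{J}(\theta)}^{K-3}$ with $\sum_j k_j$ at most a small power of $\log T$, I would apply Lemma \ref{lem: moments} with the coefficient $a(p) = p^{-\sigma_0}(p^{-\ii h} + p^{\ii h})/2$ supported on $\bigcup_j J_j$. Since the $J_j$ are pairwise disjoint, the surviving ``diagonal'' monomials in the Dirichlet-polynomial expansion factor across $j$, and inserting the prime-number-theorem estimate (Lemma \ref{lem:PNT.estimates}) in each $J_j$ reproduces the variances $s_j^2$ of \eqref{eqn: s_j} and yields the Wick formula
\begin{equation*}
    \E\Big[\prod_{j = \mathcal{J}(\theta)}^{K-3} P_j(h)^{2k_j}\Big] = (1 + \oo(1)) \prod_{j = \mathcal{J}(\theta)}^{K-3} \frac{(2k_j)!}{2^{k_j} k_j!} \, s_j^{2k_j}.
\end{equation*}
Summing gives the joint Laplace transform
\begin{equation*}
    \E\Big[\exp\Big(\sum_j t_j P_j(h)\Big)\Big] = (1 + \oo(1)) \prod_j \exp(t_j^2 s_j^2 / 2),
\end{equation*}
uniformly for $|t_j|\, s_j = \OO(\sqrt{\log\log T})$.

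Second I would convert this Laplace-transform control into the tail via exponential tilting. Choose the saddle $t_j = x_j/s_j^2$; since $x_j \leq \log\log T$ and $s_j^2 \asymp \log\log T / (2K)$, one has $t_j s_j = \OO(\sqrt{\log\log T})$, safely inside the range of validity. Tilting the law of $(P_j(h))_j$ by $e^{\sum_j t_j P_j(h)}$ recenters each tilted $P_j(h)$ near $x_j$ with variance $s_j^2(1+\oo(1))$, and a one-dimensional Gaussian local limit (itself a consequence of the moment comparison in Step 1) gives
\begin{equation*}
    \P(A(h)) = (1 + \oo(1)) \prod_{j = \mathcal{J}(\theta)}^{K-3} \P(s_j Z_j > x_j),
\end{equation*}
with $(Z_j)$ iid standard Gaussians. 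The integral representation in \eqref{eqn: prob one point} is immediate, and the asymptotic $\asymp \prod_j (s_j/x_j) e^{-x_j^2/(2s_j^2)}$ follows from the Mills ratio $\int_u^\infty e^{-y^2/2}\, \rd y / \sqrt{2\pi} \asymp u^{-1} e^{-u^2/2}$ applied with $u = x_j/s_j \asymp \sqrt{\log\log T}$, which is bounded away from $0$.

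The main obstacle is ensuring that the moment matching in Step 1 is uniform up to total degree $\sum_j k_j$ comparable to $\log\log T$, which is what is needed for the saddle $t_j = x_j/s_j^2$ to lie inside the valid range for the Laplace transform. At this scale the Dirichlet polynomial $\prod_j P_j(h)^{2k_j}$ has length $T^{\oo(1)}$, which is precisely the window where the mean-value theorem for Dirichlet polynomials (Lemma \ref{lem:ABBRS.2018.Lemma.3.3}) applies. The cross-range contributions, where primes from distinct $J_j$ meet, must be shown to contribute only to the error; this is ensured by the multiplicative structure of $a(p)$ together with the cancellation in the $\tau$-average, exactly as in the proof of Lemma \ref{le:2nd}.
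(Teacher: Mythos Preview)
The paper does not supply its own proof of this proposition; it cites Proposition~5.4 of \cite{ABBRS_2019} and remarks only that it is a consequence of the Gaussian moment computation in Lemma~\ref{lem: moments}. Your sketch is therefore not competing against a written argument in the paper, but it is exactly the route the paper points to: establish that the joint moments of $(P_j(h))_{j}$ match those of independent Gaussians via Lemma~\ref{lem: moments} (the disjointness of the $J_j$ forces the diagonal in Lemma~\ref{lem:ABBRS.2018.Lemma.3.3} to factor across $j$, so there are in fact no ``cross-range'' terms to worry about), and then upgrade moment matching to the precise tail asymptotic by an exponential tilt in the moderate-deviation regime. Two small corrections: in the notation of Lemma~\ref{lem: moments} the coefficient is $a(p)=p^{-\sigma_0-\ii h}$, not its real part $p^{-\sigma_0}(p^{-\ii h}+p^{\ii h})/2$; and your appeal to the Mills ratio for the $\asymp$ needs $x_j/s_j$ bounded away from $0$, which the stated hypothesis $0<x_j\le\log\log T$ does not guarantee by itself---this is really a looseness in the proposition as stated, and in the only application (with the $x_j$ of \eqref{eq:x.j.s}) one has $x_j\asymp\log\log T$ so that $x_j/s_j\asymp\sqrt{\log\log T}\to\infty$.
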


        In the case of two points $h,h'$, the primes are essentially correlated up to $\exp(|h-h'|^{-1})$ and quickly decorrelate afterwards.
        For $\theta\geq 0$, this means that the $P_j$'s are essentially independent whenever $|h-h'| > (\log T)^{-\frac{1}{2K}}$, since $j=0$ is excluded.
        For $\theta < 0$, we must exclude the $j$'s up to $\mathcal J(\theta) - 1$. Therefore, the $P_j$'s are essentially independent whenever $|h-h'| > (\log T)^{\theta - \frac{1}{2K}}$.
        We get:

        \begin{proposition}[Two-point large deviation estimates]\label{prop: prob decouple}
            Consider the event $A(h)$ in \eqref{eqn: A}.
            For any choices of $0 < x_j \leq \log\log T$, and uniformly for $h,h'\in [-\log^{\theta} T, \log^{\theta} T]$ such that $|h-h'| > (\log T)^{\scriptscriptstyle -\frac{\mathcal{J}(\theta)}{K} + \frac{1}{2K}}$, we have
            \begin{equation}\label{eqn: prob two points}
                \P(A(h) \cap A (h')) = (1 + \oo(1)) \, \P(A(h))\, \P(A(h')).
            \end{equation}
            Furthermore, let $0 \leq \ell \leq K-3$.
            Then, uniformly for $h,h'\in [-\log^{\theta} T, \log^{\theta} T]$ such that $|h-h'| \leq (\log T)^{-\ell/K}$, we have
            \begin{equation}\label{eqn: two point mesoscopic}
                \P(A(h) \cap A (h'))
                \ll \exp\bigg(-\sum_{j = \mathcal J(\theta)}^\ell \frac{x_j^2}{2s_j^2} ~- \sum_{j = (\ell+1)\vee \mathcal {J(\theta)}}^{K-3} \frac{x_j^2}{s_j^2}\bigg).
            \end{equation}
        \end{proposition}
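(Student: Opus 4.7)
The plan is to establish both parts by extracting the dependence structure of the joint vector $(P_j(h), P_j(h'))_{\mathcal J(\theta) \leq j \leq K-3}$ and then invoking the Gaussian moment comparison of Lemma~\ref{lem: moments}. Since the $P_j$'s are supported on the disjoint prime ranges $J_j$ of \eqref{eq:def:Dirichlet.polynomials.close.relatives}, the joint $2k$-th moments of this vector, for $k$ up to roughly $\log\log T$, match those of a centered Gaussian vector whose covariance is block-diagonal in $j$, with each $2\times 2$ block equal to
\[
\Sigma_j = \begin{pmatrix} s_j^2 & r_j \\ r_j & s_j^2 \end{pmatrix}, \qquad r_j(h,h') = \tfrac{1}{2}\sum_{p \in J_j} \frac{\cos((h-h')\log p)}{p^{2\sigma_0}} + \OO(1),
\]
by \eqref{eqn: correlation} and \eqref{eqn: s_j}. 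The analysis then splits according to whether the oscillatory factor $|h-h'|\log p$ is small or large on each $J_j$.

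For the first claim, the hypothesis $|h-h'| > (\log T)^{-\mathcal J(\theta)/K + 1/(2K)}$ forces $|h-h'|\log p \geq (\log T)^{1/(2K)} \to \infty$ uniformly for every $p \in J_j$ with $j \geq \mathcal J(\theta)$. Partial summation against the prime number theorem density then yields $r_j = \OO(1) = \oo(s_j^2)$ for each such $j$, so $\Sigma$ is within $\oo(s_j^2)$ entrywise of the block-diagonal matrix corresponding to two independent copies. The Gaussian moment comparison upgrades this to the asymptotic factorization $\P(A(h)\cap A(h')) = (1+\oo(1))\,\P(A(h))\,\P(A(h'))$, with each one-point probability then evaluated by Proposition~\ref{prop: two point couple}.

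For the second claim I split the indices $\mathcal J(\theta) \leq j \leq K-3$ into the \emph{correlated} block $j \leq \ell$ and the \emph{decorrelated} block $j \geq \ell+1$. On the correlated block I simply use the one-point bound $\P(P_j(h) > x_j, P_j(h') > x_j) \leq \P(P_j(h) > x_j) \ll \exp(-x_j^2/(2s_j^2))$, where the Gaussian tail follows from Chebyshev's inequality applied to a $2k$-th moment with $k \asymp \log\log T$ via Lemma~\ref{lem: moments}. On the decorrelated block, the lower bound $|h-h'| > (\log T)^{-(\ell+1)/K}$ combined with $\log p \geq (\log T)^{j/K}$ gives $|h-h'|\log p \geq (\log T)^{(j-\ell-1)/K} \geq 1$, and partial summation produces $r_j = \OO(1)$, i.e.\ a relative correlation $r_j/s_j^2 = \OO(1/\log\log T)$. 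A joint high-moment estimate on $(P_j(h), P_j(h'))$ then yields $\P(P_j(h) > x_j, P_j(h') > x_j) \ll \exp(-x_j^2/s_j^2)$, the correction factor from the non-zero correlation being $\OO(1)$ because $x_j^2 \cdot r_j/s_j^4 = \OO(1)$ at the scale $x_j \leq \log\log T$. Since the blocks in $j$ decouple at the level of joint moments (disjoint prime supports), these bounds multiply across $j$ to give the claimed product.

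The main technical obstacle is keeping the Gaussian tail approximation valid simultaneously for all $j$ at thresholds $x_j$ of order $\log\log T$, where the factorial growth furnished by Lemma~\ref{lem: moments} only just suffices. I would handle this by tuning the moment order $2k$ with $k = \lfloor c \log\log T \rfloor$ for a suitably small $c$, so that Stirling's formula converts the moment bound into the required Gaussian tail via Chebyshev; the same choice of $k$ validates the joint factorization on the decorrelated block, where the relevant mixed moment of $(P_j(h), P_j(h'))$ decouples up to an error absorbed into the implicit constant.
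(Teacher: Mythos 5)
The paper does not actually prove this proposition; it imports it verbatim from Propositions 5.4 and 5.5 of \cite{ABBRS_2019}, remarking only that it follows from the Gaussian moment computation of Lemma \ref{lem: moments}. Your reconstruction follows the intended route (block-diagonal covariance across the disjoint prime ranges $J_j$, Gaussian moment matching, then a case split on the size of $|h-h'|\log p$), and your treatment of the second claim is sound: on the decorrelated block an $\OO(1)$ covariance costs only a multiplicative $e^{\OO(1)}$, which is harmless for an upper bound of the form \eqref{eqn: two point mesoscopic}.

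There is, however, a genuine gap in your proof of the first claim, i.e.\ the exact factorization \eqref{eqn: prob two points}. You assert that $r_j=\OO(1)=\oo(s_j^2)$ suffices, the Gaussian moment comparison ``upgrading'' this to the asymptotic factorization. It does not. For a bivariate Gaussian with variances $s_j^2$ and covariance $r_j$, the ratio $\P(X>x_j,\,Y>x_j)/(\P(X>x_j)\P(Y>x_j))$ is of size $\exp\big(x_j^2r_j/s_j^4(1+\oo(1))\big)$; with $x_j\asymp\log\log T$ and $s_j^2\asymp\frac{1}{2K}\log\log T$ one has $x_j^2/s_j^4\asymp K^2$, so a covariance that is merely $\OO(1)$ produces a multiplicative error $e^{\Theta(1)}$ per block and $e^{\Theta(K)}$ overall --- an $\asymp$ statement, not a $(1+\oo(1))$ statement, and not enough for the second-moment argument of Proposition \ref{prop:lower.bound.Dirichlet.polynomial.moments}, which needs $\E[\mathcal N^2]=(1+\oo(1))(\E[\mathcal N])^2$ exactly. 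Your general criterion ``$r_j=\oo(s_j^2)$ implies factorization'' is likewise false at this large-deviation scale (it would tolerate $r_j=\sqrt{\log\log T}$, giving an error $e^{\Theta(\sqrt{\log\log T})}$). What saves the statement --- and is precisely why the hypothesis carries the extra separation factor $(\log T)^{1/(2K)}$ beyond $(\log T)^{-\mathcal J(\theta)/K}$ --- is that for $j\geq\mathcal J(\theta)$ and $p\in J_j$ one has $|h-h'|\log p\geq(\log T)^{1/(2K)}$, and integration by parts against the prime number theorem then gives the quantitative decay $r_j\ll(\log T)^{-1/(2K)}$, not merely $r_j=\OO(1)$. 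With this polynomial decay the per-block correction is $\exp\big(\OO(K^2(\log T)^{-1/(2K)})\big)=1+\oo(1)$ and the factorization follows. Separately, converting the moment matching of Lemma \ref{lem: moments} into two-sided probability asymptotics (rather than Chebyshev upper bounds) requires approximating the indicator of $A(h)\cap A(h')$ by polynomials of controlled degree in the $P_j$'s; this is carried out in \cite{ABBRS_2019} and is only gestured at in your closing paragraph.
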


        Now, in order to prove \eqref{eq:to.prove.with.Paley.Zygmund}, we start by finding a lower bound on $\E[\mathcal{N}]$.
        By \eqref{eqn: prob one point}, the $x_j$'s in \eqref{eq:x.j.s} and the $s_j$'s in \eqref{eqn: s_j}, we have
        \begin{equation}\label{eqn: lb one point}
            \E[\mathcal{N}] = \int_{-\log^{\theta} T}^{\log^{\theta} T} \P(A(h)) \rd h \gg \log^{\theta} T \prod_{j = \mathcal J(\theta)}^{K-3} \frac{s_j}{x_j}\cdot e^{ -x_j^2/(2s_j^2)}
            \gg (\log T)^{\mathcal{E}_{\theta}(\gamma) - \eta/3},
        \end{equation}
        assuming that $K$ is large enough with respect to $\theta$, $\gamma$ and $\eta$.
        By the Paley-Zygmund inequality, this implies
        \begin{equation}
            \begin{aligned}
                \P\big(\mathcal{N}\geq (\log T)^{\mathcal{E}_{\theta}(\gamma) - \eta}\big)
                &\geq \P\big(\mathcal{N}\geq (\log T)^{-\eta/3}\E[\mathcal{N}]\big) \\
                &\geq \big(1 - (\log T)^{-\eta/3}\big) \frac{(\E[\mathcal{N}])^2}{\E[\mathcal{N}^2]}.
            \end{aligned}
        \end{equation}
        It remains to show $\E[\mathcal{N}^2] = (1 + \oo(1))(\E[\mathcal{N}])^2$.
        With $I = [-\log^{\theta} T, \log^{\theta} T]$, Fubini's theorem yields
        \begin{equation}\label{eq:second.moment.decomposition}
            \E[\mathcal{N}^2] = \int_{I \times I} \P(A(h) \cap  A(h')) \, \rd h \rd h'.
        \end{equation}
        The integral can be divided into $(K - \mathcal{J}(\theta) + 1)$ parts:
        \begin{equation}
            \begin{aligned}
                &B = \{(h,h'): |h-h'| > (\log T)^{-\frac{\mathcal{J}(\theta)}{K} + \frac{1}{2K}}\}; \\[-0.5mm]
                &B_0 = \{(h,h'): (\log T)^{-\frac{\mathcal{J}(\theta)}{K}} < |h-h'| \leq (\log T)^{-\frac{\mathcal{J}(\theta)}{K} + \frac{1}{2K}}\}; \\[0.5mm]
                &B_\ell = \{(h,h'): (\log T)^{-(\ell+1)/K} < |h-h'| \leq (\log T)^{-\ell/K}\}, \quad \text{for } \ell =\mathcal J(\theta),\dots, K-3; \\[0mm]
                &B_{K-2} = \{(h,h'): |h-h'| \leq (\log T)^{-(K-2)/K}\}.
            \end{aligned}
        \end{equation}
        The dominant term will be the one on $B$.
        Note that $\m(B) = \m(I)^2 (1 + \oo(1))$. Hence, by \eqref{eqn: prob two points}, we have
        \begin{equation}
            \int_B \P(A(h)\cap A(h')) \, \rd h\rd h' = (1 + \oo(1)) (\E[\mathcal{N}])^2.
        \end{equation}
        By \eqref{eqn: two point mesoscopic} and the estimate \eqref{eqn: lb one point}, the integral on $B_0$ is
        \begin{equation}
            \begin{aligned}
                &\ll (\log T)^{\theta - \frac{\mathcal{J}(\theta)}{K} + \frac{1}{2K}} \exp\bigg(\sum_{j =\mathcal J(\theta)}^{K-3} - \frac{x_j^2}{s_j^2}\bigg) \ll (\log T)^{-(\theta \vee 0) - \frac{1}{3K}} (\E[\mathcal{N}])^2,
            \end{aligned}
        \end{equation}
        assuming that $K$ is large enough with respect to $\theta$ and $\gamma$.
        For $\ell =\mathcal J(\theta), \dots, K-3$, the integral on $B_\ell$ is, by \eqref{eqn: two point mesoscopic} and the estimate \eqref{eqn: lb one point},
        \begin{equation}\label{eq:prop:lower.bound.Dirichlet.polynomial.moments.eq.step.3.decomposition.m.K}
            \begin{aligned}
                &\ll (\log T)^{\theta-\ell/K} \exp\bigg(-\sum_{j = \mathcal J(\theta)}^\ell \frac{x_j^2}{2s_j^2} \, - \sum_{j = \ell + 1}^{K-3} \frac{x_j^2}{s_j^2}\bigg) \\
                &= (\log T)^{-\theta - \ell/K} \exp\bigg(\sum_{j = \mathcal J(\theta)}^{\ell}\frac{x_j^2}{2s_j^2}  \bigg) \cdot (\log T)^{2\theta} \exp\bigg(-\sum_{j = \mathcal J(\theta)}^{K-3} \frac{x_j^2}{s_j^2} \bigg) \\
                &\ll (\log T)^{-\theta - \ell/K + (\ell/K + (\theta \wedge 0)) \frac{\gamma^2}{(1 + (\theta \wedge 0))^2} + \eta} \, (\E[\mathcal{N}])^2,
            \end{aligned}
        \end{equation}
        assuming again that $K$ is large enough with respect to $\theta$, $\gamma$ and $\eta$.
        Since $\gamma^2 < m(\theta)^2 = (1 + \theta)(1 + (\theta \wedge 0))$, the right-hand side of \eqref{eq:prop:lower.bound.Dirichlet.polynomial.moments.eq.step.3.decomposition.m.K} is $\oo\big((\E[\mathcal{N}])^2\big)$ if we fix $\eta > 0$ small enough with respect to $\theta$ and $\gamma$.
        Similarly, by \eqref{eqn: prob one point} and the estimate \eqref{eqn: lb one point}, the integral on $B_{K-2}$ is
        \begin{equation}
            \leq  \int_{B_{K-2}} \ \P(A(h)) \, \rd h\rd h' \ll (\log T)^{- 1 + 2/K + \eta/3}\cdot \E[\mathcal{N}] = \oo\big((\E[\mathcal{N}])^2\big),
        \end{equation}
        provided that $\eta$ is small enough with respect to $\theta$ and $\gamma$, and $K$ is large enough with respect to $\theta$, $\gamma$ and $\eta$.
        This concludes the proof of Proposition~\ref{prop:lower.bound.Dirichlet.polynomial.moments}.
    \end{proof}

    Putting all the work of Section 3 together, we can prove the lower bound in Theorem~\ref{thm: freezing}.

    \begin{proof}[Proof of Proposition~\ref{prop:lower.bound.moments.zeta}]
        By Proposition~\ref{prop: LB}, the probability in \eqref{eq:prop:lower.bound.moments.zeta} is
        \begin{equation}\label{eq:prop:lower.bound.moments.zeta.eq.1}
            \geq \P\Big(\int_{- \frac 13 \log^{\theta} T}^{\frac 13 \log^{\theta} T} |\zeta(\sigma_0 + \ii \tau + \ii h)|^{\beta} \rd h > (\log T)^{f_{\theta}(\beta) - \e}\Big) - \oo(1).
        \end{equation}
        By Lemma~\ref{lem:mollification} and Lemma~\ref{lem:Lemma.4.5.ABBRS.2018.analog}, the above is
        \begin{equation}\label{eq:prop:lower.bound.moments.zeta.eq.2}
            \geq \P\Big(\int_{- \frac 13 \log^{\theta} T}^{\frac 13 \log^{\theta} T} \exp\big(\beta \, \Re \widetilde{\mathcal{P}}_{1-K^{-1}}(\sigma_0 + \ii \tau + \ii h)\big) \rd h > (\log T)^{f_{\theta}(\beta) - 2\e}\Big) - \oo(1).
        \end{equation}
        Now, notice that the (double) sum for $k\geq 3$ in $\widetilde{\mathcal{P}}_{1-K^{-1}}(\sigma_0 + \ii \tau + \ii h)$ is of order one (uniformly for $|h| \leq \log^{\theta} T$), and that the sum for $k=2$ is of negligible order:
        \begin{equation}\label{eqn: power neglect}
            \P\bigg(\max_{|h| \leq \log^\theta T} \Big|\sum_{p\leq X} \tfrac{1}{2} \, p^{-2(\sigma_0+\ii\tau+\ii h)}\Big| > A\bigg)
            \ll A^{-2\ell}(\log^{1+\theta} T )\cdot \ell \hspace{0.3mm}! \, ,
        \end{equation}
        where we use the discretization from Proposition~\ref{prop:union} and the moment estimates from Lemma~\ref{lem:moment.estimates.Lemma.3.Sound.2009.generalisation}.
        Indeed, the right-hand side of \eqref{eqn: power neglect} is $\oo(1)$ with the choice $A = \sqrt{\nu_{\theta}}$ and $\ell = \lfloor (1+\theta)\log\log T \rfloor$.
        Hence, $\widetilde{\mathcal{P}}_{1-K^{-1}}$ can be replaced by $\mathcal{P}_{1-K^{-1}}$ with an error less than $\log^\e T$ with probability $1 - \oo(1)$, meaning that the right-hand side of \eqref{eq:prop:lower.bound.moments.zeta.eq.2} is
        \begin{equation}\label{eq:prop:lower.bound.moments.zeta.eq.3}
            \geq \P\Big(\int_{- \frac 13 \log^{\theta} T}^{\frac 13 \log^{\theta} T} \exp\big(\beta \, \Re \mathcal{P}_{1-K^{-1}}(\sigma_0 + \ii \tau + \ii h)\big) \rd h > (\log T)^{f_{\theta}(\beta) - 3\e}\Big) - \oo(1).
        \end{equation}
        By \eqref{eqn: max DP}, we may discard the terms with $j = 0$ and $j = K-2$ with a similar error.
        For $K$ large enough with respect to $\e$, $\beta$ and $\theta$, the probability in \eqref{eq:prop:lower.bound.moments.zeta.eq.3} is therefore
        \begin{equation}\label{eq:prop:lower.bound.moments.zeta.eq.4}
            \geq \P\Big(\int_{ - \frac 13 \log^{\theta} T}^{\frac 13 \log^{\theta} T} \exp\big(\beta \sum_{j=1}^{K-3} P_j(h)\big) \rd h > (\log T)^{f_{\theta}(\beta) - 4\e}\Big) - \oo(1).
        \end{equation}
        Finally, the probability in \eqref{eq:prop:lower.bound.moments.zeta.eq.4} tends to $1$ as $T\to\infty$ by Proposition~\ref{prop:lower.bound.Dirichlet.polynomial.moments}.
    \end{proof}

    We now prove the lower bound in Theorem~\ref{thm: max}.

    \begin{proof}[Proof of Proposition~\ref{prop:lower.bound.maximum.zeta}]
        From \eqref{eq:def:free.energy}, we have that $f_{\theta}(\beta) = \beta m(\theta)- 1$ when $\beta > \beta_c(\theta) = 2\sqrt{1 + (\theta \wedge 0)}$.
        Thus, on the event in the statement of Proposition~\ref{prop:lower.bound.moments.zeta} (which has probability $1 - \oo(1)$), and for $\beta$ large enough with respect to $\e$ and $\theta$, we have
        \vspace{-2mm}
        \begin{equation}\label{eq:prop:lower.bound.zeta.maximum.eq.1}
            \begin{aligned}
                \max_{|h| \leq \log^{\theta} T} |\zeta(\tfrac 12 + \ii \tau + \ii h)|
                &\geq \left(\frac{1}{2 \log^{\theta} T} \int_{-\log^{\theta} T}^{\log^{\theta} T} |\zeta(\tfrac 12 + \ii \tau + \ii h)|^{\beta} \rd h\right)^{\hspace{-0.5mm}1/\beta} \\[1.5mm]
                &\gg (\log T)^{m(\theta)- \frac{(1 + \e + \theta)}{\beta}} \\[1mm]
                &\geq (\log T)^{m(\theta)- \e}.
            \end{aligned}
        \end{equation}
        This ends the proof.
    \end{proof}

\appendix

\section{Useful estimates}

The prime number theorem yields estimates on the sum of primes with a good error.

\begin{lemma}\label{lem:PNT.estimates}
    Let $1 \leq P \leq Q$, then
    \begin{equation}\label{eq:lem:PNT.estimates.1}
        \sum_{P < p \leq Q} \frac{(\log p)^m}{p} =
        \begin{cases}
            \frac{(\log Q)^m}{m} - \frac{(\log P)^m}{m} + \OO_m(1), &\mbox{if } m \geq 1, \\[1.5mm]
            \log \log Q - \log \log P + \OO(e^{-c\sqrt{\log P}}), &\mbox{if } m = 0.
        \end{cases}
    \end{equation}
    Also, for $|\eta \log Q| \leq 1$,
    \begin{equation}\label{eq:lem:PNT.estimates.3}
        \sum_{P < p \leq Q} \frac{\cos(\eta \log p)}{p} = \log \log Q - \log \log P + \OO(1).
    \end{equation}
\end{lemma}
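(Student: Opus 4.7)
My plan is to deduce all three estimates from a single input, namely the prime number theorem with classical error term $\theta(x) \leqdef \sum_{p \leq x} \log p = x + \OO(x e^{-c\sqrt{\log x}})$, combined with Abel summation. Let $E(x) \leqdef \theta(x) - x$, so $E(x) \ll x e^{-c\sqrt{\log x}}$, and write $f_m(t) = (\log t)^{m-1}/t$ for $m \geq 1$. Partial summation yields
\begin{equation*}
\sum_{P < p \leq Q} \frac{(\log p)^m}{p} = \int_P^Q f_m(t) \, \rd\theta(t) = \int_P^Q f_m(t)\, \rd t + \big[f_m(t) E(t)\big]_P^Q - \int_P^Q f_m'(t) E(t)\, \rd t.
\end{equation*}
The first integral is $\frac{(\log Q)^m - (\log P)^m}{m}$. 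The boundary terms are $\OO\big((\log x)^{m-1} e^{-c\sqrt{\log x}}\big) = \OO_m(1)$ at both endpoints, and the tail integral is $\OO_m(1)$ since the integrand is $\OO_m\big((\log t)^{m-1} t^{-1} e^{-c\sqrt{\log t}}\big)$, whose antiderivative is bounded by a constant depending only on $m$. This proves the case $m \geq 1$.

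For $m = 0$, the same manipulation with $g(t) = 1/(t \log t)$ gives $\sum_{P<p\leq Q} \frac{1}{p} = \int_P^Q \frac{\rd t}{t \log t} + (\text{error})$, where the main term is $\log\log Q - \log\log P$. The error now has the boundary contribution $g(P) E(P) = \OO(e^{-c\sqrt{\log P}}/\log P)$ and the integral contribution $\int_P^\infty \frac{E(t)}{t^2 \log t}(1 + \oo(1)) \, \rd t$, which is $\OO(e^{-c'\sqrt{\log P}})$ by splitting at a convenient threshold or by observing that the integrand is monotonically decreasing in $t$ up to logarithmic factors. This yields the second line.

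For the cosine estimate I would use the elementary bound $|\cos x - 1| \leq x^2/2$ valid for all $x \in \R$, which gives
\begin{equation*}
\sum_{P < p \leq Q} \frac{\cos(\eta \log p)}{p} = \sum_{P < p \leq Q} \frac{1}{p} + \OO\bigg( \eta^2 \sum_{P < p \leq Q} \frac{(\log p)^2}{p} \bigg).
\end{equation*}
The first sum is $\log\log Q - \log\log P + \OO(1)$ by the $m=0$ case, and the second is $\OO((\log Q)^2)$ by the $m=2$ case. Under the hypothesis $|\eta \log Q| \leq 1$, the error term is $\OO(1)$, concluding the proof.

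The only mildly delicate step is the error management in the $m = 0$ case, where one must make sure the small-prime contribution is controlled by $e^{-c\sqrt{\log P}}$ rather than a constant; this is handled purely by the quality of the PNT error term and poses no real obstacle. The remaining computations are routine.
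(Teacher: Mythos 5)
Your proof is correct, but it is worth noting that the paper does not actually prove this lemma at all: its ``proof'' consists of citations (Lemma A.1 of Arguin--Belius--Harper, Lemma 2.1 of Arguin--Belius--Bourgade--et al., and p.~20 of Harper's note for the cosine sum). You instead give a self-contained derivation from the prime number theorem $\theta(x)=x+\OO(xe^{-c\sqrt{\log x}})$ via Riemann--Stieltjes partial summation, and all three estimates check out: the identity $\sum_{P<p\leq Q}(\log p)^m/p=\int_P^Q f_m\,\rd\theta$ with $f_m(t)=(\log t)^{m-1}/t$ is right, the boundary and tail error terms are $\OO_m(1)$ (resp.\ $\OO(e^{-c'\sqrt{\log P}})$ for $m=0$, since $\int_{\log P}^{\infty}u^{-1}e^{-c\sqrt{u}}\,\rd u\ll e^{-c\sqrt{\log P}}$ after substituting $v=\sqrt{u}$), and the reduction of the cosine sum to the $m=0$ and $m=2$ cases via $|\cos x-1|\leq x^2/2$ together with $|\eta\log Q|\leq 1$ is exactly the standard argument. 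What your approach buys is transparency and uniformity: one input (PNT with classical error) yields all three statements, and it makes clear that the $e^{-c\sqrt{\log P}}$ error in the $m=0$ case is inherited directly from the zero-free region, which the citation-only proof obscures. Two cosmetic caveats you may wish to flag: the $m=0$ statement only makes sense for $P$ bounded away from $1$ (otherwise $\log\log P$ and $g(t)=1/(t\log t)$ are singular), and for $1\leq P<e$ the claimed error $\OO(e^{-c\sqrt{\log P}})$ degenerates to $\OO(1)$ anyway; neither affects how the lemma is used in the paper.
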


\begin{proof}
    For \eqref{eq:lem:PNT.estimates.1}, see Lemma A.1 in \cite{MR3906393} and Lemma 2.1 in \cite{MR3619786}.
    For \eqref{eq:lem:PNT.estimates.3}, see p.20 in \cite{harper_note_13}.
\end{proof}

The next three results yield moment estimates for Dirichlet polynomials.
The first one is an elementary bound. The second ensures that moments of Dirichlet polynomials that are not too high are approximately Gaussian.

\begin{lemma}[Lemma 3.3 in \cite{ABBRS_2019}]\label{lem:ABBRS.2018.Lemma.3.3}
    For any complex numbers $a(n)$ and $b(n)$, and for $N \leq T$, we have
    \begin{equation}
        \begin{aligned}
            &\E\bigg[\Big(\sum_{m\leq N} a(m) m^{-\ii \tau}\Big) \Big(\sum_{n\leq N} b(n) n^{\ii \tau}\Big)\bigg] \\
            &\quad= \sum_{n\leq N} a(n)b(n) + \OO\bigg(\frac{N \log N}{T} \sum_{n\leq N} (|a(n)|^2 + |b(n)|^2)\bigg).
        \end{aligned}
    \end{equation}
\end{lemma}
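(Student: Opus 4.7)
The plan is to expand the product, take the expectation termwise, and separate the diagonal $m=n$ contribution from the off-diagonal oscillatory contribution. Writing
\begin{equation*}
    \Big(\sum_{m\leq N} a(m) m^{-\ii \tau}\Big) \Big(\sum_{n\leq N} b(n) n^{\ii \tau}\Big) = \sum_{m,n \leq N} a(m)b(n) \Big(\frac{n}{m}\Big)^{\ii \tau},
\end{equation*}
the diagonal $m=n$ contributes exactly $\sum_{n \leq N} a(n) b(n)$. For the off-diagonal terms, an elementary computation gives
\begin{equation*}
    \E\bigg[\Big(\frac{n}{m}\Big)^{\ii \tau}\bigg] = \frac{1}{T}\int_T^{2T} e^{\ii \tau \log(n/m)} \rd \tau \ll \frac{1}{T \, |\log(n/m)|}, \quad m \neq n.
\end{equation*}

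Next, I would bound the total off-diagonal contribution by AM--GM, writing $|a(m) b(n)| \leq \tfrac{1}{2}(|a(m)|^2 + |b(n)|^2)$, which reduces matters to showing
\begin{equation*}
    \max_{m \leq N} \, \sum_{\substack{n \leq N \\ n \neq m}} \frac{1}{|\log(n/m)|} \ll N \log N.
\end{equation*}
To establish this, I would split according to the size of $n$ relative to $m$: for $n \leq m/2$ or $n \geq 2m$, the factor $|\log(n/m)|$ is bounded below by an absolute constant, so these terms contribute $\OO(N)$. For $m/2 < n < 2m$ with $n \neq m$, one has $|\log(n/m)| \gg |n-m|/m$, so the contribution is
\begin{equation*}
    \ll \sum_{1 \leq |k| \leq m} \frac{m}{|k|} \ll m \log m \ll N \log N.
\end{equation*}
Combining the two halves, the uniform bound follows.

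Putting the pieces together, the off-diagonal contribution is
\begin{equation*}
    \ll \frac{1}{T} \sum_{\substack{m,n \leq N \\ m \neq n}} \frac{|a(m)|^2 + |b(n)|^2}{|\log(n/m)|} \ll \frac{N \log N}{T} \sum_{n \leq N} \big(|a(n)|^2 + |b(n)|^2\big),
\end{equation*}
which gives the stated error term. The main subtlety is the diagonal/near-diagonal bookkeeping: one has to be careful that the singularity of $1/|\log(n/m)|$ as $n \to m$ is integrable in the discrete sense and only costs an extra $\log N$, which is precisely why the error term has the factor $N \log N / T$ rather than merely $N/T$. Every other step is a short direct calculation.
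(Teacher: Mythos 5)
Your proof is correct and is the standard argument for this mean-value estimate: the paper simply cites Lemma 3.3 of \cite{ABBRS_2019}, whose proof is exactly this diagonal/off-diagonal expansion with the bound $\E[(n/m)^{\ii\tau}]\ll (T|\log(n/m)|)^{-1}$, AM--GM, and the elementary estimate $\sum_{n\neq m}|\log(n/m)|^{-1}\ll N\log N$. Nothing further is needed.
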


\begin{lemma}[Lemma 3.4 in  \cite{ABBRS_2019}]\label{lem: moments}
    Let $x\ge 2$ be a real number, and suppose that for primes $p\le x$, $a(p)$ is a complex number with $|a(p)|\leq 1$.
    Then, for any $k\in \N$,
    \begin{equation}
        \begin{aligned}
            &\E\bigg[\Big(\frac{1}{2}\sum_{ p \le x} (a(p)p^{-\ii \tau}+ \overline{a(p)}p^{\ii \tau})\Big)^{k}\bigg] = \frac{\partial^k}{\partial z^k}\Big(\prod_{p\le x} I_0(|a(p)|z)\Big)\Big|_{z=0} + \OO\Big(\frac{x^{2k}}{T}\Big),
        \end{aligned}
    \end{equation}
    where $I_0(z) = \sum_{n\geq 0}z^{2n}/(2^{2n}(n!)^2)$ denotes the modified Bessel function of the first kind of order $0$.
    In particular, the expression is $\OO\left(x^{2k}/T\right)$ for odd $k$.
\end{lemma}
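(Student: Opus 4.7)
The plan is to expand the $k$-th power by the multinomial formula. Writing $S = \tfrac{1}{2}\sum_{p \leq x}(a(p) p^{-\ii\tau} + a^\star(p) p^{\ii\tau})$, the expansion of $S^k$ is a sum of $(2\pi(x))^k$ terms, each of the form
\[
    \frac{1}{2^k}\Big(\prod_{i=1}^{j} a(p_i)\Big)\Big(\prod_{\ell=1}^{k-j} a^\star(q_\ell)\Big) \, (M_1/M_2)^{-\ii\tau},
\]
where $j$ is the number of factors of the first kind chosen, $M_1 = p_1\cdots p_j$ and $M_2 = q_1\cdots q_{k-j}$. Taking expectations reduces everything to evaluating $\E[(M_1/M_2)^{-\ii\tau}]$, which equals $1$ when $M_1 = M_2$ and otherwise equals $\OO(1/(T|\log(M_1/M_2)|))$.

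I would first dispatch the off-diagonal terms. Since $M_1, M_2$ are distinct positive integers bounded by $x^k$, the elementary lower bound $|\log(M_1/M_2)| \gg x^{-k}$ gives each off-diagonal expectation bounded by $\OO(x^k/T)$. The total number of such terms is at most $(2\pi(x))^k \leq (2x)^k$, and every coefficient is bounded in modulus by $1$, so after combining with the factor $1/2^k$ the whole off-diagonal contribution is $\OO(x^{2k}/T)$, matching the advertised error.

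For the diagonal, unique factorization forces the multiset $\{p_1,\ldots,p_j\}$ to equal the multiset $\{q_1,\ldots,q_{k-j}\}$ as soon as $M_1=M_2$; in particular $j = k-j$, so the diagonal vanishes unless $k$ is even. Parametrizing a diagonal contribution by the common multiset $\{p^{n_p}\}_{p}$ with $\sum_p n_p = k/2$, and counting the orderings via $(k/2)!/\prod_p n_p!$ on each side and the outer multinomial $\binom{k}{k/2}$, yields
\[
    \text{diagonal}
    = \frac{k!}{2^k}\sum_{\sum_p n_p = k/2} \prod_p \frac{(a(p)a^\star(p))^{n_p}}{(n_p!)^2}.
\]

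It then remains to identify this expression with $\frac{\partial^k}{\partial z^k}\prod_{p\leq x} I_0(\sqrt{a(p)a^\star(p)}\,z)\big|_{z=0}$. Using the series $I_0(\sqrt{\alpha}\,z) = \sum_{n\geq 0} \alpha^n z^{2n}/(4^n (n!)^2)$ and multiplying over $p$, the coefficient of $z^k$ in the product (for $k$ even, $k = 2N$) is exactly $4^{-N}\sum_{\sum n_p = N}\prod_p (a(p)a^\star(p))^{n_p}/(n_p!)^2$, and multiplying by $k!$ produces precisely the diagonal formula above (noting $4^{k/2}=2^k$); for $k$ odd there is no $z^k$ term. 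This matches the main term and completes the proof. The only step that requires any care is the bookkeeping of the multiplicities in the diagonal sum, but this is a direct multinomial calculation rather than a genuine obstacle.
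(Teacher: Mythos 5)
Your argument is correct and complete: the multinomial expansion, the off-diagonal bound via $\E[(M_1/M_2)^{-\ii\tau}]\ll (T|\log(M_1/M_2)|)^{-1}$ together with $|\log(M_1/M_2)|\gg x^{-k}$, and the identification of the diagonal multiplicity $k!/\prod_p(n_p!)^2$ with the $z^k$-coefficient of $\prod_p I_0(\sqrt{a(p)a^\star(p)}\,z)$ all check out, including the vanishing for odd $k$. The paper itself does not prove this lemma but cites it from \cite{ABBRS_2019}, and your proof is essentially the standard argument given there (orthogonality of $n^{-\ii\tau}$ on $[T,2T]$ plus multinomial bookkeeping), so nothing further is needed.
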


The relation with Gaussian moments in the case where $a(p) = p^{-\sigma - \ii h}$ is obtained by expanding the product to get
\begin{equation}
    \prod_{p\le x} I_0(|a(p)|z) = F(z)\cdot\exp\bigg(\frac{z^2}{2} \cdot \frac{1}{2} \sum_{p\leq x} p^{-2\sigma}\bigg)
\end{equation}
where $F(z)$ is analytic in a neighborhood of $0$ with $F(0)=1$ and any derivative of a fixed order is bounded by $\sum_{p\leq x}p^{-4\sigma}$ uniformly in $z$.
In particular, this implies that, for $\sigma\geq 1/2$ and $k$ small enough so that $x^{2k}/T=\oo(1)$,
\begin{equation}\label{eqn: estimate gaussian moment}
    \E\bigg[\Big(\sum_{p \le x} \Re p^{-\sigma - \ii \tau - \ii h}\Big)^{2k}\bigg]
    = (1+\oo(1)) \, \frac{(2k)!}{2^k \cdot k!} \, \bigg(\frac{1}{2} \sum_{p\leq x}p^{-2\sigma}\bigg)^k.
\end{equation}
The above also holds if $a(p)=0$ for $p\leq y$ (say) with the sum over primes restricted to $y< p\leq x$.
In particular, the error $\sum_{y<p\leq x}p^{-4\sigma}$ can be made $\oo(1)$ by taking $y$ large.
We note that the moments yield a Gaussian tail
\begin{equation}\label{eqn: gaussian tail}
    \P\Big(\sum_{p \le x} \Re p^{-\sigma - \ii \tau - \ii h} > V\Big)\ll \exp(-V^2 /(2 v^2)),
\end{equation}
by picking the moment $k=\lfloor V^2 / 2 v^2 \rfloor$ with $v^2=\frac{1}{2} \sum_{p\leq x}p^{-2\sigma}$, for $V$ not too large.

\vspace{3mm}
Finally the third estimate is a cruder version of the Gaussian moment estimates that yields quick upper bounds on moments.

\begin{lemma}[Lemma 3 in \cite{Sound09}]\label{lem:moment.estimates.Lemma.3.Sound.2009.generalisation}
    Let $T$ be large, and let $2 \leq x \leq T$.
    Let $\ell$ be a natural number such that $x^{\ell} \ll T / \log T$.
    For any complex numbers $a(p)$, we have
    \vspace{-2mm}
    \begin{equation}\label{eq:lem:moment.estimates.Lemma.3.Sound.2009.generalisation}
        \E\bigg[\Big|\sum_{p \leq x} \frac{a(p)}{p^{1/2 + \ii \tau}}\Big|^{2\ell}\bigg] \ll \ell! \, \bigg(\sum_{p \leq x} \frac{|a(p)|^2}{p}\bigg)^\ell.
    \end{equation}
\end{lemma}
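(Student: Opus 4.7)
The plan is to expand $|D(1/2+\ii\tau)|^{2k}$ as a Dirichlet polynomial and apply the mean-value formula (Lemma \ref{lem:ABBRS.2018.Lemma.3.3}). Write $D(s) = \sum_{p \leq x} a(p) p^{-s}$. By the multinomial theorem,
\begin{equation*}
    D(s)^k = \sum_{\substack{n : \Omega(n) = k \\ p \mid n \implies p \leq x}} d_k(n)\, f_a(n)\, n^{-s},
\end{equation*}
where, if $n = \prod_p p^{e_p}$, we set $d_k(n) = k! / \prod_p e_p!$ and $f_a(n) = \prod_p a(p)^{e_p}$. Putting $A_n = d_k(n) f_a(n) n^{-1/2}$, the product $|D(1/2+\ii\tau)|^{2k}$ equals $(\sum_n A_n n^{-\ii\tau})(\sum_n \overline{A_n} n^{\ii\tau})$, supported on $n \leq x^k$.

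Next I would apply Lemma \ref{lem:ABBRS.2018.Lemma.3.3} with the sequences $A_n$ and $\overline{A_n}$ and length $N \leq x^k$. This yields
\begin{equation*}
    \E\big[|D(\tfrac 12 + \ii \tau)|^{2k}\big] = \sum_n |A_n|^2 + \OO\bigg(\frac{x^k \log (x^k)}{T} \sum_n |A_n|^2\bigg).
\end{equation*}
The hypothesis $x^k \ll T/\log T$ forces $x^k \log(x^k)/T \ll 1$, so the error term is absorbed into the main term and it remains to show
\begin{equation*}
    \sum_n |A_n|^2 \;=\; \sum_n \frac{d_k(n)^2 |f_a(n)|^2}{n} \;\leq\; k!\,\bigg(\sum_{p\leq x}\frac{|a(p)|^2}{p}\bigg)^{\!k}.
\end{equation*}

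The key combinatorial step is the pointwise bound $d_k(n) = k!/\prod_p e_p! \leq k!$, which gives $d_k(n)^2 \leq k! \cdot d_k(n)$. Substituting and invoking the multinomial theorem in the opposite direction,
\begin{equation*}
    \sum_n \frac{d_k(n)^2 |f_a(n)|^2}{n} \;\leq\; k! \sum_n \frac{d_k(n) |f_a(n)|^2}{n} \;=\; k! \bigg(\sum_{p\leq x} \frac{|a(p)|^2}{p}\bigg)^{\!k},
\end{equation*}
which is exactly the required bound. None of the steps presents a real obstacle: the only subtlety is checking that the length condition $x^k \ll T/\log T$ is precisely what is needed for the Lemma \ref{lem:ABBRS.2018.Lemma.3.3} error to be at most a constant multiple of the main term, and that the elementary inequality $d_k(n)^2 \leq k!\, d_k(n)$ converts the squared multinomial coefficient into a single one, matching the factor $k!$ in the target bound.
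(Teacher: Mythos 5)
Your proof is correct and is essentially the argument behind the cited result: the paper itself only refers to Lemma 3 of \cite{Sound09}, whose proof is exactly this multinomial expansion, a mean value theorem for Dirichlet polynomials (you substitute Lemma \ref{lem:ABBRS.2018.Lemma.3.3}, with $x^k \ll T/\log T$ making the off-diagonal error comparable to the diagonal), and the bound $d_k(n)^2 \leq k!\,d_k(n)$ to resum the diagonal. No gaps.
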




%
%

\bibliographystyle{imsart-nameyear}
\bibliography{Arguin_Ouimet_Radziwill_2021_freezing_bib}

\end{document}